\newcommand{\norm}[1]{\left\| #1 \right\|}  
\newcommand{\scprd}[1]{\left\langle #1 \right\rangle}  
\renewcommand{\d}{\,\mathrm{d}} 
\newcommand{\e}{\mathrm{e}} 
\newcommand{\N}{\mathbb{N}}  
\newcommand{\R}{\mathbb{R}}
\newcommand{\eps}{\varepsilon}
\renewcommand{\phi}{\varphi}
\newcommand{\ul}{\underline}
\newcommand{\ol}{\overline}
\numberwithin{equation}{section}
\newtheorem{thm}{Theorem}[section]
\newtheorem{cor}[thm]{Corollary}
\newtheorem{lm}[thm]{Lemma}
\newtheorem{cond}[thm]{Condition}
\theoremstyle{definition} \newtheorem{ex}[thm]{Example}
\theoremstyle{definition}
\title{Stabilization of port-Hamiltonian systems by nonlinear boundary control in the presence of disturbances}
\author{Jochen Schmid$^1$ and Hans Zwart$^{2,3}$\\  
\small $^1$ Institut f\"ur Mathematik, Universit\"at W\"urzburg, 97074 W\"urzburg, Germany\\
\small $^2$ Department of Applied Mathematics, University of Twente, 7500 AE Enschede 
\\
\small $^3$ Department of Mechanical  Engineering, Technische Universiteit Eindhoven, \\
\small 5600 MB Eindhoven, The Netherlands \\
\small jochen.schmid@mathematik.uni-wuerzburg.de, h.j.zwart@utwente.nl}
\date{}
\begin{document}

\maketitle

\begin{abstract}
\small{ \noindent 
In this paper, we are concerned with the stabilization of linear port-Hamiltonian systems of arbitrary order $N \in \N$ on a bounded $1$-dimensional spatial domain $(a,b)$. 
In order to achieve stabilization, we couple the system 
to a dynamic boundary controller, that is, a controller that acts on the system  only via the boundary points $a,b$ of the spatial domain. We use a  nonlinear controller in order to capture the nonlinear behavior that realistic actuators often exhibit 
and, moreover, we allow the output of the controller to be corrupted by actuator disturbances 
before it is fed back into the system. 
What we show here is that the resulting nonlinear closed-loop system 
is input-to-state stable w.r.t.~square-integrable disturbance inputs. 
In particular, we obtain uniform input-to-state stability for systems of order $N=1$ and a special class of nonlinear controllers, and weak input-to-state stability for systems of arbitrary order $N \in \N$ and a more 
general class of nonlinear controllers. 
Also, in both cases, we obtain convergence to $0$ of all solutions as $t \to \infty$.
Applications are given to vibrating strings and beams.  
}
\end{abstract}

{ \small \noindent 
Index terms:  Input-to-state stability, infinite-dimensional systems, port-Hamiltonian systems, nonlinear boundary control, actuator disturbances
}

\section{Introduction}


In this paper, we consider linear port-Hamiltonian systems of arbitrary order $N \in \N$ on a bounded $1$-dimensional spatial domain $(a,b)$. Such systems are described by a linear partial differential equation of the form
\begin{align}
\partial_t x(t,\zeta) = P_N \partial_{\zeta}^N( \mathcal{H}(\zeta) x(t,\zeta) ) + \dotsb + P_1 \partial_{\zeta}( \mathcal{H}(\zeta) x(t,\zeta) ) + P_0 \mathcal{H}(\zeta) x(t,\zeta) 
\end{align}
for $t \in [0,\infty), \zeta \in (a,b)$, 
and the energy of such a system in the state $x(t,\cdot)$ is given by
\begin{align}
E(x(t,\cdot)) = \frac{1}{2} \int_a^b x(t,\zeta)^{\top} \mathcal{H}(\zeta) x(t,\zeta) \d \zeta,
\end{align}
where $x(t,\zeta) \in \R^m$, 
$\mathcal{H}(\zeta) \in \R^{m\times m}$ is the energy density at $\zeta$, and $P_0, \dots, P_N \in \R^{m \times m}$ are alternately skew-symmetric and symmetric matrices and $P_N$ is invertible. 
Simple examples of such systems are given by vibrating strings or beams. Also, many of the systems of linear conservation laws considered in~\cite{BaCo}, namely those of the special form
\begin{align} \label{eq:system of lin conservation laws}
\partial_t x(t,\zeta) = P \partial_{\zeta} x(t,\zeta) 
\qquad (t \in [0,\infty), \zeta \in (a,b))
\end{align}
with a $\zeta$-independent invertible matrix $P$, fall in the above class of linear port-Hamiltonian systems. 
%
What we are interested in here is the stabilization of such a system $\mathfrak{S}$ by means of dynamic boundary control, that is, by coupling the system to a dynamic controller $\mathfrak{S}_c$ that acts on the system  only via the boundary points $a,b$ of the spatial domain $(a,b)$. 
Since realistic controllers often exhibit nonlinear behavior (due to nonquadratic potential energy or nonlinear damping terms, for instance), we want to work with nonlinear controllers -- just like~\cite{Au15}, \cite{MiSt16}, \cite{ZwRaLe16}, \cite{RaZwLe17}. 
Since, moreover, realistic controllers are typically affected by external disturbances, we -- unlike ~\cite{Au15}, \cite{MiSt16}, \cite{ZwRaLe16}, \cite{RaZwLe17} -- 
also want to incorporate such actuator disturbances which corrupt the output of the controller before it is fed back into the system.  
%
Coupling such a controller to the system $\mathfrak{S}$ by standard feedback interconnection
\begin{align}
y(t) = u_c(t) \qquad \text{and} \qquad -y_c(t)+d(t) = u(t)
\end{align}
(with $u,y$ and $u_c,y_c$ being the in- and outputs of $\mathfrak{S}$ and $\mathfrak{S}_c$ respectively and with $d$ being the disturbance), we obtain a nonlinear 
closed-loop system $\tilde{\mathfrak{S}}$ with input $d$ and output $y$. Schematically, this closed-loop system can be depicted as in the following figure.

\begin{figure}[h!]
\centering
\begin{tikzpicture}[scale =1]
\tikzstyle{tf} = [rectangle, draw = black, minimum width= 1.618cm, minimum height = 1cm]
\tikzstyle{sum} = [draw=black, shape=circle, minimum width = .5, minimum height = .5]

\node(BC) [tf,scale=1, minimum width = 1cm] at (4,4){System $\mathfrak{S}$};
\node(NL) [tf,scale=1, minimum width = 1cm] at (4,2){Controller $\mathfrak{S}_c$};
\node(s1) [sum, scale = .7, minimum width = .5] at (2,4) {$+$};
\node(m) [ right] at (2,3.5){$-$};

\node(d) [above] at (1.2,4) {$d$};
\node(u) [signal, above] at (2.6,4) {$u$};
\node(y) [above left] at (6,4){} ;
\node(o)[signal, above] at (5.6,4) {$y$};
\node(uc) [above left] at (6,2) {$u_c$};
\node(yc) [above right] at (2,2) {$y_c$};

\draw [->] (0.6,4)--(s1);
\draw [->] (s1)--(BC);
\draw [->] (BC)--(7,4);
\draw [->] (y.south east) -- (uc.south east) -- (NL);
\draw [->] (NL.west) -| (s1.south); 
\end{tikzpicture}
\caption{Closed-loop system $\tilde{\mathfrak{S}}$ \label{fig:CL}}
\end{figure}
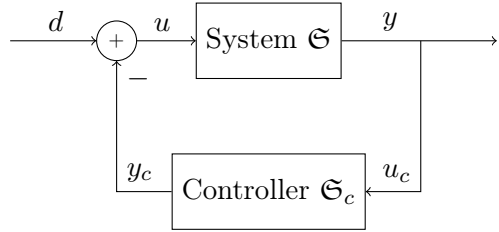


What we do in this paper, is to establish input-to-state stability for the closed-loop system $\tilde{\mathfrak{S}}$ w.r.t.~square-integrable disturbance inputs $d$. 
%
In rough terms, this means that $\tilde{x}_* := 0$ is an asymptotically stable equilibrium point of the undisturbed closed-loop system with $d = 0$ and that this stability property 
is robust w.r.t.~disturbances $d \ne 0$ in the sense that small disturbances affect the asymptotic stability 
only slightly (see~\eqref{eq:ISS-informal def, stab estimate} and~\eqref{eq:ISS-informal def, attractivity  estimate} below). 
In more precise terms, the input-to-state stability of 
$\tilde{\mathfrak{S}}$ w.r.t.~square-integrable disturbance inputs $d$ means 
(i) that $\tilde{\mathfrak{S}}$, for every initial state $\tilde{x}_0$ and every square-integrable disturbance $d$, has a unique global (generalized) solution $\tilde{x}(\cdot,\tilde{x}_0,d)$ and 
(ii) that for all these (generalized) solutions, the following perturbed stability and attractivity estimates hold true: 
\begin{gather}
\norm{ \tilde{x}(t,\tilde{x}_0,d) } \le \ul{\sigma}(\norm{\tilde{x}_0}) + \ul{\gamma}(\norm{d}_2) 
\qquad (t \in [0,\infty)) \label{eq:ISS-informal def, stab estimate}
\\
\text{and} \notag \\
\limsup_{t\to \infty} \norm{ \tilde{x}(t,\tilde{x}_0,d) }  \le \ol{\gamma}(\norm{d}_2),  \label{eq:ISS-informal def, attractivity  estimate}
\end{gather}
where $\ul{\sigma}, \ul{\gamma}, \ol{\gamma}$ are monotonically increasing comparison functions that are zero at $0$. 
According to whether the limit relation~\eqref{eq:ISS-informal def, attractivity  estimate} holds (locally) uniformly or just pointwise  w.r.t.~$\tilde{x}_0$ and $d$,
one speaks of uniform or weak input-to-state stability, respectively. Also, instead of uniform input-to-state stability one usually just speaks of input-to-state stability. 
%
In this paper, we show (i) that for a system $\mathfrak{S}$ of order $N = 1$ and a special class of nonlinear controllers $\mathfrak{S}_c$, 
the resulting closed-loop system $\tilde{\mathfrak{S}}$ is uniformly input-to-state stable w.r.t.~square-integrable disturbances $d$, 
and (ii) that for a system $\mathfrak{S}$ of arbitrary order $N \in \N$ and a more general class of nonlinear controllers $\mathfrak{S}_c$, 
the resulting closed-loop system $\tilde{\mathfrak{S}}$ is weakly input-to-state stable w.r.t.~square-integrable disturbances $d$. In particular, we show in both cases that unique global (generalized) solutions $\tilde{x}(\cdot,\tilde{x}_0,d)$ exist for $\tilde{\mathfrak{S}}$. Additionally, we will see that in both cases 
every such solution converges to zero: 
\begin{align}
\tilde{x}(t,\tilde{x}_0,d) \longrightarrow 0 \qquad (t \to \infty)
\end{align}
for every initial state $\tilde{x}_0$ and every square-integrable disturbance $d$. 
%
In all these results, we have to impose only mild assumptions on the system $\mathfrak{S}$ to be stabilized, namely an impedance-passivity condition and an approximate observability condition. 
%
We finally apply both our uniform and our weak input-to-state stability result to vibrating strings and beams (modeled according to Timoshenko). 
\smallskip


In the literature, the stabilization of port-Hamiltonian systems has been considered so far, to the best of our knowledge, only in the case without actuator disturbances. 
%
In~\cite{ViZwLeMa09}, \cite{Au15}, \cite{MiSt16}, \cite{ZwRaLe16}, \cite{RaZwLe17}, no disturbances are considered at all, that is, the situation depicted above is considered in the special case where $d =0$. Stabilization of port-Hamiltonian systems of various degrees of generality is achieved by means of linear dynamic boundary controllers in~\cite{ViZwLeMa09} and by means of nonlinear dynamic boundary controllers in \cite{Au15}, \cite{MiSt16}, \cite{ZwRaLe16}, \cite{RaZwLe17}.  
%
In~\cite{TaPrTa17}, 
sensor -- instead of actuator -- disturbances are considered, that is,  disturbances do occur in~\cite{TaPrTa17} but they corrupt the input of the controller instead of its output. 
It is shown in~\cite{TaPrTa17} that for a port-Hamiltonian system of the special form~\eqref{eq:system of lin conservation laws} with negative definite $P$ and a linear dynamic boundary controller, the resulting closed-loop system is uniformly input-to-state stable w.r.t.~essentially bounded disturbance inputs $d$ (meaning that the perturbed stability and attractivity estimates~\eqref{eq:ISS-informal def, stab estimate} and~\eqref{eq:ISS-informal def, attractivity  estimate} are satisfied with the $2$-norm $\norm{d}_2$ replaced by the $\infty$-norm $\norm{d}_{\infty}$).  
\smallskip



We conclude the introduction with some remarks on the organization of the paper and on notational conventions used throughout the paper. 
%
Section~\ref{sect:setting} provides a detailed description of the setting with the precise assumptions on the system $\mathfrak{S}$ to be stabilized and the controller $\mathfrak{S}_c$ used for that purpose. In Section~\ref{sect:solvb} we prove the solvability of the closed-loop system -- first, in the classical sense for classical data $(\tilde{x}_0,d)$, and then in the generalized sense for general data $(\tilde{x}_0,d)$. In Section~\ref{sect:stab} we establish the main results of this paper, namely the uniform (Section~\ref{sect:ISS}) and the weak (Section~\ref{sect:wISS}) input-to-state stability of the closed-loop system. And finally, in Section~\ref{sect:applications} we present some applications of our general results. In order to get a quick overview of the core results in simplified form, the reader can  consult the conference paper~\cite{ScZw18-MTNS}.
\smallskip

%
In the entire paper, $|\cdot|$ denotes the standard norm on $\R^k$ for every $k \in \N$. As usual, $\mathcal{K}$, $\mathcal{K}_{\infty}$, $\mathcal{L}$ denote the following classes of comparison functions:
\begin{gather*}
\mathcal{K} := \{ \gamma \in C([0,\infty),[0,\infty)): \gamma \text{ strictly increasing with } \gamma(0) = 0 \}, \\
\mathcal{K}_{\infty} := \{ \gamma \in \mathcal{K}: \gamma \text{ unbounded} \}, \\
\mathcal{L} := \{ \gamma \in C([0,\infty),[0,\infty)): \gamma \text{ strictly decreasing with } \lim_{r\to\infty} \gamma(r) = 0 \}.
\end{gather*}
Also, $C_c^2([0,\infty),\R^k)$ denotes the subspace of $C^2([0,\infty),\R^k)$-functions $d$ with compact support in $[0,\infty)$, 
and for $d \in L^2([0,\infty),\R^k)$ we will use the following short-hand notations: 
\begin{align*}
\norm{d}_2 := \norm{d}_{L^2([0,\infty),\R^k)},
\qquad
\norm{d}_{[0,t],2} := \norm{d|_{[0,t]}}_{L^2([0,t],\R^k)}.
\end{align*}  
And finally, for a semigroup generator $A$ and bounded operators $B, C$ between appropriate spaces, the symbol $\mathfrak{S}(A,B,C)$ will stand for the state-linear system~\cite{CurtainZwart} 
\begin{align*}
x' = Ax + Bu \quad \text{with} \quad y = Cx,
\end{align*}
where the prime stands for derivative w.r.t.~time.

\section{Setting} \label{sect:setting}

\subsection{Setting: the system to be stabilized}

%
We consider a linear port-Hamiltonian system $\mathfrak{S}$ of order $N \in \N$ on a bounded  interval $(a,b)$ with control and observation at the boundary~\cite{JacobZwart},  \cite{DuMaStBr}, \cite{Augner}. 
Such a system evolves according to the following differential equation with boundary control and boundary observation conditions: 
\begin{gather}
x' = \mathcal{A}x 
=  P_N \partial_{\zeta}^N(\mathcal{H}x) + P_{N-1} \partial_{\zeta}^{N-1}(\mathcal{H}x) + \dotsb + P_1 \partial_{\zeta}(\mathcal{H}x) + P_0 \mathcal{H}x  \label{eq:S diff eq}\\
u(t) = \mathcal{B}x(t) \quad \text{and} \quad y(t) = \mathcal{C}x(t) \label{eq:S input/output eq}
\end{gather}
and the energy of such a system in the state $x$ is given by \begin{align}
E(x) = \frac{1}{2} \int_a^b x(\zeta)^{\top} (\mathcal{H}x)(\zeta)\d \zeta. 
\end{align}
In these equations, $\zeta \mapsto \mathcal{H}(\zeta) \in \R^{m\times m}$ is a measurable matrix-valued function (the 
energy density) such that for some positive constants $\ul{m}, \ol{m}$ and almost all $\zeta \in (a,b)$
\begin{align} \label{eq:H bd below and above}
0 < \ul{m}  \le \mathcal{H}(\zeta) \le \ol{m}  < \infty,
\end{align}
and $P_0,P_1, \dots, P_N \in \R^{m\times m}$ are matrices such that $P_N$ is invertible and $P_1,\dots, P_N$ are alternately symmetric and skew-symmetric while $P_0$ is dissipative: 
\begin{align*}
P_l^{\top} = (-1)^{l+1} P_l \qquad (l \in \{1,\dots,N\}) 
\qquad \text{and} \qquad P_0^{\top} + P_0 \le 0.
\end{align*} 
(Strictly speaking, such systems should be called port-Hamilitonian only if $P_0$ is skew-symmetric. If $P_0$ is only dissipative, they could more precisely be called port-dissipative systems.) 
%
%
%
As the state space of $\mathfrak{S}$ one chooses $X := L^2((a,b),\R^m)$ with norm $\norm{\cdot}_X$ given by the system energy
\begin{align*}
\frac{1}{2}\norm{x}_X^2 := E(x) = \frac{1}{2} \int_a^b x(\zeta)^{\top} (\mathcal{H}x)(\zeta)\d \zeta.
\end{align*}
It is clear by~\eqref{eq:H bd below and above} that the norm $\norm{\cdot}_X$ is equivalent to the standard norm of $L^2((a,b),\R^m)$ and that it 
is induced by a scalar product which we denote by $\scprd{\cdot,\cdot \cdot}_X$.
As the domain of the linear differential operator $\mathcal{A}: D(\mathcal{A}) \subset X \to X$ from~\eqref{eq:S diff eq} one chooses 
\begin{align}
D(\mathcal{A}) := \big\{ x \in X: \mathcal{H}x \in W^{N,2}((a,b),\R^m) \text{ and } W_{B,1} (\mathcal{H}x)|_{\partial} = 0 \big\}
\end{align}
where $W_{B,1} \in \R^{(mN-k) \times 2mN}$ with $k \in \{1,\dots,mN\}$ and where 
\begin{align*}
(\mathcal{H}x)|_{\partial} 
:= 
\begin{pmatrix}
(\mathcal{H}x)|_{\partial,b} \\ (\mathcal{H}x)|_{\partial,a}
\end{pmatrix},
\qquad 
(\mathcal{H}x)|_{\partial,\zeta} 
:=
\big( (\mathcal{H}x)(\zeta)^{\top}, \partial_{\zeta}(\mathcal{H}x)(\zeta)^{\top}, \dots, \partial_{\zeta}^{N-1} (\mathcal{H}x)(\zeta)^{\top} \big)^{\top}.
\end{align*}
In other words, the domain of $\mathcal{A}$ incorporates 
the zero boundary condition
\begin{align} \label{eq:S bdry cond}
W_{B,1} (\mathcal{H}x)|_{\partial} = 0
\end{align}  
which consists of $Nm-k$ (scalar) equations and is linear in the boundary values of $(\mathcal{H}x), \partial_{\zeta} (\mathcal{H}x), \dots, \partial_{\zeta}^{N-1} (\mathcal{H}x)$. 
Similarly, the boundary control and boundary observation conditions~\eqref{eq:S input/output eq} consist of $k$ (scalar) equations each and are again linear in $(\mathcal{H}x(t))|_{\partial}$, that is, the boundary control and boundary observation operators $\mathcal{B},\mathcal{C}: D(\mathcal{A}) \to \R^k$ from~\eqref{eq:S input/output eq} are linear and of the form
\begin{align} \label{eq:S bdry control/observ operators}
\mathcal{B}x := W_{B,2} (\mathcal{H}x)|_{\partial}
\qquad \text{and} \qquad
\mathcal{C}x := W_C (\mathcal{H}x)|_{\partial}
\end{align}
with matrices $W_{B,2}, W_C \in \R^{k\times 2mN}$. 
In all our results, 
we will impose the following additional impedance-passivity condition on $\mathfrak{S}$.

\begin{cond} \label{cond:S imp-passive}
$\mathfrak{S}$ is impedance-passive, that is,
\begin{align}  \label{eq:S imp-passive}
\scprd{x,\mathcal{A}x}_X \le (\mathcal{B}x)^{\top} \mathcal{C}x \qquad (x \in D(\mathcal{A})).
\end{align}
\end{cond}

It follows from Condition~\ref{cond:S imp-passive} by virtue of~\cite{JaKa17} (Theorem~2.3) that 
\begin{align}
A := \mathcal{A}|_{D(\mathcal{A}) \cap \ker \mathcal{B}}
\end{align}
is a contraction semigroup generator on $X$ and that the matrix $W_B := (W_{B,1}^{\top}, W_{B,2}^{\top})^{\top} \in \R^{mN \times 2mN}$ has full row rank. 
In particular, $\mathcal{A}$ and $\mathcal{B}$ define a boundary control system~\cite{JacobZwart} 
(by the same arguments as for Theorem~11.3.2 of~\cite{JacobZwart} and by Lemma~A.3 of~\cite{LeZwMa05}), so that for every initial state $x_0 \in D(\mathcal{A})$ and every control input $u \in C^2([0,\infty),\R^k)$ with $u(0) = \mathcal{B}x_0$, the system~\eqref{eq:S diff eq} and~\eqref{eq:S input/output eq} has a unique global classical solution
\begin{align} \label{eq:class-sol-of-S}
x(\cdot,x_0,u) \in C^1([0,\infty),X).
\end{align} 
Also, 
along every such classical solution, the following energy dissipation inequality is satisfied by virtue of~\eqref{eq:S imp-passive}:
\begin{align}
E_x'(t) = \scprd{x(t),\mathcal{A}x(t)}_X \le (\mathcal{B}x(t))^{\top} \mathcal{C}x(t) = u(t)^{\top} y(t) 
\qquad (t \in [0,\infty)),
\end{align}
where $E_x(t) := E(x(t))$ and $x(t) := x(t,x_0,u)$ and $y(t) := y(t,x_0,u) := \mathcal{C}x(t,x_0,u)$. 

\subsection{Setting: the controller}

As the controller $\mathfrak{S}_c$ to stabilize $\mathfrak{S}$ we choose a finite-dimensional nonlinear system which evolves according to the ordinary differential equation (with input $u_c$ and output $y_c$)
\begin{gather}
v' = \begin{pmatrix} v_1' \\ v_2' \end{pmatrix}
= 
\begin{pmatrix}
K v_2 \\
-\nabla \mathcal{P}(v_1) - \mathcal{R}(Kv_2) + B_c u_c 
\end{pmatrix} 
\\
y_c = B_c^{\top} K v_2 + S_c u_c 
\end{gather}
and whose energy in the state $v = (v_1,v_2) \in \R^{2m_c}$ 
is given by 
\begin{align}
E_c(v) := \mathcal{P}(v_1) + \frac{1}{2} v_2^{\top}Kv_2
\end{align}
(potential energy plus kinetic energy). 
In these equations, $K \in \R^{m_c \times m_c}$, $B_c \in \R^{m_c \times k}$, $S_c \in \R^{k \times k}$ represent a generalized mass matrix, an input matrix, and a direct feedthrough matrix respectively  and they are such that
\begin{align} 
K > 0 \qquad \text{and} \qquad S_c \ge \varsigma > 0  \qquad (\varsigma := \min \sigma(S_c)).
\label{eq:S_c pos def} 
\end{align}
Additionally, 
the potential energy $\mathcal{P}: \R^{m_c} \to [0,\infty)$ is differentiable such that $\nabla \mathcal{P}$ is locally Lipschitz continuous and $\mathcal{P}(0) = 0$ and the damping function $\mathcal{R}: \R^{m_c} \to \R^{m_c}$ is locally Lipschitz continuous such that $\mathcal{R}(0) = 0$.
%
As the norm on the controller state space $V := \R^{2m_c}$ we choose $|\cdot|_V$ defined by
\begin{align*}
|v|_V^2 = |(v_1,v_2)|_V^2:= |v_1|^2 + v_2^{\top} K v_2
\end{align*}
which is obviously equivalent to the standard norm on $\R^{2m_c}$ and is induced by a scalar  product $\scprd{\cdot,\cdot \cdot}_V$.
In all our results, 
we will impose the following additional conditions on the controller $\mathfrak{S}_c$.

\begin{cond} \label{cond:P und R}
\begin{itemize}
\item[(i)]  $\mathcal{P}$ is positive definite and radially unbounded, 
that is, $\mathcal{P}(v_1) > 0$ for all $v_1 \in \R^{m_c} \setminus \{0\}$ 
and $\mathcal{P}(v_1) \longrightarrow \infty$ as $|v_1| \to \infty$
\item[(ii)] $\mathcal{R}$ is damping, that is, $v_2^{\top} \mathcal{R}(v_2) \ge 0$ for all $v_2 \in \R^{m_c}$.
\end{itemize}
\end{cond}

It follows from Condition~\ref{cond:P und R}~(ii) and from~\eqref{eq:S_c pos def} that the controller system $\mathfrak{S}_c$ is passive (in fact, strictly input-passive) w.r.t.~the storage function $E_c$.

\subsection{Setting: the closed-loop system}

Coupling $\mathfrak{S}$ and $\mathfrak{S}_c$ by standard feedback interconnection
\begin{align*}
y(t) = u_c(t) \quad \text{and} \quad  -y_c(t)+d(t) = u(t),
\end{align*}
we obtain the closed-loop system $\tilde{\mathfrak{S}}$ described by the following evolution equation and boundary input and output  conditions:
\begin{gather}
\tilde{x}' = \tilde{\mathcal{A}} \tilde{x} + \tilde{f}(\tilde{x})
\label{eq:cl-diff-eq}
\\
d(t) = \tilde{\mathcal{B}} \tilde{x}(t) \qquad \text{and} \qquad y(t) = \tilde{\mathcal{C}} \tilde{x}(t).
\end{gather}
Its state space is the Hilbert space $\tilde{X} := X \times V = X \times \R^{2m_c}$ with norm $\norm{\cdot} = \norm{\cdot}_{\tilde{X}}$ defined by 
$$\norm{\tilde{x}}^2 = \norm{(x,v)}^2 := \norm{x}_X^2 + |v|_V^2$$ and the energy of the closed-loop system in the state $\tilde{x} = (x,v) \in \tilde{X}$ is
\begin{align*}
\tilde{E}(\tilde{x}) := E(x) + E_c(v) = \frac{1}{2}\norm{x}_X^2 + \mathcal{P}(v_1) + \frac{1}{2} v_2^{\top}Kv_2.
\end{align*}
It follows from Condition~\ref{cond:P und R}~(i) (taking into account  Lemma~2.5 of~\cite{ClLeSt98}) that the energy $\tilde{E}$ is equivalent to the norm $\norm{\cdot}$ of $\tilde{X}$ 
in the following sense: there exist $\ul{\psi}, \ol{\psi} \in \mathcal{K}_{\infty}$ such that for all $\tilde{x} \in \tilde{X}$
\begin{align} \label{eq:E-tilde equiv to norm}
\ul{\psi}(\norm{\tilde{x}}) \le \tilde{E}(\tilde{x}) \le \ol{\psi}(\norm{\tilde{x}}).
\end{align}
In the equations above, the linear and nonlinear operators $\tilde{\mathcal{A}}: D(\tilde{\mathcal{A}}) \to \tilde{X}$ and $\tilde{f}: \tilde{X} \to \tilde{X}$ 
with $D(\tilde{\mathcal{A}}) := D(\mathcal{A}) \times \R^{2m_c}$ 
are given respectively by
\begin{gather*}
\tilde{\mathcal{A}} \tilde{x}
:=
\begin{pmatrix}
\mathcal{A}x \\ Kv_2 \\ -v_1 + B_c \mathcal{C}x
\end{pmatrix}
\qquad \text{and} \qquad
\tilde{f}(\tilde{x})
:=
\begin{pmatrix}
0 \\ 0 \\ v_1 - \nabla \mathcal{P}(v_1) - \mathcal{R}(Kv_2)
\end{pmatrix}
\end{gather*}
and the linear boundary input and output operators $\tilde{\mathcal{B}}, \tilde{\mathcal{C}}: D(\tilde{\mathcal{A}}) \to \R^k$ are given by 
\begin{align}
\tilde{\mathcal{B}} \tilde{x} := \mathcal{B} x + B_c^{\top}K v_2 + S_c \mathcal{C}x, \qquad 
\tilde{\mathcal{C}} \tilde{x} := \mathcal{C} x.
\end{align}

\begin{lm} \label{lm:sgr gen with comp resolv}
If Condition~\ref{cond:S imp-passive} and~\ref{cond:P und R} are  satisfied, then 
\begin{itemize}
\item[(i)] the operator $\tilde{A} := \mathcal{A}|_{D(\tilde{\mathcal{A}}) \cap \ker \tilde{\mathcal{B}}}$ is a contraction semigroup generator on $\tilde{X}$ with compact resolvent
\item[(ii)] $\tilde{f}$ is Lipschitz continuous on bounded subsets of $\tilde{X}$. 
\end{itemize}
\end{lm}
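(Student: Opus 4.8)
The plan is to treat the two claims separately, reserving the real work for part~(i); part~(ii) is essentially immediate. Indeed, $\tilde{f}$ acts nontrivially only in the finite-dimensional component, through the map $v = (v_1,v_2) \mapsto v_1 - \nabla\mathcal{P}(v_1) - \mathcal{R}(Kv_2)$. By hypothesis $\nabla\mathcal{P}$ and $\mathcal{R}$ are locally Lipschitz on $\R^{m_c}$, and since $K$ is a bounded linear map, $v_2 \mapsto \mathcal{R}(Kv_2)$ is locally Lipschitz as well; adding the globally Lipschitz term $v_1$ preserves this. A locally Lipschitz map on $\R^n$ is Lipschitz on every bounded set (cover the compact closure by finitely many balls carrying the local estimate). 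Composing with the bounded coordinate projection $\tilde{X} \to V$ and the bounded inclusion $V \hookrightarrow \tilde{X}$ then yields that $\tilde{f}$ is Lipschitz on bounded subsets of $\tilde{X}$.

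For part~(i) I would invoke the Lumer--Phillips theorem, so the two points to establish are dissipativity of $\tilde{A}$ and surjectivity of $\lambda I - \tilde{A}$ for some (hence all) $\lambda > 0$; density of $D(\tilde{A})$ follows from the surjectivity of the boundary operators $(\mathcal{B},\mathcal{C})$ together with the density of $D(A)$ in $X$. Dissipativity is a direct computation: for $\tilde{x} = (x,v_1,v_2) \in D(\tilde{A})$ the symmetry $K = K^{\top}$ makes the cross terms in the $V$-inner product cancel, leaving
\begin{align*}
\scprd{\tilde{x},\tilde{A}\tilde{x}}_{\tilde{X}} = \scprd{x,\mathcal{A}x}_X + (B_c^{\top} K v_2)^{\top} \mathcal{C}x.
\end{align*}
Using Condition~\ref{cond:S imp-passive} to bound $\scprd{x,\mathcal{A}x}_X \le (\mathcal{B}x)^{\top}\mathcal{C}x$ and then the boundary constraint $\tilde{\mathcal{B}}\tilde{x}=0$, i.e.\ $\mathcal{B}x + B_c^{\top} Kv_2 = -S_c\mathcal{C}x$, I obtain $\scprd{\tilde{x},\tilde{A}\tilde{x}}_{\tilde{X}} \le -(\mathcal{C}x)^{\top} S_c\mathcal{C}x \le -\varsigma|\mathcal{C}x|^2 \le 0$ by~\eqref{eq:S_c pos def}.

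The main obstacle is the range condition. Given data $(g,h_1,h_2) \in \tilde{X}$, I would solve $(\lambda I - \tilde{A})(x,v_1,v_2) = (g,h_1,h_2)$ by first eliminating the finite-dimensional unknowns: the two ODE rows give $v_1 = \lambda^{-1}(h_1 + Kv_2)$ and then $(\lambda I + \lambda^{-1}K)v_2 = h_2 - \lambda^{-1}h_1 + B_c\mathcal{C}x$, uniquely solvable since $K>0$ forces $\lambda I + \lambda^{-1}K > 0$. Substituting the resulting expression for $B_c^{\top} K v_2$ into $\tilde{\mathcal{B}}\tilde x = 0$ turns it into a boundary condition for $x$ alone of impedance type,
\begin{align*}
\mathcal{B}x + \tilde{S}\,\mathcal{C}x = \kappa, \qquad \tilde{S} := B_c^{\top} K(\lambda I + \lambda^{-1}K)^{-1}B_c + S_c \ge \varsigma I,
\end{align*}
with $\kappa \in \R^k$ depending only on $h_1,h_2$. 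Thus the whole problem reduces to solving $\lambda x - \mathcal{A}x = g$ for $x \in D(\mathcal{A})$ under this inhomogeneous boundary condition. Since $\tilde{S}>0$, the static output feedback $\mathcal{B}x = -\tilde{S}\mathcal{C}x$ again yields an impedance-passive system whose boundary matrix has full row rank, so by~\cite{JaKa17} (Theorem~2.3) the operator $A_{\tilde S} := \mathcal{A}|_{\{x\in D(\mathcal{A}):\,\mathcal{B}x+\tilde S\mathcal{C}x=0\}}$ generates a contraction semigroup, whence $\lambda \in \rho(A_{\tilde S})$. Choosing any $x_1 \in D(\mathcal{A})$ with $(\mathcal{B}+\tilde S\mathcal{C})x_1 = \kappa$ (possible by surjectivity of the boundary operator) and setting $x := x_1 + (\lambda I - A_{\tilde S})^{-1}(g - \lambda x_1 + \mathcal{A}x_1)$ solves the reduced problem; reconstructing $v_2,v_1$ then produces a preimage in $D(\tilde{A})$, establishing surjectivity.

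Finally, compactness of the resolvent follows because the $V$-component is finite-dimensional and hence compact, while for the $X$-component $(\lambda I - \tilde{A})^{-1}$ maps $X$ boundedly into $D(\mathcal{A})$ equipped with the graph norm; since $x \in D(\mathcal{A})$ means $\mathcal{H}x \in W^{N,2}((a,b),\R^m)$ with $N \ge 1$ and $\mathcal{H}$ is boundedly invertible, the graph norm controls $\norm{\mathcal{H}x}_{W^{N,2}}$, and the compact Rellich embedding $W^{N,2}((a,b),\R^m) \hookrightarrow L^2((a,b),\R^m)$ renders the inclusion $D(\mathcal{A}) \hookrightarrow X$ compact. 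Hence $(\lambda I - \tilde{A})^{-1}$ is compact on $\tilde{X}$.
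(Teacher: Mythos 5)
Your proof is correct and, at its core, coincides with the paper's: both run Lumer--Phillips, prove dissipativity by the identical computation, and obtain the range condition by eliminating the finite-dimensional unknowns, which yields a boundary condition of impedance type whose feedthrough matrix is exactly the paper's $G_{\lambda}$ --- your $\tilde{S} = S_c + B_c^{\top}K(\lambda I + \lambda^{-1}K)^{-1}B_c$ is $G_{\lambda}$ from~\eqref{eq: G_lambda} computed explicitly --- followed by \cite{JaKa17} (Theorem~2.3) for the auxiliary operator and a lifting ansatz $x = x_1 + (\lambda - A_{\tilde{S}})^{-1}(\cdots)$, which is the paper's decomposition $x = h + x'$. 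Two executional differences are worth recording. First, you obtain positive semidefiniteness of $\tilde{S}-S_c$ directly from the fact that $K$ and $(\lambda I + \lambda^{-1}K)^{-1}$ are commuting symmetric positive definite matrices, so your argument works for every $\lambda > 0$, whereas the paper restricts to $\lambda$ with $2\lambda \ge A_c + A_c^{\top}$ in order to force $G_{\lambda} + G_{\lambda}^{\top} \ge 0$; your version is slightly cleaner. Second, for compactness of the resolvent you bypass the paper's decomposition of $(\tilde{A}-\lambda)^{-1}\tilde{y}_n$ and its citation of \cite{Villegas} (Theorem~2.28), arguing instead via boundedness of the resolvent into the graph norm of $\mathcal{A}$, the equivalence of that graph norm with $\norm{\mathcal{H}\,\cdot\,}_{W^{N,2}}$, and the Rellich embedding; this is sound, but the graph-norm equivalence is not obvious and should be cited (it is Lemma~3.2.3 of \cite{Augner}, which the paper itself invokes only later, in the proof of Theorem~\ref{thm:class solvb}). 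Two small points deserve a more careful formulation, though neither is a gap relative to the paper: the existence of $x_1 \in D(\mathcal{A})$ with $(\mathcal{B}+\tilde{S}\mathcal{C})x_1 = \kappa$ rests on the full-row-rank conclusion of \cite{JaKa17} (Theorem~2.3) combined with surjectivity of the $W^{N,2}$-trace map (the paper packages this as the right inverse $B'$ supplied by the boundary-control-system property), and density of $D(\tilde{A})$ follows most painlessly not from your parenthetical remark about $(\mathcal{B},\mathcal{C})$ but from the standard fact that a dissipative operator on a reflexive space satisfying the range condition is automatically densely defined (Theorem~1.4.6 of \cite{Pazy}).
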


\begin{proof}
Since assertion~(ii) is clear by our assumptions on $\mathcal{P}$ and $\mathcal{R}$, 
we have only to prove assertion~(i) and we do so in three steps.
As a first step, we observe that $\tilde{A}$ is dissipative in $\tilde{X}$. Indeed, for every $\tilde{x} = (x,v) \in D(\tilde{A})$ we have $x \in D(\mathcal{A})$ and $B_c^{\top}Kv_2 = -\mathcal{B}x-S_c \mathcal{C}x$ and hence
\begin{align*}
\langle \tilde{x}, \tilde{A} \tilde{x} \rangle_{\tilde{X}} &= \scprd{x,\mathcal{A}x}_X + v_1^{\top} K v_2 + v_2^{\top} K (-v_1 + B_c \mathcal{C}x) \\
&\le (\mathcal{B}x)^{\top} \mathcal{C}x + (B_c^{\top}Kv_2 )^{\top} \mathcal{C}x 
\le 0,
\end{align*}
where the last two inequalities follow from~\eqref{eq:S imp-passive}  and~\eqref{eq:S_c pos def}. Consequently, $\tilde{A}$ is dissipative in $\tilde{X}$, as desired.
\smallskip

As a second step, we show that $\tilde{A}-\lambda$ is surjective onto $\tilde{X}$ for 
every $\lambda \in (0,\infty)$ with
\begin{align} \label{eq: wahl lambda}
2 \lambda  \ge A_c + A_c^{\top} \qquad 
(A_c := 
\begin{pmatrix}
0 & K \\ -I & 0
\end{pmatrix}
\in \R^{2m_c \times 2m_c}).
\end{align}
So let $\lambda \in (0,\infty)$ as above and let $\tilde{y} = (y,w) \in \tilde{X}$. We then have to find an $\tilde{x} = (x,v) \in D(\tilde{A})$, that is, an $\tilde{x} \in \tilde{X}$ with 
\begin{align} \label{eq: semigr 1}
x \in D(\mathcal{A}) \qquad \text{and} \qquad
(W_{B,2} + S_c W_C)(\mathcal{H}x)|_{\partial} + B_c^{\top} Kv_2 =  0, 
\end{align}
such that $(\tilde{A}-\lambda)\tilde{x} = \tilde{y}$, that is, 
\begin{align} \label{eq: semigr 2} 
(\mathcal{A}-\lambda)x = y 
\qquad \text{and} \qquad
(A_c - \lambda)v + \begin{pmatrix} 0 \\ B_c \mathcal{C}x \end{pmatrix} = w. 
\end{align}
Since $A_c-\lambda$ is invertible for $\lambda \in (0,\infty)$, finding an $\tilde{x} \in \tilde{X}$ with~\eqref{eq: semigr 1} and  \eqref{eq: semigr 2} is equivalent to finding $x \in X$ 
such that
\begin{gather}
x \in D(\mathcal{A}) \qquad \text{and} \qquad
\mathcal{B}'x 
= B_c^{\top}K \begin{pmatrix} 0 & I \end{pmatrix} (\lambda-A_c)^{-1} w \label{eq: semigr 4} \\
(\mathcal{A}-\lambda)x = y, \label{eq: semigr 5} 
\end{gather}
where $\mathcal{B}'x := (W_{B,2}+G_{\lambda}W_C) (\mathcal{H}x)|_{\partial}$ and 
\begin{gather}  \label{eq: G_lambda}
G_{\lambda} := S_c + B_c^{\top} K \begin{pmatrix} 0 & I \end{pmatrix} (\lambda-A_c)^{-1} \begin{pmatrix}
0 \\ B_c
\end{pmatrix}  
= S_c + \begin{pmatrix}
0 \\ B_c
\end{pmatrix}^{\top} L^{\top} (\lambda-A_c)^{-1} L \begin{pmatrix}
0 \\ B_c
\end{pmatrix} \notag \\
(L := 
\begin{pmatrix}
K^{1/2} & 0 \\
0       & K^{1/2}
\end{pmatrix}).
\end{gather}
(In the last equation we used that $L$ commutes with $A_c$.) 
In order to find an $\tilde{x}$ with~\eqref{eq: semigr 4} and~\eqref{eq: semigr 5}, we will show that the port-Hamiltonian operator $A' := \mathcal{A}|_{D(A')}$ with domain 
\begin{gather*}
D(A') := D(\mathcal{A}) \cap \ker \mathcal{B}' \\
= \big\{ x \in X: \mathcal{H}x \in W^{N,2}((a,b),\R^{m}), \, W_{B,1} (\mathcal{H}x)|_{\partial} = 0  \text{ and } (W_{B,2}+G_{\lambda}W_C)(\mathcal{H}x)|_{\partial} = 0 \big\}
\end{gather*}
generates a contraction semigroup on $X$. 
%
So let $x \in D(A')$ and $u := W_{B,2}(\mathcal{H}x)|_{\partial}$ and $y := W_C (\mathcal{H}x)|_{\partial}$, then $x \in D(\mathcal{A})$ and $u+G_{\lambda}y = 0$ and hence
\begin{align} \label{eq: A' dissip}
\scprd{x,A'x}_X = \scprd{x,\mathcal{A}x}_X \le u^{\top} y
= -\frac{1}{2} y^{\top} (G_{\lambda} + G_{\lambda}^{\top}) y
\end{align}
by virtue of~\eqref{eq:S imp-passive}. It follows from~\eqref{eq: G_lambda}, \eqref{eq:S_c pos def}, \eqref{eq: wahl lambda} that
\begin{align*}
G_{\lambda} + G_{\lambda}^{\top} 
=
2 S_c + \begin{pmatrix}
0 \\ B_c
\end{pmatrix}^{\top} L^{\top}  (\lambda-A_c)^{-1} \big( 2 \lambda - (A_c + A_c^{\top}) \big) (\lambda-A_c^{\top})^{-1} L \begin{pmatrix}
0 \\ B_c
\end{pmatrix}
\ge 0
\end{align*}
and thus~\eqref{eq: A' dissip} yields the dissipativity of $A'$ in $X$. So, by the characterization of the contraction semigroup generator property for port-Hamiltonian operators from~\cite{JaKa17} (Theorem~2.3), $A'$ is a contraction semigroup generator on $X$ and, moreover, the boundary matrix 
\begin{align*}
W' := \begin{pmatrix}
W_{B,1} \\
W_{B,2} + G_{\lambda} W_C
\end{pmatrix}
\in \R^{mN \times 2mN}
\end{align*}
associated with $A'$ has full row rank. In particular, $\mathcal{A}$ and $\mathcal{B}'$ define a boundary control system by the same arguments as for Theorem~11.3.2 from~\cite{JacobZwart} and by Lemma~A.3 from~\cite{LeZwMa05}, and hence 
there exists an operator $B' \in L(U,X)$ with $U := \R^k$ such that $B'U \subset D(\mathcal{A}) = D(\mathcal{B}')$ and $\mathcal{A} B' \in L(U,X)$ and 
\begin{align} \label{eq: curly-B' surj}
\mathcal{B}' B' u = u \qquad (u \in U).
\end{align}
With these preliminary considerations about $A'$ we can now finally prove the existence of an $x \in X$ with~\eqref{eq: semigr 4} and~\eqref{eq: semigr 5} and hence the surjectivity of $\tilde{A}-\lambda$. Indeed, with the ansatz 
\begin{align} \label{eq: ansatz surj}
x = h + x' \qquad (h \in D(\mathcal{A}) \text{ and } x' \in D(A')),
\end{align}
the conditions \eqref{eq: semigr 4} and~\eqref{eq: semigr 5} become equivalent to
\begin{align}
\mathcal{B}'h 
&= B_c^{\top}K \begin{pmatrix} 0 & I \end{pmatrix} (\lambda-A_c)^{-1} w   \label{eq: semigr 6}\\
(A'-\lambda)x' &= y - (\mathcal{A}-\lambda)h.    \label{eq: semigr 7}
\end{align}
And since by~\eqref{eq: curly-B' surj} $\mathcal{B}'$ is surjective and $A'$ is a contraction semigroup generator, 
we really can find $h \in D(\mathcal{B}') = D(\mathcal{A})$ and $x' \in D(A')$ such that~\eqref{eq: semigr 6} and~\eqref{eq: semigr 7} are satisfied, as desired.
\smallskip

As a third step, we show that $\tilde{A}$ has compact resolvent. So let $\lambda \in (0,\infty)$ with~\eqref{eq: wahl lambda}. Also, let $(\tilde{y}_n) = (y_n,w_n)$ be a bounded sequence in $\tilde{X}$ and write 
\begin{align*}
\begin{pmatrix}
x_n \\ v_n
\end{pmatrix}
=
\tilde{x}_n := (\tilde{A}-\lambda)^{-1} \tilde{y}_n 
= 
(\tilde{A}-\lambda)^{-1}
\begin{pmatrix}
y_n \\ w_n
\end{pmatrix}
\qquad (n \in \N).
\end{align*}
It then follows by~\eqref{eq: ansatz surj}, \eqref{eq: semigr 6}, \eqref{eq: semigr 7}, \eqref{eq: curly-B' surj} that $x_n = h_n + x_n'$ with 
\begin{align}
h_n &= B' \big( B_c^{\top}K \begin{pmatrix} 0 & I \end{pmatrix} (\lambda-A_c)^{-1} w_n \big)  \\
x_n' &= (A'-\lambda)^{-1} \Big( y_n - (\mathcal{A}-\lambda) B' \big( B_c^{\top}K \begin{pmatrix} 0 & I \end{pmatrix} (\lambda-A_c)^{-1} w_n \big) \Big).
\end{align}
Since $(\tilde{y}_n) = (y_n,w_n)$ is bounded, the sequences 
\begin{align*}
(w_n), \qquad (v_n), \qquad \big( y_n - (\mathcal{A}-\lambda) B' \big( B_c^{\top}K \begin{pmatrix} 0 & I \end{pmatrix} (\lambda-A_c)^{-1} w_n \big)\big)
\end{align*}
are bounded as well. It follows by the finite-dimensionality of $\R^{2m_c}$ and the compactness of $(A'-\lambda)^{-1}$ (Theorem~2.28 in~\cite{Villegas}) that there exists a subsequence $(n_k)$ of $(n)$ sucht that $(w_{n_k})$ and $(v_{n_k})$ converge in $\R^{2m_c}$ and such that $(x_{n_k}')$ converges in $X$. Consequently, $(\tilde{x}_{n_k})$ is convergent as well, and we are done.
\end{proof}

\section{Solvability of the closed-loop system} \label{sect:solvb}

In this section, we show that under suitable conditions the initial value problem 
\begin{equation} \label{eq:ivp closed-loop d ne 0}
\begin{gathered}
\tilde{x}' = \tilde{\mathcal{A}} \tilde{x} + \tilde{f}(\tilde{x}) \qquad \text{and} \qquad \tilde{x}(0) = \tilde{x}_0 \\
d(t) = \tilde{\mathcal{B}}\tilde{x}(t)
\end{gathered}
\end{equation}
of the closed-loop system has a global solution for suitable initial values $\tilde{x}_0$ and disturbance inputs $d$. 
We will achieve this by applying the standard theory of semilinear evolution equations from~\cite{Pazy}. 
As it stands, however, the closed-loop equation~\eqref{eq:ivp closed-loop d ne 0} is not a semilinear evolution equation in the sense of~\cite{Pazy} because, for one thing, the linear part $\tilde{\mathcal{A}}$ of the differential equation is not a semigroup generator and because, for another thing, in addition to the differential equation the side condition 
\begin{align*}
d(t) = \tilde{\mathcal{B}}\tilde{x}(t)
\end{align*}
occurs. 
A way out of this difficulty is to impose the following extra condition, 
which is easily seen to be satisfied if, for instance, 
\begin{align} \label{eq:W-matrix, def}
W := \begin{pmatrix}
W_{B} \\ W_C
\end{pmatrix}
\in \R^{(mN+k)\times 2mN}
\qquad \text{with} \qquad
W_B := \begin{pmatrix}
W_{B,1} \\ W_{B,2}
\end{pmatrix}
\end{align}
is a matrix of full row rank $mN+k$.

\begin{cond} \label{cond:B-tilde surj}
$\tilde{\mathcal{B}}$ is a surjective linear map from $D(\tilde{\mathcal{A}})$ onto $\R^k$.
\end{cond}

With the help of this extra condition, we can turn~\eqref{eq:ivp closed-loop d ne 0} into a truly semilinear evolution equation in the sense of~\cite{Pazy}. 
In fact, Condition~\ref{cond:B-tilde surj} implies the existence of a linear right-inverse $\tilde{R}: \R^k \to D(\tilde{\mathcal{A}})$ of $\tilde{\mathcal{B}}$, that is,
\begin{align*}
\tilde{\mathcal{B}}\tilde{R}d = d 
\end{align*} 
for all $d \in \R^k$. We can thus perform the transformation
\begin{align} \label{eq:Fattorini transformation}
\tilde{\xi}(t) = \tilde{x}(t) - \tilde{R}d(t)
\end{align}
which is well-known from linear boundary control problems~\cite{Fa68}, \cite{JacobZwart}, \cite{CurtainZwart}. 
Via this transformation, the classical solutions $\tilde{x}$ of~\eqref{eq:ivp closed-loop d ne 0} are in one-to-one correspondence -- for continuously differentiable disturbances $d$ -- with the classical solutions $\tilde{\xi}$ 
of 
\begin{align} \label{eq:ivp closed-loop d ne 0, transformed}
\tilde{\xi}' = \tilde{A}\tilde{\xi} + \tilde{f}(\tilde{\xi}+\tilde{R}d(t)) + \tilde{\mathcal{A}} \tilde{R}d(t) - \tilde{R}d'(t) 
\qquad \text{and} \qquad 
\tilde{\xi}(0) = \tilde{x}_0 - \tilde{R}d(0).
\end{align}
And this is now, in view of Lemma~\ref{lm:sgr gen with comp resolv}, a truly semilinear evolution equation (with an explicitly time-dependent nonlinearity).
We can therefore apply standard semilinear theory to obtain classical solvability of~\eqref{eq:ivp closed-loop d ne 0} for sufficiently regular $\tilde{x}_0$ and $d$ (Section~\ref{sect:solvb classical}), 
and, by a suitable density and approximation argument, we then also  obtain generalized solvability of~\eqref{eq:ivp closed-loop d ne 0} for sufficiently irregular $\tilde{x}_0$ and $d$ (Section~\ref{sect:solvb generalized}). 

\subsection{Solvability in the classical sense} \label{sect:solvb classical}

In this section, we show that for sufficiently regular initial states $\tilde{x}_0$ and disturbances $d$, namely for  
\begin{align} \label{eq:class data, def}
(\tilde{x}_0,d) \in \mathcal{D} := \big\{ (\tilde{x}_0,d) \in D(\tilde{\mathcal{A}}) \times C_c^2([0,\infty),\R^k): d(0) = \tilde{\mathcal{B}} \tilde{x}_0 \big\}
\end{align}
(set of classical data), the closed-loop equation~\eqref{eq:ivp closed-loop d ne 0} has a classical solution existing globally in time. 
A \emph{classical solution} of~\eqref{eq:ivp closed-loop d ne 0} is a continuously differentiable function $\tilde{x}: J \to \tilde{X}$ on an interval $J \subset [0,\infty)$ containing $0$ such that for every $t \in J$ one has (i) that $\tilde{x}(t) \in D(\tilde{\mathcal{A}})$ and (ii) that 
\begin{equation} 
\begin{gathered}
\tilde{x}'(t) = \tilde{\mathcal{A}} \tilde{x}(t) + \tilde{f}(\tilde{x}(t)) \qquad \text{and} \qquad \tilde{x}(0) = \tilde{x}_0 \\
d(t) = \tilde{\mathcal{B}}\tilde{x}(t).
\end{gathered}
\end{equation}

\begin{thm} \label{thm:class solvb}
Suppose that Conditions~\ref{cond:S imp-passive}, \ref{cond:P und R}, \ref{cond:B-tilde surj} are satisfied. Then 
\begin{itemize}
\item[(i)] for every $(\tilde{x}_0,d) \in \mathcal{D}$ the closed-loop equation~\eqref{eq:ivp closed-loop d ne 0} has a unique global classical solution 
\begin{align}
\tilde{x}(\cdot,\tilde{x}_0,d) \in C^1([0,\infty),\tilde{X})
\end{align}
and the output $y(\cdot,\tilde{x}_0,d) := \tilde{\mathcal{C}} \tilde{x}(\cdot,\tilde{x}_0,d)$
is a continuous function from $[0,\infty)$ to $\R^k$
\item[(ii)] there exist $\ul{\sigma}, \ul{\gamma} \in \mathcal{K}$ such that for every $(\tilde{x}_0,d) \in \mathcal{D}$
\begin{align} \label{eq:UGS, class}
\norm{\tilde{x}(t,\tilde{x}_0,d)} \le \ul{\sigma}(\norm{\tilde{x}_0}) + \ul{\gamma}(\norm{d}_{[0,t],2}) 
\qquad (t \in [0,\infty)).
\end{align}
\end{itemize}
\end{thm}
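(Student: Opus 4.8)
The plan is to prove (i) locally first, using the semilinear reformulation already prepared in the text, and then to derive a single energy estimate that simultaneously supplies the a priori bound needed for global existence and the perturbed stability estimate~\eqref{eq:UGS, class} of (ii).

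\emph{Step 1 (local classical solvability and uniqueness).} Fix $(\tilde{x}_0,d)\in\mathcal{D}$ and, using Condition~\ref{cond:B-tilde surj}, a linear right-inverse $\tilde{R}$ of $\tilde{\mathcal{B}}$. I would pass via the Fattorini transformation~\eqref{eq:Fattorini transformation} to the transformed equation~\eqref{eq:ivp closed-loop d ne 0, transformed}, whose nonlinearity $F(t,\tilde{\xi}):=\tilde{f}(\tilde{\xi}+\tilde{R}d(t))+\tilde{\mathcal{A}}\tilde{R}d(t)-\tilde{R}d'(t)$ is, because $d\in C_c^2$ and by Lemma~\ref{lm:sgr gen with comp resolv}(ii), locally Lipschitz in $\tilde{\xi}$ and continuously differentiable in $t$. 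By Lemma~\ref{lm:sgr gen with comp resolv}(i) the operator $\tilde{A}$ generates a contraction semigroup, and the compatibility condition $d(0)=\tilde{\mathcal{B}}\tilde{x}_0$ built into $\mathcal{D}$ gives $\tilde{\xi}(0)=\tilde{x}_0-\tilde{R}d(0)\in D(\tilde{\mathcal{A}})\cap\ker\tilde{\mathcal{B}}=D(\tilde{A})$. The standard semilinear theory of~\cite{Pazy} then yields a unique maximal classical solution $\tilde{\xi}$ on some interval $[0,t_{\max})$; here I would use that $\tilde{X}$ is a Hilbert, hence reflexive, space so that the local Lipschitz continuity of $t\mapsto F(t,\tilde{\xi}(t))$ upgrades the mild solution to a classical one. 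Transforming back produces a unique maximal classical solution $\tilde{x}=\tilde{\xi}+\tilde{R}d$ of~\eqref{eq:ivp closed-loop d ne 0}.

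\emph{Step 2 (energy dissipation estimate).} Along this classical solution I would differentiate $\tilde{E}(\tilde{x}(t))$. Inserting $x'=\mathcal{A}x$, $v_1'=Kv_2$ and $v_2'=B_c\mathcal{C}x-\nabla\mathcal{P}(v_1)-\mathcal{R}(Kv_2)$, the two potential-energy cross terms cancel by symmetry of $K$, leaving
\[
\ddt{\tilde{E}(\tilde{x}(t))}=\scprd{x,\mathcal{A}x}_X+(B_c^{\top}Kv_2)^{\top}\mathcal{C}x-(Kv_2)^{\top}\mathcal{R}(Kv_2).
\]
Bounding $\scprd{x,\mathcal{A}x}_X\le(\mathcal{B}x)^{\top}\mathcal{C}x$ by~\eqref{eq:S imp-passive}, substituting the boundary relation $\mathcal{B}x+B_c^{\top}Kv_2=d-S_c\mathcal{C}x$ coming from $d=\tilde{\mathcal{B}}\tilde{x}$, and using the damping inequality of Condition~\ref{cond:P und R}(ii) together with $S_c\ge\varsigma$ from~\eqref{eq:S_c pos def}, I obtain with $y:=\mathcal{C}x$ and a Young inequality
\[
\ddt{\tilde{E}(\tilde{x}(t))}\le d(t)^{\top}y-\varsigma|y|^2\le\tfrac{1}{4\varsigma}|d(t)|^2,
\]
and integration gives $\tilde{E}(\tilde{x}(t))\le\tilde{E}(\tilde{x}_0)+\tfrac{1}{4\varsigma}\norm{d}_{[0,t],2}^2$.

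\emph{Step 3 (global existence and the perturbed bound).} Since $d\in C_c^2$ has $\norm{d}_2<\infty$, the previous inequality keeps $\tilde{E}(\tilde{x}(t))$, and hence by~\eqref{eq:E-tilde equiv to norm} the norm $\norm{\tilde{x}(t)}$, bounded on $[0,t_{\max})$; the blow-up alternative of~\cite{Pazy} then forces $t_{\max}=\infty$, proving the global part of (i). For (ii) I feed the energy bound into~\eqref{eq:E-tilde equiv to norm} to get $\ul{\psi}(\norm{\tilde{x}(t)})\le\ol{\psi}(\norm{\tilde{x}_0})+\tfrac{1}{4\varsigma}\norm{d}_{[0,t],2}^2$, split the right-hand side by the elementary estimate $\ul{\psi}^{-1}(a+b)\le\ul{\psi}^{-1}(2a)+\ul{\psi}^{-1}(2b)$, and set $\ul{\sigma}(r):=\ul{\psi}^{-1}(2\ol{\psi}(r))$ and $\ul{\gamma}(r):=\ul{\psi}^{-1}(r^2/(2\varsigma))$, both in $\mathcal{K}$ because $\ul{\psi},\ol{\psi}\in\mathcal{K}_{\infty}$, yielding~\eqref{eq:UGS, class}. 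Continuity of $y=\tilde{\mathcal{C}}\tilde{x}$ follows since $\tilde{\mathcal{C}}$ is bounded on $D(\tilde{\mathcal{A}})$ with the graph norm, while $t\mapsto\tilde{x}(t)$ is graph-norm continuous because $\tilde{\mathcal{A}}\tilde{x}=\tilde{x}'-\tilde{f}(\tilde{x})$ is continuous.

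The genuinely delicate point is the upgrade from a mild to a classical solution in Step~1: $\tilde{f}$ is only locally Lipschitz (Lemma~\ref{lm:sgr gen with comp resolv}(ii)) rather than $C^1$, so this is exactly where the smoothness $d\in C_c^2$ and the reflexivity of $\tilde{X}$ are indispensable. The energy computation of Step~2, although it is the technical core, is then a direct calculation once one keeps track of the cancellation of the potential-energy terms and uses the boundary relation to turn $(\mathcal{B}x+B_c^{\top}Kv_2)^{\top}\mathcal{C}x$ into $d^{\top}y-y^{\top}S_cy$; everything downstream is routine.
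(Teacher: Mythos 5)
Your proposal is correct and takes essentially the same approach as the paper's proof: the Fattorini transformation combined with Pazy's semilinear theory in reflexive spaces for local classical solvability, the identical energy estimate $\tilde{E}(\tilde{x}(t)) \le \tilde{E}(\tilde{x}_0) + \frac{1}{4\varsigma}\norm{d}_{[0,t],2}^2$ obtained with the same Young-inequality choice $\alpha = 1/(2\varsigma)$, the blow-up alternative for globality, and the same comparison functions $\ul{\sigma}, \ul{\gamma}$. The one step you assert rather than justify -- the graph-norm boundedness of $\tilde{\mathcal{C}}$ on $D(\tilde{\mathcal{A}})$ used for output continuity -- is exactly what the paper establishes via the norm equivalence of Lemma~3.2.3 in \cite{Augner}, so no genuine gap remains.
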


\begin{proof}
As a first step, we show that for every $(\tilde{x}_0,d) \in \mathcal{D}$ the initial value problem~\eqref{eq:ivp closed-loop d ne 0} has a unique maximal classical solution $\tilde{x}(\cdot,\tilde{x}_0,d)$ on some half-open interval $[0,T_{\tilde{x}_0,d})$. 
So let $(\tilde{x}_0,d) \in \mathcal{D}$. As is easily verified, a function $\tilde{x}$ is a maximal classical solution of~\eqref{eq:ivp closed-loop d ne 0} if and only if the function $\tilde{x}-\tilde{R}d$ is a maximal classical solution of~\eqref{eq:ivp closed-loop d ne 0, transformed}. We therefore have only to show that~\eqref{eq:ivp closed-loop d ne 0, transformed} has a unique maximal classical solution existing on some half-open interval of the form $[0,T)$. And, in view of Lemma~\ref{lm:sgr gen with comp resolv}, this immediately follows by the standard solvability results for semilinear evolution equations in reflexive spaces (Theorem~6.1.4 and~6.1.6 in~\cite{Pazy}). 
In particular, it follows by variation of constants that for every $(\tilde{x}_0,d) \in \mathcal{D}$ the corresponding maximal classical solution $\tilde{x} = \tilde{x}(\cdot,\tilde{x}_0,d)$ 
satisfies the following integral equation: 
\begin{align}  \label{eq:voc/mild sol, kappa=0}
\tilde{x}(t) = \e^{\tilde{A}t} \tilde{x}_0  + \int_0^t \e^{\tilde{A}(t-s)} \tilde{f}(\tilde{x}(s)) \d s + \Phi_t(d)
%
\end{align}
for every $t \in [0,T_{\tilde{x}_0,d})$, where
\begin{align} \label{eq:Phi_t, def}
\Phi_t(d) := - \e^{\tilde{A}t} \tilde{R}d(0) + \tilde{R}d(t) + \int_0^t \e^{\tilde{A}(t-s)} \big( \tilde{\mathcal{A}}\tilde{R} d(s) - \tilde{R}d'(s) \big) \d s.
\end{align}

As a second step, we show that for every $(\tilde{x}_0,d) \in \mathcal{D}$ the output $y(\cdot,\tilde{x}_0,d) := \tilde{\mathcal{C}}\tilde{x}(\cdot,\tilde{x}_0,d)$ is a continuous function from $[0,T_{\tilde{x}_0,d})$ to $\R^k$. 
So let $(\tilde{x}_0,d) \in \mathcal{D}$, write $(x,v) := \tilde{x} := \tilde{x}(\cdot,\tilde{x}_0,d)$, and let 
$$\norm{\cdot}_{\mathcal{A}} := (\norm{\cdot}_X^2 + \norm{\mathcal{A} \,\cdot \,}_X^2)^{1/2}$$ 
denote the graph norm of $\mathcal{A}$. Since $\tilde{x}$ is a classical solution of~\eqref{eq:ivp closed-loop d ne 0}, the functions
$t \mapsto x(t)$ and $t \mapsto \mathcal{A}x(t) = x'(t)$ are continuous from $[0,T_{\tilde{x}_0,d})$ to $X$ and therefore
\begin{align}
[0,T_{\tilde{x}_0,d}) \ni t \mapsto x(t) \in (D(\mathcal{A}),\norm{\cdot}_{\mathcal{A}})
\end{align}
is continuous. Since, moreover, 
\begin{align} \label{eq:class solvb, 2.1}
\ul{c} \norm{\mathcal{H}\xi}_{W^{N,2}} \le \norm{\xi}_{\mathcal{A}} \le \ol{c} \norm{\mathcal{H}\xi}_{W^{N,2}} \qquad (\xi \in D(\mathcal{A}))
\end{align}
for some positive constants $\ul{c}, \ol{c}$ (Lemma~3.2.3 in~\cite{Augner}), the map
\begin{align} \label{eq:class solvb, 2.2}
D(\mathcal{A}) \ni \xi \mapsto \mathcal{C}\xi = W_C (\mathcal{H}\xi)|_{\partial} \in \R^k
\end{align}
is continuous as well. Combining now~\eqref{eq:class solvb, 2.1} and~\eqref{eq:class solvb, 2.2}, the second step follows. 
\smallskip

As a third step, we show that there exist $\ul{\sigma}, \ul{\gamma} \in \mathcal{K}$ such that for every $(\tilde{x}_0,d) \in \mathcal{D}$
\begin{align} \label{eq:class solvb, step 3}
\norm{\tilde{x}(t,\tilde{x}_0,d)} \le \ul{\sigma}(\norm{\tilde{x}_0}) + \ul{\gamma}(\norm{d}_{[0,t],2}) 
\end{align}
for all $t \in [0,T_{\tilde{x}_0,d})$. 
So let $(\tilde{x}_0,d) \in \mathcal{D}$ and write $(x,v) := \tilde{x}(\cdot,\tilde{x}_0,d)$ and $y := y(\cdot,\tilde{x}_0,d) = \mathcal{C}x$. Since $\tilde{x}(\cdot,\tilde{x}_0,d)$ is a classical solution by the first step, the function 
\begin{align*}
s \mapsto \tilde{E}_{\tilde{x}}(s) := \tilde{E}(\tilde{x}(s,\tilde{x}_0,d) = \frac{1}{2} \norm{x(s)}_X^2 + \mathcal{P}(v_1(s)) + \frac{1}{2} v_2(s)^{\top}K v_2(s)
\end{align*}
is continuously differentiable and its derivative satisfies 
\begin{align} \label{eq:class solvb, derivative of energy}
\tilde{E}_{\tilde{x}}'(s) 
&= 
\scprd{x(s),\mathcal{A}x(s)}_X - (Kv_2(s))^{\top} \mathcal{R}(Kv_2(s)) + \big( B_c^{\top}Kv_2(s)\big)^{\top} y(s) \notag \\
&=
\scprd{x(s),\mathcal{A}x(s)}_X - (Kv_2(s))^{\top} \mathcal{R}(Kv_2(s)) \notag \\
&\qquad \qquad \qquad \quad \,\,\,\,\, + d(s)^{\top}y(s) - (\mathcal{B}x(s))^{\top}y(s) - y(s)^{\top}S_c y(s)
\end{align}
for all $s \in [0,T_{\tilde{x}_0,d})$. With the help of Condition~\ref{cond:S imp-passive} and~\ref{cond:P und R} we therefore get that
\begin{align} \label{eq:class solvb, derivative of energy, estimate}
\tilde{E}_{\tilde{x}}'(s) 
%
%
&\le 
|d(s)||y(s)| - (Kv_2(s))^{\top} \mathcal{R}(Kv_2(s)) - \varsigma |y(s)|^2 \notag \\
&\le 
\frac{\alpha}{2} |d(s)|^2 + \frac{1}{2\alpha} |y(s)|^2  - \varsigma |y(s)|^2 - (Kv_2(s))^{\top} \mathcal{R}(Kv_2(s)) \notag \\
&\le 
\frac{\alpha}{2} |d(s)|^2 + \frac{1}{2\alpha} |y(s)|^2  - \varsigma |y(s)|^2
\end{align}
for all $s \in [0,T_{\tilde{x}_0,d})$ and arbitrary $\alpha \in (0,\infty)$, where $\varsigma$ as in~\eqref{eq:S_c pos def} is the smallest eigenvalue of $S_c$. 
Choosing now $\alpha := 1/(2\varsigma)$, we see from~\eqref{eq:class solvb, derivative of energy, estimate} by integration that
\begin{align} \label{eq:UGS, energy, class}
\tilde{E}(\tilde{x}(t,\tilde{x}_0,d)) 
%
%
\le \tilde{E}(\tilde{x}_0) + \frac{1}{4\varsigma} \norm{d}_{[0,t],2}^2
\end{align}
for every $t \in [0,T_{\tilde{x}_0,d})$. 
Since $\tilde{E}$ is equivalent to the norm of $\tilde{X}$ by~\eqref{eq:E-tilde equiv to norm}, we further conclude that
\begin{align*}
\norm{\tilde{x}(t,\tilde{x}_0,d)} \le \ul{\psi}^{-1}\Big( \ol{\psi}(\norm{\tilde{x}_0}) + \frac{1}{4\varsigma} \norm{d}_{[0,t],2}^2 \Big)
\le 
\ul{\psi}^{-1}\big( 2 \ol{\psi}(\norm{\tilde{x}_0}) \big) + \ul{\psi}^{-1}\Big( \frac{1}{2\varsigma} \norm{d}_{[0,t],2}^2 \Big)
\end{align*}
for every $t \in [0,T_{\tilde{x}_0,d})$. So, \eqref{eq:class solvb, step 3} follows because $\ul{\sigma}, \ul{\gamma}$ defined by
\begin{align}
\ul{\sigma}(r) := \ul{\psi}^{-1}\big( 2 \ol{\psi}(r)\big)
\qquad \text{and} \qquad
\ul{\gamma}(r) := \ul{\psi}^{-1}\Big( \frac{1}{2\varsigma} r^2 \Big)
\end{align}
obviously belong to the class $\mathcal{K}$. 
\smallskip

As a fourth and last step, we show that the maximal existence time $T_{\tilde{x}_0,d} = \infty$. Combined with the previous steps, 
this proves the theorem. 
So let $(\tilde{x}_0,d) \in \mathcal{D}$ and assume that $T_{\tilde{x}_0,d} < \infty$, that is, the existence interval $[0,T_{\tilde{x}_0,d})$ of the maximal classical solution $\tilde{x}(\cdot,\tilde{x}_0,d)-\tilde{R}d$ of~\eqref{eq:ivp closed-loop d ne 0, transformed} is bounded. So, by the standard blow-up result for semilinear evolution equations (Theorem~6.1.4 of~\cite{Pazy}), the solution $\tilde{x}(\cdot,\tilde{x}_0,d)-\tilde{R}d$ of~\eqref{eq:ivp closed-loop d ne 0, transformed} blows up. And therefore $\tilde{x}(\cdot,\tilde{x}_0,d)$ blows up as well:
\begin{align}
\sup_{t \in [0,T_{\tilde{x}_0,d})} \norm{ \tilde{x}(t,\tilde{x}_0,d) }
\ge \sup_{t \in [0,T_{\tilde{x}_0,d})} \norm{ \tilde{x}(t,\tilde{x}_0,d) - \tilde{R}d(t)} - \|\tilde{R}\| \norm{d}_{[0,T_{\tilde{x}_0,d}),\infty} = \infty
\end{align}
(recall that $d$ is continuous with compact support). Contradiction to the estimate from the third step!
\end{proof}

It can be shown that for $(\tilde{x}_0,d) \in \tilde{X} \times W^{1,2}_{\mathrm{loc}}([0,\infty),\R^k)$, the initial value problem~\eqref{eq:ivp closed-loop d ne 0} still has a (unique) global mild solution, that is, a continuous function $\tilde{x}: [0,\infty) \to \tilde{X}$ satisfying the variation-of-constants formula~\eqref{eq:voc/mild sol, kappa=0}. In order to see this -- especially the global existence -- one can make a density and approximation argument similar to the one from the next section: one first trivially extends $\Phi_t$ to a bounded linear operator on $W^{1,2}([0,\infty),\R^k)$ and then uses the same arguments as in Theorem~\ref{thm:density and approximation}. 
Since we will not need mild solutions in the sequel, however, we omit the details. 
%

\subsection{Solvability in the generalized sense} \label{sect:solvb generalized}

In this section, we show that for sufficiently irregular initial states $\tilde{x}_0$ and disturbances~$d$, namely for  
\begin{align} \label{eq:general data, def}
(\tilde{x}_0,d) \in \tilde{X} \times L^2_{\mathrm{loc}}([0,\infty),\R^k)
\end{align}
(set of generalized data), the closed-loop equation~\eqref{eq:ivp closed-loop d ne 0} still has a generalized solution existing globally in time and arising as a suitable limit of classical solutions.
In showing the  existence of such limits we will make use of the  integral equation
\begin{align}  \label{eq:voc, kappa=0}
\tilde{x}(t,\tilde{x}_0,d) = \e^{\tilde{A}t} \tilde{x}_0  + \int_0^t \e^{\tilde{A}(t-s)} \tilde{f}(\tilde{x}(s,\tilde{x}_0,d)) \d s + \Phi_t(d)
\end{align}
for classical solutions $\tilde{x}(\cdot,\tilde{x}_0,d)$ with $(\tilde{x}_0,d) \in \mathcal{D}$ from~\eqref{eq:voc, kappa=0} above. 
In order to extend this equation 
to arbitrary $(\tilde{x}_0,d) \in \tilde{X} \times L^2_{\mathrm{loc}}([0,\infty),\R^k)$, we first extend the linear operators $\Phi_t$ from~\eqref{eq:Phi_t, def}. 

\begin{lm} \label{lm:S_lin admissible}
Suppose that Conditions~\ref{cond:S imp-passive}, \ref{cond:P und R}, \ref{cond:B-tilde surj} are satisfied. Then the linear operator $\Phi_t: C_c^2([0, \infty),\R^k) \to \tilde{X}$ defined by~\eqref{eq:Phi_t, def} can be (uniquely) extended to a bounded linear operator $\ol{\Phi}_t: L^2([0,\infty),\R^k) \to \tilde{X}$ for every $t \in [0,\infty)$ and
\begin{align}
\sup_{t\in[0,\tau]} \norm{\ol{\Phi}_t} \le \norm{\ol{\Phi}_{\tau}}
\qquad (\tau \in [0,\infty)).
\end{align}
\end{lm}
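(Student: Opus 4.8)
The plan is to identify $\Phi_t(d)$ with the state at time $t$ of the \emph{linear} boundary control system obtained from $\tilde{\mathfrak{S}}$ by discarding the nonlinearity $\tilde{f}$, and then to extract the bound from a dissipativity estimate. Indeed, setting $\tilde{f}=0$ and $\tilde{x}_0=0$ in the variation-of-constants formula~\eqref{eq:voc/mild sol, kappa=0} shows that, for $d\in C_c^2([0,\infty),\R^k)$ with $d(0)=0$, the map $s\mapsto \tilde{z}(s):=\Phi_s(d)$ is the unique classical solution of $\tilde{z}'=\tilde{\mathcal{A}}\tilde{z}$, $\tilde{\mathcal{B}}\tilde{z}=d$, $\tilde{z}(0)=0$ (here $d(0)=0$ is forced because $\Phi_0(d)=0$, so $\tilde{\mathcal{B}}\tilde{z}(0)=0$). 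Repeating the dissipativity computation from the proof of Lemma~\ref{lm:sgr gen with comp resolv}, but now retaining the boundary contribution, I would first record that $\scprd{\tilde{x},\tilde{\mathcal{A}}\tilde{x}}_{\tilde{X}}\le(\tilde{\mathcal{B}}\tilde{x})^\top\tilde{\mathcal{C}}\tilde{x}-\varsigma|\tilde{\mathcal{C}}\tilde{x}|^2$ for all $\tilde{x}\in D(\tilde{\mathcal{A}})$, using Condition~\ref{cond:S imp-passive} and $S_c\ge\varsigma$ from~\eqref{eq:S_c pos def}. Evaluating this along $\tilde{z}$, with $y:=\tilde{\mathcal{C}}\tilde{z}$, and applying Young's inequality gives $\dds{\tfrac12\norm{\tilde{z}(s)}^2}\le d(s)^\top y(s)-\varsigma|y(s)|^2\le\tfrac{1}{4\varsigma}|d(s)|^2$, and integrating from $0$ with $\tilde{z}(0)=0$ yields the key estimate
\[ \norm{\Phi_t(d)}^2=\norm{\tilde{z}(t)}^2\le\tfrac{1}{2\varsigma}\norm{d}_{[0,t],2}^2\le\tfrac{1}{2\varsigma}\norm{d}_2^2 . \]

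The main obstacle is that $\Phi_t$ must be controlled on all of $C_c^2$, including disturbances with $d(0)\ne0$, for which $\Phi_s(d)$ is no longer a zero-initial-state boundary control solution: its defining formula~\eqref{eq:Phi_t, def} then carries the spurious term $-\e^{\tilde{A}t}\tilde{R}d(0)$. The saving fact is that this term is exactly compensated by the contribution of $d'$ concentrated near $0$, and I would make this precise by a boundary-layer approximation. Fix $d\in C_c^2$ and put $d_n:=\chi_{1/n}d$, where $\chi_{1/n}$ is a smooth cutoff vanishing on $[0,1/n]$ and equal to $1$ beyond it, so that $d_n\in C_c^2$, $d_n(0)=0$, and $d_n\to d$ in $L^2$. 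Writing $e_n:=d-d_n$ (supported in $[0,1/n]$, with $e_n(0)=d(0)$), a direct estimate of $\Phi_t(e_n)$ from~\eqref{eq:Phi_t, def}---using $\int_0^{1/n}e_n'(s)\,\d s=-d(0)$, the compactness of $\{\tilde{R}c:|c|\le1\}$ (as $\ran\tilde{R}$ is finite-dimensional), and the strong continuity of $(\e^{\tilde{A}\tau})$ to handle $-\int_0^{1/n}\e^{\tilde{A}(t-s)}\tilde{R}e_n'(s)\,\d s\to\e^{\tilde{A}t}\tilde{R}d(0)$---shows $\Phi_t(e_n)\to0$. Hence $\Phi_t(d_n)\to\Phi_t(d)$ and the key estimate passes to the limit, so $\norm{\Phi_t(d)}\le\tfrac{1}{\sqrt{2\varsigma}}\norm{d}_{[0,t],2}$ for every $d\in C_c^2$. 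Since $C_c^2$ is dense in $L^2$, $\Phi_t$ extends uniquely to a bounded operator $\ol{\Phi}_t$ satisfying $\norm{\ol{\Phi}_t d}\le\tfrac{1}{\sqrt{2\varsigma}}\norm{d}_{[0,t],2}$; in particular $\ol{\Phi}_t d$ depends only on $d|_{[0,t]}$ (causality).

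For the monotonicity I would combine this causality with the time-invariance of the linear boundary control system. Given $0\le t\le\tau$ and $d\in C_c^\infty((0,\infty),\R^k)$, let $\Theta_a d$ denote the right-shift by $a:=\tau-t$; then $\Theta_a d\in C_c^\infty((0,\infty),\R^k)$ and $\norm{\Theta_a d}_2=\norm{d}_2$. The zero-initial-state solution driven by $\Theta_a d$ stays at $0$ on $[0,a]$ and thereafter coincides with the $d$-driven solution, so $\Phi_\tau(\Theta_a d)=\Phi_t(d)$ and therefore $\norm{\ol{\Phi}_t d}=\norm{\ol{\Phi}_\tau(\Theta_a d)}\le\norm{\ol{\Phi}_\tau}\,\norm{d}_2$. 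Because $C_c^\infty((0,\infty),\R^k)$ is dense in $L^2$ and hence computes $\norm{\ol{\Phi}_t}$, taking the supremum over such $d$ with $\norm{d}_2\le1$ gives $\norm{\ol{\Phi}_t}\le\norm{\ol{\Phi}_\tau}$ for all $t\le\tau$, which is exactly the claimed inequality $\sup_{t\in[0,\tau]}\norm{\ol{\Phi}_t}\le\norm{\ol{\Phi}_\tau}$.
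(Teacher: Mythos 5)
Your proof is correct, but it takes a genuinely different route from the paper's. The paper never leaves the nonlinear framework: for $d$ with $d(0)=0$ it writes $\Phi_{t,0}(d) = \tilde{x}(t,0,d) - \int_0^t \e^{\tilde{A}(t-s)}\tilde{f}(\tilde{x}(s,0,d))\,\d s$ and bounds both terms via the uniform-global-stability estimate of Theorem~\ref{thm:class solvb}~(ii); the resulting bound is nonlinear in $\norm{d}_2$ but uniform on the $L^2$-unit ball, which suffices for a \emph{linear} operator to be bounded, and the monotonicity then comes from the concatenation identity $\Phi_{t,0}(d)=\Phi_{\tau,0}(0\,\&_{\tau-t}\,d)$ --- exactly your right-shift argument in different notation. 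You instead stay entirely at the linear level: you identify $\Phi_\cdot(d)$ with the zero-initial-state trajectory of the linear boundary control system $\tilde{z}'=\tilde{\mathcal{A}}\tilde{z}$, $\tilde{\mathcal{B}}\tilde{z}=d$, prove the dissipation inequality $\scprd{\tilde{x},\tilde{\mathcal{A}}\tilde{x}}_{\tilde{X}}\le(\tilde{\mathcal{B}}\tilde{x})^\top\tilde{\mathcal{C}}\tilde{x}-\varsigma|\tilde{\mathcal{C}}\tilde{x}|^2$ (which is indeed correct --- it is the linear part of the energy computation~\eqref{eq:class solvb, derivative of energy}), and integrate with Young's inequality. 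This buys you strictly more than the lemma asserts: an explicit, $t$-independent bound $\norm{\ol{\Phi}_t}\le(2\varsigma)^{-1/2}$, hence infinite-time admissibility and causality, with the claimed monotonicity following trivially; it also makes the lemma independent of the nonlinear solution theory. Conversely, the paper's route recycles machinery already established (Theorem~\ref{thm:class solvb}) and so never has to discuss solvability of the linear boundary control problem separately. Your boundary-layer extension from $d(0)=0$ to general $d\in C_c^2$ is in substance the same as the paper's second step, with the cancellation of $-\e^{\tilde{A}t}\tilde{R}d(0)$ made explicit rather than hidden in the semigroup splitting. Two small repairs: a smooth cutoff cannot literally vanish on $[0,1/n]$ and equal $1$ immediately beyond it --- give it a transition interval, say $\chi_{1/n}=0$ on $[0,1/n]$ and $\chi_{1/n}=1$ on $[2/n,\infty)$ (this changes nothing in the estimates); and the identification of $\Phi_\cdot(d)$ with the classical solution of the linear problem is better justified by applying the transformation~\eqref{eq:Fattorini transformation} to the linear system and invoking Lemma~\ref{lm:sgr gen with comp resolv} and Condition~\ref{cond:B-tilde surj}, rather than by ``setting $\tilde{f}=0$'' in the nonlinear formula~\eqref{eq:voc/mild sol, kappa=0}, which was derived for a different equation.
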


\begin{proof}
In the entire proof, we use the short-hand notation
\begin{align}
R(\tilde{x}_0,d) := \ul{\sigma}(\norm{\tilde{x}_0}) + \ul{\gamma}(\norm{d}_2)
\end{align}
for $\tilde{x}_0 \in \tilde{X}$ and $d \in L^2([0,\infty),\R^k)$, where $\ul{\sigma}, \ul{\gamma} \in \mathcal{K}$ are chosen as in Theorem~\ref{thm:class solvb}. We also denote by $L(R)$ for $R \in [0,\infty)$ a Lipschitz constant of $\tilde{f}|_{\ol{B}_R(0)}$ such that $R \mapsto L(R)$ is continuous and monotonically increasing. Such a choice of Lipschitz constants exists because
\begin{align*}
R \mapsto L_0(R) := \min \big\{ L \in [0,\infty): L \text{ is a Lipschitz constant of } \tilde{f}|_{\ol{B}_R(0)} \big\} 
\end{align*}
is monotonically increasing and therefore obviously has a continuous monotonically increasing majorant. (See Lemma~2.5 of~\cite{ClLeSt98}.)
\smallskip

As a first step, we show that the restriction $\Phi_{t,0}$ of $\Phi_t$ to 
\begin{align*}
C_{c,0}^2([0,\infty),\R^k) := \big\{ d \in C_c^2([0,\infty),\R^k): d(0) = 0 \big\}
\end{align*}
can be (uniquely) extended to a bounded linear operator  $\ol{\Phi}_{t,0}: L^2([0,\infty),\R^k) \to \tilde{X}$ for every $t \in [0,\infty)$ and that 
\begin{align} \label{eq:S_lin admissible, 1}
\sup_{t\in[0,\tau] } \norm{ \ol{\Phi}_{t,0} } \le \norm{ \ol{\Phi}_{\tau,0} }
\end{align}
for every $\tau \in [0,\infty)$.
So let $t \in [0,\infty)$. We see by~\eqref{eq:voc, kappa=0} that
\begin{align} \label{eq:voc, kappa=0, x_0 = 0}
\Phi_{t,0}(d) = \tilde{x}(t,0,d) - \int_0^t \e^{\tilde{A}(t-s)} \tilde{f}(\tilde{x}(s,0,d)) \d s
\end{align}
for every $d \in C_{c,0}^2([0,\infty),\R^k)$ (because for such $d$ one has $(0,d) \in \mathcal{D}$). 
%
It follows by Theorem~\ref{thm:class solvb}~(ii) that
\begin{align}  \label{eq:S_lin admissible, 1.1}
\norm{\tilde{x}(t,0,d)} \le \ul{\gamma}(\norm{d}_{[0,t],2})
\end{align}
for all $d \in C_{c,0}^2([0,\infty),\R^k)$.
%
It also follows by Theorem~\ref{thm:class solvb}~(ii) that
\begin{align}  \label{eq:S_lin admissible, 1.2}
\norm{ \int_0^t \e^{\tilde{A}(t-s)} \tilde{f}(\tilde{x}(s,0,d)) \d s } \le L(R(0,d)) \, \ul{\gamma}(\norm{d}_{[0,t],2}) \, t
\end{align}
for all $d \in C_{c,0}^2([0,\infty),\R^k)$.
%
Combining now~\eqref{eq:voc, kappa=0, x_0 = 0}, \eqref{eq:S_lin admissible, 1.1} and~\eqref{eq:S_lin admissible, 1.2} we see that
\begin{align} \label{eq:absch Phi_t,0(d)}
\norm{\Phi_{t,0}(d)} \le \big( 1+L(R(0,d)) t \big) \,  \ul{\gamma}( \norm{d}_{[0,t],2} ) 
\end{align}
for all $d \in C_{c,0}^2([0,\infty),\R^k)$.
It follows 
that $$\Phi_{t,0}: C_{c,0}^2([0,\infty),\R^k) \to \tilde{X}$$ is a linear operator that is bounded w.r.t.~the norm of $L^2([0,\infty),\R^k)$ and therefore, by the density of $C_{c,0}^2([0,\infty),\R^k)$ in $L^2([0,\infty),\R^k)$, can be uniquely extended to a bounded linear operator $\ol{\Phi}_{t,0}: L^2([0,\infty),\R^k) \to \tilde{X}$. 
Since for every $\tau \ge t$ one has the elementary relation
\begin{align}
\Phi_{t,0}(d) = \Phi_{\tau,0}(0 \,\&_{\tau-t} \,d) \qquad (d \in C_{c,0}^2([0,\infty),\R^k))
\end{align}
(called composition property in~\cite{We89}), it further follows that
\begin{align*} 
&\sup_{t\in [0,\tau]} \norm{\ol{\Phi}_{t,0}} 
= \sup_{t\in [0,\tau]} \sup \{ \Phi_{t,0}(d): d \in C_{c,0}^2([0,\infty),\R^k) \text{ with } \norm{d}_2 \le 1 \big\} \notag \\
&\qquad \le 
\sup_{t\in [0,\tau]} \sup \{ \Phi_{\tau,0}(0 \, \&_{\tau-t} \, d): d \in C_{c,0}^2([0,\infty),\R^k) \text{ with } \norm{d}_2 \le 1 \big\}
\le 
\norm{ \ol{\Phi}_{\tau,0} }.
\end{align*}
In the relations above, $u\,\&_{\tau} \,v$ denotes the concatenation of two functions $u,v:[0,\infty) \to \R^k$ at time $\tau$, that is, $(u\,\&_{\tau} \,v)(s) := u(s)$ for $s \in [0,\tau)$ and $(u\,\&_{\tau} \,v)(s) := v(s-\tau)$ for $s \in [\tau,\infty)$. 
\smallskip

As a second step, we will show that also the non-restricted operator $\Phi_t$ can be (uniquely) extended to a bounded linear operator  $\ol{\Phi}_{t}: L^2([0,\infty),\R^k) \to \tilde{X}$ and that $\ol{\Phi}_{t} = \ol{\Phi}_{t,0}$ for every $t \in [0,\infty)$, which in conjunction with~\eqref{eq:S_lin admissible, 1} proves the lemma. 
In order to do so, we have only to show that 
\begin{align}  \label{eq:S_lin admissible, 2}
\ol{\Phi}_{t,0}(d) = \Phi_t(d)
\end{align} 
for every $d \in C_c^2([0,\infty),\R^k)$. So let $t \in (0,\infty)$ and $d \in C_c^2([0,\infty),\R^k)$ be fixed (for $t =0$ the claim is obvious). Choose a sequence $(d_n)$ in $C_{c,0}^2([0,\infty),\R^k)$ such that
\begin{align} \label{eq:d_n for extension from C_c,0^2 to C_c^2}
d_n\big|_{[t/n,\infty)} = d\big|_{[t/n,\infty)} \qquad \text{and} \qquad \sup_{n \in \N} \norm{d_n \big|_{[0,t/n)}}_{\infty} < \infty.
\end{align}
It then follows that $d_n \longrightarrow d$ in $L^2([0,\infty),\R^k)$. 
So, on the one hand
\begin{align}  \label{eq:S_lin admissible, 2.1}
\Phi_{t,0}(d_n) \longrightarrow \ol{\Phi}_{t,0}(d) \qquad (n \to \infty)
\end{align}
and on the other hand
\begin{align}   \label{eq:S_lin admissible, 2.2}
&\Phi_{t,0}(d_n) = \tilde{R}d_n(t) + e^{\tilde{A}(t-t/n)} \int_0^{t/n} \e^{\tilde{A}(t/n-s)} \big( \tilde{\mathcal{A}}\tilde{R} d_n(s) - \tilde{R}d_n'(s) \big) \d s \notag \\ 
&\qquad + \int_{t/n}^t e^{\tilde{A}(t-s)} \big( \tilde{\mathcal{A}}\tilde{R} d(s) - \tilde{R}d'(s) \big) \d s  \notag \\
&\quad =  \tilde{R}d_n(t) + e^{\tilde{A}(t-t/n)} \Big( \Phi_{t/n,0}(d_n) - \tilde{R}d_n(t/n) \Big) + \int_{t/n}^t e^{\tilde{A}(t-s)} \big( \tilde{\mathcal{A}}\tilde{R} d(s) - \tilde{R}d'(s) \big) \d s \notag \\
&\quad  \longrightarrow  \Phi_t(d) \qquad (n \to \infty),
\end{align}
where we used~\eqref{eq:absch Phi_t,0(d)} to get $\Phi_{t/n,0}(d_n) \longrightarrow 0$ as $n \to \infty$. Combining now~\eqref{eq:S_lin admissible, 2.1} and~\eqref{eq:S_lin admissible, 2.2} we finally obtain~\eqref{eq:S_lin admissible, 2}, as desired.
\end{proof}

It should be noticed that the above lemma means nothing but 
the (finite-time) admissibility of the linear boundary control system
\begin{align}
\tilde{x}' = \tilde{\mathcal{A}}\tilde{x} 
\qquad \text{with} \qquad
d(t) = \tilde{\mathcal{B}}\tilde{x}(t)
\end{align}
w.r.t.~inputs $d \in L^2([0,\infty),\R^k)$. 
With the lemma at hand, we can now prove the following approximation result.

\begin{thm} \label{thm:density and approximation}
Suppose that Conditions~\ref{cond:S imp-passive}, \ref{cond:P und R}, \ref{cond:B-tilde surj} are satisfied. Then for every $(\tilde{x}_0,d) \in \tilde{X} \times L^2_{\mathrm{loc}}([0,\infty),\R^k)$ one has:
\begin{itemize}
\item[(i)] there exists a sequence $(\tilde{x}_{0n},d_n)$ in $\mathcal{D}$ converging to $(\tilde{x}_0,d)$ in the locally convex topology of $\tilde{X} \times L^2_{\mathrm{loc}}([0,\infty),\R^k)$
\item[(ii)] for every such approximating 
sequence $(\tilde{x}_{0n},d_n)$ in $\mathcal{D}$, the corresponding sequence 
$( \tilde{x}(\cdot,\tilde{x}_{0n},d_n) )$ 
of classical solutions of~\eqref{eq:ivp closed-loop d ne 0} is a Cauchy sequence in the locally convex space $C([0,\infty),\R^k)$ and its limit is independent of the particular choice of the approximating sequence $(\tilde{x}_{0n},d_n)$. 
\end{itemize}
\end{thm}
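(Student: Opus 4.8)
The plan is to prove part~(i) by a density-plus-correction argument and part~(ii) by combining the variation-of-constants identity~\eqref{eq:voc, kappa=0}, the a~priori bound of Theorem~\ref{thm:class solvb}~(ii), the admissibility bound of Lemma~\ref{lm:S_lin admissible}, and Gr\"onwall's inequality.

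For part~(i), I would first use that $\tilde{A}$ is a generator, so that $D(\tilde{\mathcal{A}}) = D(\mathcal{A}) \times \R^{2m_c} \supseteq D(\tilde{A})$ is dense in $\tilde{X}$, and that $C_c^2([0,\infty),\R^k)$ is dense in $L^2_{\mathrm{loc}}([0,\infty),\R^k)$. So I would pick $\tilde{x}_{0n} \in D(\tilde{\mathcal{A}})$ with $\tilde{x}_{0n} \to \tilde{x}_0$ in $\tilde{X}$ and $\hat{d}_n \in C_c^2([0,\infty),\R^k)$ with $\hat{d}_n \to d$ in $L^2_{\mathrm{loc}}$. These need not satisfy the compatibility condition $\hat{d}_n(0) = \tilde{\mathcal{B}}\tilde{x}_{0n}$ built into $\mathcal{D}$, so I would correct $\hat{d}_n$ near $0$: setting $c_n := \tilde{\mathcal{B}}\tilde{x}_{0n} - \hat{d}_n(0)$ and $d_n := \hat{d}_n + c_n \phi_n$, where $\phi_n(s) := \phi(s/\eps_n)$ for a fixed $C_c^2$ bump $\phi$ with $\phi(0) = 1$ and support in $[0,1]$, and where $\eps_n \to 0$ is chosen so small that $|c_n|\,\norm{\phi_n}_2 \to 0$ (possible since $\norm{\phi_n}_2 = O(\sqrt{\eps_n})$). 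Then $d_n \in C_c^2$ with $d_n(0) = \tilde{\mathcal{B}}\tilde{x}_{0n}$, so $(\tilde{x}_{0n},d_n) \in \mathcal{D}$, and $d_n \to d$ in $L^2_{\mathrm{loc}}$, which proves~(i).

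For part~(ii), fix $\tau \in (0,\infty)$ and an approximating sequence $(\tilde{x}_{0n},d_n)$ in $\mathcal{D}$. By Theorem~\ref{thm:class solvb}~(ii), $\norm{\tilde{x}(t,\tilde{x}_{0n},d_n)} \le \ul{\sigma}(\norm{\tilde{x}_{0n}}) + \ul{\gamma}(\norm{d_n}_{[0,t],2})$, and since $\norm{\tilde{x}_{0n}}$ and $\norm{d_n}_{[0,\tau],2}$ are bounded uniformly in $n$, there is an $R_\tau$ with $\norm{\tilde{x}(t,\tilde{x}_{0n},d_n)} \le R_\tau$ for all $n$ and $t \in [0,\tau]$; hence on $[0,\tau]$ I may use the single Lipschitz constant $L(R_\tau)$ of $\tilde{f}|_{\ol{B}_{R_\tau}(0)}$ from Lemma~\ref{lm:sgr gen with comp resolv}~(ii). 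Subtracting~\eqref{eq:voc, kappa=0} for indices $n,l$ and writing $\delta_{nl}(t) := \norm{\tilde{x}(t,\tilde{x}_{0n},d_n) - \tilde{x}(t,\tilde{x}_{0l},d_l)}$, I would bound the three resulting terms by, respectively, $\norm{\tilde{x}_{0n}-\tilde{x}_{0l}}$ (contractivity of $\e^{\tilde{A}t}$), $L(R_\tau)\int_0^t \delta_{nl}(s)\d s$ (local Lipschitz continuity of $\tilde{f}$), and $\norm{\ol{\Phi}_\tau}\,\norm{d_n-d_l}_{[0,\tau],2}$ (admissibility). Gr\"onwall then yields $\sup_{t\in[0,\tau]}\delta_{nl}(t) \le (\norm{\tilde{x}_{0n}-\tilde{x}_{0l}} + \norm{\ol{\Phi}_\tau}\,\norm{d_n-d_l}_{[0,\tau],2})\,\e^{L(R_\tau)\tau} \to 0$ as $n,l \to \infty$. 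Since $\tau$ was arbitrary, the sequence is Cauchy in every seminorm of the Fr\'echet space $C([0,\infty),\tilde{X})$ and thus converges there. Independence of the limit from the approximating sequence then follows by interleaving any two such sequences into one, which still lies in $\mathcal{D}$ and converges to $(\tilde{x}_0,d)$, and applying the Cauchy property to the interleaved sequence.

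The delicate step is the bound on the $\Phi$-term. Because $d_n \to d$ only in $L^2_{\mathrm{loc}}$, the global distance $\norm{d_n-d_l}_2$ need not be small, so I cannot simply invoke $\norm{\ol{\Phi}_t}\,\norm{d_n-d_l}_2$; I must instead exploit the causality of $\Phi_t$, which is visible from~\eqref{eq:Phi_t, def} since $\Phi_t(d)$ depends on $d$ only through $d(0)$, $d(t)$ and integrals over $[0,t]$, to obtain the local bound $\norm{\ol{\Phi}_t(g)} \le \norm{\ol{\Phi}_\tau}\,\norm{g}_{[0,\tau],2}$ for $t \le \tau$. Concretely, for $g := d_n - d_l \in C_c^2$ I would approximate the truncation $g\,\chi_{[0,t]}$ in $L^2$ by $C_c^2$ functions agreeing with $g$ on $[0,t]$; by the causal structure of the formula these have the same image $\Phi_t(g)$ under $\Phi_t$, whence $\Phi_t(g) = \ol{\Phi}_t(g\,\chi_{[0,t]})$ and therefore $\norm{\Phi_t(g)} \le \norm{\ol{\Phi}_t}\,\norm{g}_{[0,t],2} \le \norm{\ol{\Phi}_\tau}\,\norm{g}_{[0,\tau],2}$, the last inequality using the monotonicity $\sup_{t\in[0,\tau]}\norm{\ol{\Phi}_t} \le \norm{\ol{\Phi}_\tau}$ from Lemma~\ref{lm:S_lin admissible}.
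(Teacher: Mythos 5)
Your proof is correct. For part~(ii) it follows essentially the paper's own route: subtract the variation-of-constants identities~\eqref{eq:voc/mild sol, kappa=0}, control the initial-data term by contractivity of $\e^{\tilde{A}t}$, the nonlinear term by a Lipschitz constant of $\tilde{f}$ obtained from the a~priori bound of Theorem~\ref{thm:class solvb}~(ii), the disturbance term by the admissibility bound of Lemma~\ref{lm:S_lin admissible}, and then apply Gr\"onwall; the paper records this as the two-data estimate~\eqref{eq:approx, estimate difference of class sol} and gets uniqueness of the limit by applying that estimate to pairs drawn from two different approximating sequences, which is equivalent to your interleaving argument. Two points differ genuinely. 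First, in part~(i) the paper avoids your bump-function correction entirely: it observes the sandwich $D(\tilde{A}) \times C_c^{\infty}((0,\infty),\R^k) \subset \mathcal{D} \subset \tilde{X} \times L^2_{\mathrm{loc}}([0,\infty),\R^k)$, since every $\tilde{x}_0 \in D(\tilde{A}) = D(\tilde{\mathcal{A}}) \cap \ker\tilde{\mathcal{B}}$ has $\tilde{\mathcal{B}}\tilde{x}_0 = 0$ and every $d$ supported in the open half-line has $d(0)=0$, so the compatibility condition reduces to $0=0$; density then follows from density of $D(\tilde{A})$ in $\tilde{X}$ and of $C_c^{\infty}((0,\infty),\R^k)$ in $L^2_{\mathrm{loc}}$. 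Your correction is more hands-on but equally valid (for each fixed $n$ the mismatch $c_n$ is a fixed finite vector, so $\eps_n$ can be chosen after it), and it even buys a little more flexibility, namely approximants whose state component ranges over all of $D(\tilde{\mathcal{A}})$ rather than just $\ker\tilde{\mathcal{B}}$. Second, your treatment of the ``delicate step'' -- that $\Phi_t$ sees only $d|_{[0,t]}$, so the Gr\"onwall input can be phrased with the local norm $\norm{d_n-d_l}_{[0,\tau],2}$ rather than the global $L^2$ norm -- makes explicit something the paper uses silently when it writes~\eqref{eq:approx, estimate difference of class sol}; your truncation-and-approximation proof of $\norm{\Phi_t(g)} \le \norm{\ol{\Phi}_t}\,\norm{g}_{[0,t],2}$ is exactly the right way to justify it, and it is indeed needed because the data converge only in $L^2_{\mathrm{loc}}$.
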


\begin{proof}
(i) Since $D(\tilde{A})$ is dense in $\tilde{X}$ (Lemma~\ref{lm:sgr gen with comp resolv}) and since $C_c^{\infty}((0,\infty),\R^k)$ is dense in $L_{\mathrm{loc}}^2([0,\infty),\R^k)$, we see that
$$D(\tilde{A}) \times C_c^{\infty}((0,\infty),\R^k)$$
is dense in 
$\tilde{X} \times L_{\mathrm{loc}}^2([0,\infty),\R^k)$. Since, moreover, 
\begin{align*}
D(\tilde{A}) \times C_c^{\infty}((0,\infty),\R^k) 
\subset \mathcal{D}
\subset \tilde{X} \times L_{\mathrm{loc}}^2([0,\infty),\R^k),
\end{align*}
we see that  $\mathcal{D}$ is a fortiori dense in $\tilde{X} \times L_{\mathrm{loc}}^2([0,\infty),\R^k)$.
\smallskip

(ii) As in the proof of Lemma~\ref{lm:S_lin admissible}, choose Lipschitz constants $L(R)$ of $\tilde{f}|_{\ol{B}_R(0)}$ such that $R \mapsto L(R)$ is continuous and monotonically increasing. 
We show that for arbitrary $(\tilde{x}_{01},d_1), (\tilde{x}_{02},d_2) \in \mathcal{D}$ and $\tau \in [0,\infty)$ one has the estimate
\begin{align} \label{eq:approx, estimate difference of class sol}
\norm{ \tilde{x}(\cdot, \tilde{x}_{01},d_1) - \tilde{x}(\cdot, \tilde{x}_{02},d_2) }_{[0,\tau],\infty}
&\le 
\Big( \norm{ \tilde{x}_{01}-\tilde{x}_{02} } + \norm{\ol{\Phi}_{\tau}} \norm{d_1-d_2}_{[0,\tau],2} \Big) \cdot \notag \\
&\qquad \qquad \cdot \exp\big( L\big(R_{\tau}(\tilde{x}_{01},d_1,\tilde{x}_{02},d_2)\big) \tau \Big),
\end{align}
where $R_{\tau}(\tilde{x}_{01},d_1,\tilde{x}_{02},d_2) := R_{\tau}(\tilde{x}_{01},d_1) + R_{\tau}(\tilde{x}_{02},d_2)$ and
\begin{align}
R_{\tau}(\tilde{x}_{0},d) := \ul{\sigma}(\norm{\tilde{x}_0}) + \ul{\gamma}( \norm{d}_{[0,\tau],2} )
\end{align}
for $(\tilde{x}_0,d) \in \tilde{X} \times L^2_{\mathrm{loc}}([0,\infty),\R^k)$ with $\ul{\sigma}, \ul{\gamma}$ as in Theorem~\ref{thm:class solvb}. 
So let $(\tilde{x}_{01},d_1), (\tilde{x}_{02},d_2) \in \mathcal{D}$ and $\tau \in [0,\infty)$. It then follows from~\eqref{eq:voc, kappa=0} using Lemma~\ref{lm:sgr gen with comp resolv}, Theorem~\ref{thm:class solvb} and Lemma~\ref{lm:S_lin admissible} that
\begin{align}
&\norm{ \tilde{x}(t, \tilde{x}_{01},d_1) - \tilde{x}(t, \tilde{x}_{02},d_2) }
\le 
\norm{ \tilde{x}_{01}-\tilde{x}_{02} } + \norm{\ol{\Phi}_{\tau}} \norm{d_1-d_2}_{[0,\tau],2} \notag \\
&\qquad \qquad \qquad + L\big(R_{\tau}(\tilde{x}_{01},d_1,\tilde{x}_{02},d_2)\big) \int_0^t \norm{ \tilde{x}(s, \tilde{x}_{01},d_1) - \tilde{x}(s, \tilde{x}_{02},d_2) } \d s
\end{align}
for every $t \in [0,\tau]$. So, by Gr\"onwall's lemma, the claimed estimate~\eqref{eq:approx, estimate difference of class sol}  follows. 
And from this, in turn, assertion~(ii) immediately follows because
$\big( \tilde{X} \times L^2_{\mathrm{loc}}([0,\infty),\R^k) \big)^2 \ni (\tilde{x}_{01},d_1,\tilde{x}_{02},d_2) \mapsto R_{\tau}(\tilde{x}_{01},d_1,\tilde{x}_{02},d_2)$ is continuous. 
\end{proof}

As a consequence of the above theorem, for every generalized datum $(\tilde{x}_0,d) \in \tilde{X}\times L^2_{\mathrm{loc}}([0,\infty),\R^k)$, 
the assignment 
\begin{align} \label{eq:general sol, def}
\tilde{x}(\cdot,\tilde{x}_0,d) := \lim_{n\to\infty} \tilde{x}(\cdot,\tilde{x}_{0n},d_n) \in C([0,\infty), \tilde{X}) 
\end{align}
with $(\tilde{x}_{0n},d_n)$ being an arbitrary approximating sequence in $\mathcal{D}$ for $(\tilde{x}_0,d)$, yields 
a well-defined function. 
%
%
We call this function -- following~\cite{TuWe14} -- the \emph{generalized solution} of the closed-loop system~\eqref{eq:ivp closed-loop d ne 0} corresponding to $(\tilde{x}_0,d)$ 
because it obviously coincides with the classical solution for $(\tilde{x}_0,d) \in \mathcal{D}$ (and with the mild solution for $(\tilde{x}_0,d) \in \tilde{X}\times W^{1,2}_{\mathrm{loc}}([0,\infty),\R^k)$) and because, by the next corollary, it shares many important properties with classical solutions. In particular, the generalized solutions of our closed-loop system satisfy the axioms from~\cite{MiWi16a} (Definition~1). 

\begin{cor} \label{cor:general sol, properties}
Suppose that Conditions~\ref{cond:S imp-passive}, \ref{cond:B-tilde surj}, \ref{cond:P und R} are satisfied. Then 
\begin{itemize}
\item[(i)] the generalized solution map $(\tilde{x}_0,d) \mapsto \tilde{x}(\cdot,\tilde{x}_0,d)$ satisfies the cocycle (or flow) property, that is,
\begin{align*}
\tilde{x}(t+s,\tilde{x}_0,d) = \tilde{x}(t, \tilde{x}(s,\tilde{x}_0,d), d(s+\cdot))
\end{align*}
for all $s,t \in [0,\infty)$ and all $(\tilde{x}_0,d) \in \tilde{X}\times L^2_{\mathrm{loc}}([0,\infty),\R^k)$
\item[(ii)] the generalized solution map  $(\tilde{x}_0,d) \mapsto \tilde{x}(\cdot,\tilde{x}_0,d) \in C([0,\infty),\tilde{X})$ is  continuous and causal. 
\end{itemize}
\end{cor}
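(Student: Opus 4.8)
The plan is to prove part~(ii) first and then deduce part~(i) from it. The workhorse for~(ii) is the Lipschitz-type estimate~\eqref{eq:approx, estimate difference of class sol}, which so far is available only for classical data in $\mathcal{D}$, so the first task is to extend it to arbitrary generalized data $(\tilde{x}_{01},d_1),(\tilde{x}_{02},d_2) \in \tilde{X} \times L^2_{\mathrm{loc}}([0,\infty),\R^k)$. To this end I pick approximating sequences in $\mathcal{D}$ (which exist by Theorem~\ref{thm:density and approximation}(i)), write down~\eqref{eq:approx, estimate difference of class sol} for the approximants, and pass to the limit: the left-hand sup-norm converges because the classical solutions converge uniformly on compacts to the respective generalized solutions by the very definition~\eqref{eq:general sol, def}, the two norm-difference terms on the right converge by continuity of the norms, and the exponential factor converges because $R_{\tau}$ — and hence $L(R_{\tau})$ — depends continuously on the data, as already observed at the end of the proof of Theorem~\ref{thm:density and approximation}. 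This yields~\eqref{eq:approx, estimate difference of class sol} verbatim for generalized data.

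With the extended estimate in hand, continuity is immediate: if $(\tilde{x}_{0n},d_n) \to (\tilde{x}_0,d)$ in $\tilde{X} \times L^2_{\mathrm{loc}}([0,\infty),\R^k)$, then for each fixed $\tau$ the right-hand side of~\eqref{eq:approx, estimate difference of class sol} (applied to the pair $(\tilde{x}_{0n},d_n)$ and $(\tilde{x}_0,d)$) tends to zero, since the prefactor vanishes while the exponential factor stays bounded (because $R_{\tau}$ is continuous along the convergent sequence). Hence $\tilde{x}(\cdot,\tilde{x}_{0n},d_n) \to \tilde{x}(\cdot,\tilde{x}_0,d)$ uniformly on every $[0,\tau]$, i.e.~in the locally convex topology of $C([0,\infty),\tilde{X})$. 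Causality is just as quick: fixing the same initial state $\tilde{x}_0$ and two disturbances agreeing on $[0,t]$, the term $\norm{d_1-d_2}_{[0,t],2}$ vanishes, so evaluating~\eqref{eq:approx, estimate difference of class sol} at $\tau = t$ forces the two generalized solutions to coincide on $[0,t]$.

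For the cocycle property~(i) I would first establish it for classical data $(\tilde{x}_0,d) \in \mathcal{D}$ by a uniqueness argument. The differential equation and the boundary side condition in~\eqref{eq:ivp closed-loop d ne 0} are time-autonomous in the state, so for fixed $s \ge 0$ the shifted function $t \mapsto \tilde{x}(t+s,\tilde{x}_0,d)$ is again a classical solution, now with initial state $\tilde{x}(s,\tilde{x}_0,d)$ and disturbance $d(s+\cdot)$; here one checks that $(\tilde{x}(s,\tilde{x}_0,d),d(s+\cdot)) \in \mathcal{D}$, using $\tilde{x}(s,\tilde{x}_0,d) \in D(\tilde{\mathcal{A}})$, the compactly supported $C^2$-regularity of $d(s+\cdot)$, and the compatibility $d(s) = \tilde{\mathcal{B}}\tilde{x}(s,\tilde{x}_0,d)$ that holds along every classical solution. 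Uniqueness (Theorem~\ref{thm:class solvb}(i)) then identifies this shifted solution with $\tilde{x}(t,\tilde{x}(s,\tilde{x}_0,d),d(s+\cdot))$, which is exactly the cocycle identity.

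To pass to generalized data, I approximate $(\tilde{x}_0,d)$ by $(\tilde{x}_{0n},d_n) \in \mathcal{D}$ and let $n \to \infty$ in the classical cocycle identity. Its left-hand side converges to $\tilde{x}(t+s,\tilde{x}_0,d)$ by definition. For the right-hand side I use that the shifted data converge, namely $\tilde{x}(s,\tilde{x}_{0n},d_n) \to \tilde{x}(s,\tilde{x}_0,d)$ in $\tilde{X}$ (from convergence in $C([0,\infty),\tilde{X})$, evaluated at time $s$) and $d_n(s+\cdot) \to d(s+\cdot)$ in $L^2_{\mathrm{loc}}([0,\infty),\R^k)$ (a mere change of variables in the integral defining the $L^2$-norm), and then invoke the continuity from~(ii) to conclude $\tilde{x}(t,\tilde{x}(s,\tilde{x}_{0n},d_n),d_n(s+\cdot)) \to \tilde{x}(t,\tilde{x}(s,\tilde{x}_0,d),d(s+\cdot))$. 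I expect the main subtlety to be precisely this dependence of~(i) on~(ii): the cocycle property for generalized data cannot be obtained by uniqueness alone and genuinely needs the continuous dependence, so the order of the proof matters. A secondary point deserving care is that the extended estimate must survive the limit with its constants under control — in particular that $R_{\tau}$, and thus $L(R_{\tau})$, stays bounded along the approximating sequences — but this is guaranteed by the continuity of $R_{\tau}$ already exploited in Theorem~\ref{thm:density and approximation}.
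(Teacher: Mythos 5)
Your proposal is correct and follows essentially the same route as the paper's (very terse) proof: the paper likewise obtains the continuity part of (ii) by extending the estimate~\eqref{eq:approx, estimate difference of class sol} to generalized solutions, and obtains (i) and causality by approximation from the corresponding properties of classical solutions. One small remark: your claim that (i) \emph{genuinely needs} the continuity from (ii) is slightly overstated, since the shifted approximants $\big(\tilde{x}(s,\tilde{x}_{0n},d_n),\, d_n(s+\cdot)\big)$ lie in $\mathcal{D}$ and converge to the shifted data, so the convergence of the right-hand side of the cocycle identity already follows from the well-definedness of generalized solutions (Theorem~\ref{thm:density and approximation}(ii) together with~\eqref{eq:general sol, def}), without invoking the full continuity statement of (ii).
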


\begin{proof}
Assertion~(i) and the causality part of assertion~(ii) easily follow by approximation from the cocycle property and the causality of classical solutions. Similarly, the continuity part of assertion~(ii) follows by extending the estimate~\eqref{eq:approx, estimate difference of class sol} to generalized solutions. 
\end{proof}

It can be shown that the closed-loop system, in addition to having a generalized solution, also has a generalized output $y(\cdot,\tilde{x}_0,d) \in L^2_{\mathrm{loc}}([0,\infty),\R^k)$ for every $(\tilde{x}_0,d) \in \tilde{X}\times L^2_{\mathrm{loc}}([0,\infty),\R^k)$ and that the generalized output map
\begin{align*}
\tilde{X}\times L^2_{\mathrm{loc}}([0,\infty),\R^k) \ni (\tilde{x}_0,d) \mapsto y(\cdot,\tilde{x}_0,d) \in L^2_{\mathrm{loc}}([0,\infty),\R^k)
\end{align*}
is continuous in the respective locally convex topology. In particular, the closed-loop system is well-posed in the spirit of~\cite{TuWe14} -- but we will not need this in the sequel. See~\cite{Sc18-wp-semilin-bdry-contr} for proofs and general well-posedness results.

\section{Stability of the closed-loop system} \label{sect:stab}

After having established 
the global solvability of the closed-loop system, we can now move on to stability. A first very simple result is the following uniform global stability theorem. We recall from~\cite{MiWi16a} that the system $\tilde{\mathfrak{S}}$ is called \emph{uniformly globally stable} iff there exist comparison functions $\ul{\sigma}, \ul{\gamma} \in \mathcal{K}$ such that for every $\tilde{x}_0 \in \tilde{X}$ and $d \in L^2([0,\infty),\R^k)$ 
\begin{align} \label{eq:UGS def}
\norm{\tilde{x}(t,\tilde{x}_0,d)} \le \ul{\sigma}(\norm{\tilde{x}_0}) + \ul{\gamma}(\norm{d}_2) 
\qquad (t \in [0,\infty)).
\end{align}

\begin{thm} \label{thm:UGS}
Suppose  that the assumptions (Conditions~\ref{cond:S imp-passive}, \ref{cond:P und R}, \ref{cond:B-tilde surj}) of the solvability  theorems are satisfied. Then the closed-loop system $\tilde{\mathfrak{S}}$ is uniformly globally stable. 
\end{thm}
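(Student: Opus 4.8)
The plan is to recognize that uniform global stability is, up to two cosmetic differences, already contained in Theorem~\ref{thm:class solvb}~(ii): that theorem supplies the very estimate~\eqref{eq:UGS def} but only for \emph{classical} data $(\tilde{x}_0,d) \in \mathcal{D}$ and with the truncated norm $\norm{d}_{[0,t],2}$ in place of the full norm $\norm{d}_2$. So I would take the very comparison functions $\ul{\sigma}, \ul{\gamma} \in \mathcal{K}$ produced in Theorem~\ref{thm:class solvb} and show that the same pair already works for all generalized data.

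First, for classical data I would simply observe that $\norm{d}_{[0,t],2} \le \norm{d}_2$ for every $t \in [0,\infty)$ and that $\ul{\gamma}$ is monotonically increasing, so that Theorem~\ref{thm:class solvb}~(ii) immediately upgrades to
\[
\norm{\tilde{x}(t,\tilde{x}_0,d)} \le \ul{\sigma}(\norm{\tilde{x}_0}) + \ul{\gamma}(\norm{d}_2) \qquad ((\tilde{x}_0,d) \in \mathcal{D},\ t \in [0,\infty)).
\]
Next, for an arbitrary generalized datum $(\tilde{x}_0,d) \in \tilde{X} \times L^2([0,\infty),\R^k) \subset \tilde{X} \times L^2_{\mathrm{loc}}([0,\infty),\R^k)$ I would invoke Theorem~\ref{thm:density and approximation}~(i) to pick a sequence $(\tilde{x}_{0n},d_n)$ in $\mathcal{D}$ converging to $(\tilde{x}_0,d)$ in $\tilde{X} \times L^2_{\mathrm{loc}}([0,\infty),\R^k)$, apply the displayed classical estimate to each $(\tilde{x}_{0n},d_n)$, and then pass to the limit $n \to \infty$ with $t$ held fixed.

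For this limit passage I would use three convergences, all already available: (a) $\tilde{x}(t,\tilde{x}_{0n},d_n) \to \tilde{x}(t,\tilde{x}_0,d)$ by the very definition~\eqref{eq:general sol, def} of the generalized solution together with the locally uniform convergence of Theorem~\ref{thm:density and approximation}~(ii); (b) $\norm{\tilde{x}_{0n}} \to \norm{\tilde{x}_0}$ by norm convergence in $\tilde{X}$; and (c) $\norm{d_n}_{[0,t],2} \to \norm{d}_{[0,t],2} \le \norm{d}_2$ by the $L^2_{\mathrm{loc}}$-convergence on the fixed compact interval $[0,t]$. Since $\ul{\sigma}$ and $\ul{\gamma}$ are continuous, taking limits yields~\eqref{eq:UGS def} for $(\tilde{x}_0,d)$, and since $t \in [0,\infty)$ was arbitrary this finishes the proof.

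I do not expect a genuine obstacle here: the theorem is essentially a corollary of Theorem~\ref{thm:class solvb}~(ii) propagated through the approximation machinery of Section~\ref{sect:solvb generalized}. The only point that demands a little care is the limit passage itself, and in particular the order of the two estimates in (c): one should first estimate the approximants against the truncated norm $\norm{d_n}_{[0,t],2}$ (which is exactly what $L^2_{\mathrm{loc}}$-convergence controls) and only afterwards dominate $\norm{d}_{[0,t],2}$ by $\norm{d}_2$; one must not attempt to force $\norm{d_n}_2 \to \norm{d}_2$, since the $L^2_{\mathrm{loc}}$-approximation of Theorem~\ref{thm:density and approximation}~(i) does not guarantee convergence of the full $L^2$-norms.
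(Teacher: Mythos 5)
Your proposal is correct and is essentially the paper's own argument: the paper proves Theorem~\ref{thm:UGS} in one line as ``an immediate consequence of Theorem~\ref{thm:class solvb}~(ii) and~\eqref{eq:general sol, def}'', and your write-up simply makes explicit the limit passage (classical estimate on approximating data from $\mathcal{D}$, then convergence of $\tilde{x}(t,\tilde{x}_{0n},d_n)$, $\norm{\tilde{x}_{0n}}$, and $\norm{d_n}_{[0,t],2}$ together with continuity of $\ul{\sigma},\ul{\gamma}$) that the paper leaves implicit. Your cautionary remark about working with the truncated norms $\norm{d_n}_{[0,t],2}$ rather than the full $L^2$-norms of the approximants is exactly the right way to handle the $L^2_{\mathrm{loc}}$-topology of Theorem~\ref{thm:density and approximation}.
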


\begin{proof}
An immediate consequence of Theorem~\ref{thm:class solvb}~(ii) and~\eqref{eq:general sol, def}. 
\end{proof}

We are now going to improve this 
result to a (uniform) input-to-state stability result (Section~\ref{sect:ISS}) and to a weak input-to-state stability result (Section~\ref{sect:wISS}), respectively.

\subsection{Input-to-state stability of the closed-loop system} \label{sect:ISS}

In this section we show that, for systems $\mathfrak{S}$ of order $N=1$ and for a special class of controllers $\mathfrak{S}_c$, the closed-loop system $\tilde{\mathfrak{S}}$ is (uniformly) input-to-state stable. We recall that $\tilde{\mathfrak{S}}$ is \emph{(uniformly) input-to-state stable} w.r.t.~inputs from $L^2([0,\infty),\R^k)$ iff it is uniformly globally stable and of uniform asymptotic gain. See~\cite{MiWi16a} for this and other characterizations of input-to-state stability. In this context, the \emph{uniform asymptotic gain} property by definition means the following: there is a function $\ol{\gamma} \in \mathcal{K} \cup \{0\}$ 
such that for every $\eps, r > 0$ there is a time $\ol{\tau} = \ol{\tau}(\eps,r)$ such that for every $\tilde{x}_0 \in \tilde{X}$ with $\norm{\tilde{x}_0} \le r$ and every $d \in L^2([0,\infty),\R^k)$
\begin{align} \label{eq:UAG def}
\norm{\tilde{x}(t,\tilde{x}_0,d)} \le \eps + \ol{\gamma}(\norm{d}_2)  
\qquad (t \ge \ol{\tau}).
\end{align}
A function $\ol{\gamma}$ as above is called a \emph{uniform (asymptotic) gain (function)} for $\tilde{\mathfrak{S}}$. 

\begin{lm} \label{lm:ISS implies conv to 0}
If the assumptions (Conditions~\ref{cond:S imp-passive}, \ref{cond:P und R}, \ref{cond:B-tilde surj}) of the solvability results are satisfied and $\tilde{\mathfrak{S}}$ is uniformly input-to-state stable w.r.t.~inputs from $L^2([0,\infty),\R^k)$, then for every $(\tilde{x}_0,d) \in \tilde{X} \times L^2([0,\infty),\R^k)$ one has
\begin{align} \label{eq:ISS conv to 0}
\tilde{x}(t,\tilde{x}_0,d) \longrightarrow 0 \qquad (t \to \infty). 
\end{align}
\end{lm}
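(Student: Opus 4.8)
The plan is to derive the convergence~\eqref{eq:ISS conv to 0} from the uniform asymptotic gain property~\eqref{eq:UAG def} by a restarting argument based on the cocycle property, exploiting the fact that $d$ is square-integrable over the \emph{entire} half-line. First I would fix $(\tilde{x}_0,d) \in \tilde{X}\times L^2([0,\infty),\R^k)$ and, using uniform global stability (Theorem~\ref{thm:UGS}), record the uniform bound $\norm{\tilde{x}(t,\tilde{x}_0,d)} \le r := \ul{\sigma}(\norm{\tilde{x}_0}) + \ul{\gamma}(\norm{d}_2)$ valid for all $t \in [0,\infty)$. In particular, every restarted state $\tilde{x}(s,\tilde{x}_0,d)$ lies in the ball $\ol{B}_r(0)$, a fact that will be decisive below.

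The key observation is that, although the gain term $\ol{\gamma}(\norm{d}_2)$ in~\eqref{eq:UAG def} does not vanish, it can be forced to be arbitrarily small by replacing $d$ with a time-shifted disturbance. Indeed, the shifted input $d(s+\cdot)$ satisfies $\norm{d(s+\cdot)}_2^2 = \int_s^{\infty} |d(r)|^2 \d r \longrightarrow 0$ as $s \to \infty$, because $d \in L^2([0,\infty),\R^k)$; and by the cocycle property (Corollary~\ref{cor:general sol, properties}~(i)) the solution with this shifted input is precisely the time-translate of the original solution, $\tilde{x}(t', \tilde{x}(s,\tilde{x}_0,d), d(s+\cdot)) = \tilde{x}(t'+s,\tilde{x}_0,d)$.

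Concretely, given $\eps > 0$, I would first use that $\ol{\gamma} \in \mathcal{K}\cup\{0\}$ is continuous with $\ol{\gamma}(0)=0$ to choose $s$ so large that $\ol{\gamma}(\norm{d(s+\cdot)}_2) \le \eps/2$. Since $\norm{\tilde{x}(s,\tilde{x}_0,d)} \le r$, the uniform asymptotic gain estimate~\eqref{eq:UAG def} applied to the datum $\big(\tilde{x}(s,\tilde{x}_0,d), d(s+\cdot)\big)$ with $\eps/2$ in place of $\eps$ then supplies a time $\ol{\tau} = \ol{\tau}(\eps/2,r)$ with $\norm{\tilde{x}(t', \tilde{x}(s,\tilde{x}_0,d), d(s+\cdot))} \le \eps/2 + \ol{\gamma}(\norm{d(s+\cdot)}_2) \le \eps$ for all $t' \ge \ol{\tau}$. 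Rewriting the left-hand side via the cocycle property as $\tilde{x}(t'+s,\tilde{x}_0,d)$ yields $\norm{\tilde{x}(t,\tilde{x}_0,d)} \le \eps$ for all $t \ge s + \ol{\tau}$, and since $\eps$ was arbitrary this is exactly~\eqref{eq:ISS conv to 0}. The one point requiring care — and really the heart of the matter — is the \emph{uniformity in the initial state} built into~\eqref{eq:UAG def}: it is because the asymptotic gain time $\ol{\tau}$ depends on the initial state only through the radius $r$ that the restart at the late time $s$ is admissible, the restarted states all remaining in the fixed ball $\ol{B}_r(0)$ by uniform global stability. Everything else is a routine $\eps$-argument.
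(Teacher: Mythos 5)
Your proposal is correct and follows essentially the same route as the paper's proof: both use uniform global stability to confine the restarted states $\tilde{x}(s,\tilde{x}_0,d)$ to a fixed ball $\ol{B}_r(0)$, the square-integrability of $d$ to make $\ol{\gamma}(\norm{d(s+\cdot)}_2)$ small, and then the uniform asymptotic gain estimate combined with the cocycle property applied to the restarted data. The only differences are cosmetic (an $\eps/2$-splitting versus the paper's final bound $2\eps$, and the order in which the shift time and the gain time are chosen).
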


\begin{proof}
Choose $\eps > 0$ and $(\tilde{x}_0,d) \in \tilde{X} \times L^2([0,\infty),\R^k)$. 
Since $\tilde{\mathfrak{S}}$ is uniformly globally stable and of uniform asymptotic gain, 
there exist $\ul{\sigma}, \ul{\gamma} \in \mathcal{K}$ as in~\eqref{eq:UGS def} and a uniform gain function $\ol{\gamma} \in \mathcal{K} \cup \{0\}$ along with a time  $\ol{\tau}_0 = \ol{\tau}_0(\eps, \ul{\sigma}(\norm{\tilde{x}_0})+ \ul{\gamma}(\norm{d}_2))$ such that
\begin{align} \label{eq:ISS conv to 0, 1}
\big\| \tilde{x}(t,\tilde{\xi}_0,d) \big\| \le \eps + \ol{\gamma}(\norm{d}_2)
\end{align}
for every $\tilde{\xi}_0 \in \tilde{X}$ with $\|\tilde{\xi}_0 \| \le \ul{\sigma}(\norm{\tilde{x}_0})+\ul{\gamma}(\norm{d}_2)$ and every $t \ge \ol{\tau}_0$. 
Since moreover $d \in L^2([0,\infty),\R^k)$, there exists a time $t_0 = t_0(\eps,d)$ such that
\begin{align} \label{eq:ISS conv to 0, 2}
\ol{\gamma}(\norm{d(t_0+\cdot)}_2) < \eps.
\end{align}
Setting now $\ol{\tau} := \ol{\tau}(\eps,\norm{\tilde{x}_0},d) := \ol{\tau}_0+t_0$, we see by the cocycle property and by~\eqref{eq:UGS def}, \eqref{eq:ISS conv to 0, 1}, \eqref{eq:ISS conv to 0, 2} that 
$\norm{ \tilde{x}(t',\tilde{x}_0,d) } < 2\eps$ for all $t' \ge \ol{\tau}$. And so \eqref{eq:ISS conv to 0} ensues.
\end{proof}

In order to achieve input-to-state stability, we add the following conditions on the system $\mathfrak{S}$ and the controller $\mathfrak{S}_c$ to the assumptions from the solvability results. 

\begin{cond} \label{cond:ISS, system} 
$\mathfrak{S}$ is of order $N = 1$ and $\zeta \mapsto \mathcal{H}(\zeta)$ is absolutely continuous. Additionally, there exists a constant $\kappa > 0$ such that for $\eta = a$ or $\eta = b$ one has
\begin{align} \label{eq:ISS, matrungl}
|\mathcal{B}x|^2 + |\mathcal{C}x|^2 \ge \kappa |(\mathcal{H}x)(\eta)|^2 
\qquad (x \in D(\mathcal{A})).
\end{align}
\end{cond}

\begin{cond} \label{cond:ISS, controller}
\begin{itemize}
\item[(i)] $\mathcal{P}$ is quasi-quadratic in the sense that for some constants $\ul{c}_1, \ol{c}_1 >0$
\begin{align*}
\ol{c}_1 v_1^{\top} \nabla \mathcal{P}(v_1) \ge  \mathcal{P}(v_1) \ge \ul{c}_1 |v_1|^2 \qquad (v_1 \in \R^{m_c})
\end{align*}
\item[(ii)] $\mathcal{R}$ is quasi-linear in the sense that for some constants $\ul{c}_2, \ol{c}_2 >0$
\begin{align*}
\ol{c}_2 v_2^{\top} \mathcal{R}(v_2) \ge   |v_2|^2 \ge \ul{c}_2 |\mathcal{R}(v_2)|^2 \qquad (v_2 \in \R^{m_c}).
\end{align*}
\end{itemize}
\end{cond}

Condition~\ref{cond:ISS, system} can be viewed as an approximate observability condition on our system $\mathfrak{S}$ by virtue of Lemma~\ref{lm:hinr bed fuer klass appr beobb-vor an S aus wISS-satz} below.

\begin{thm} \label{thm:ISS}
Suppose  that the assumptions (Conditions~\ref{cond:S imp-passive}, \ref{cond:P und R}, \ref{cond:B-tilde surj}) of the solvability  theorems are satisfied along with Conditions~\ref{cond:ISS, system} and~\ref{cond:ISS, controller}. 
Then the closed-loop system $\tilde{\mathfrak{S}}$ is input-to-state stable and the function~$\ol{\gamma}$, given by $\ol{\gamma}(r) = \ul{\psi}^{-1}(2C r^2)$ with arbitrary $C > 1/(4\varsigma)$ and with $\varsigma$, $\ul{\psi}$ as in~\eqref{eq:S_c pos def} and~\eqref{eq:E-tilde equiv to norm}, is a uniform asymptotic gain for $\tilde{\mathfrak{S}}$. 
In particular, 
\begin{align}
\tilde{x}(t,\tilde{x}_0,d) \longrightarrow 0 \qquad (t \to \infty)
\end{align}
for every $\tilde{x}_0 \in \tilde{X}$ and $d \in L^2([0,\infty),\R^k)$.
\end{thm}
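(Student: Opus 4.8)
The plan is to prove the two defining ingredients of uniform input-to-state stability separately. Uniform global stability is already granted by Theorem~\ref{thm:UGS}, so the whole task reduces to verifying the uniform asymptotic gain property~\eqref{eq:UAG def}; once that is in hand, the convergence $\tilde{x}(t,\tilde{x}_0,d)\to 0$ is automatic by Lemma~\ref{lm:ISS implies conv to 0}. Since the energies are differentiable only along classical solutions, I would derive every estimate first for classical data $(\tilde{x}_0,d)\in\mathcal{D}$ and extend it to arbitrary $(\tilde{x}_0,d)\in\tilde{X}\times L^2([0,\infty),\R^k)$ only at the end, using the density of $\mathcal{D}$ and the continuity of the generalized solution map (Theorem~\ref{thm:density and approximation} and Corollary~\ref{cor:general sol, properties}) together with the continuity of the energy.

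The core is to sharpen the mere energy balance behind~\eqref{eq:UGS, energy, class} into a strict dissipation estimate $\ddt{\tilde{V}(\tilde{x}(t))} \le -\mu\,\tilde{V}(\tilde{x}(t)) + C|d(t)|^2$ for a functional $\tilde{V}$ equivalent to $\tilde{E}$. The basic inequality~\eqref{eq:class solvb, derivative of energy, estimate} already dissipates the output $y=\mathcal{C}x$ and, through the quasi-linearity of $\mathcal{R}$ (Condition~\ref{cond:ISS, controller}~(ii)), the controller velocity $v_2$; it controls neither the controller position $v_1$ nor the internal energy $E(x)$ of the partial differential equation. To repair this I would set $\tilde{V} := \tilde{E} + \eps\,W$ with a small $\eps>0$ and a correction $W$ built from two pieces: a finite-dimensional cross term of the form $v_1^{\top}Kv_2$, whose time derivative contains $-v_1^{\top}\nabla\mathcal{P}(v_1)$ and hence, by the quasi-quadraticity of $\mathcal{P}$ (Condition~\ref{cond:ISS, controller}~(i)), a strictly negative multiple of $\mathcal{P}(v_1)$; and a weighted-energy term $\int_a^b m(\zeta)\,(\mathcal{H}x)(\zeta)^{\top}Q\,(\mathcal{H}x)(\zeta)\d\zeta$ adapted to the first-order structure. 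Differentiating the latter along the flow and integrating by parts, which is legitimate precisely because $\mathcal{H}$ is absolutely continuous (Condition~\ref{cond:ISS, system}), produces a strictly negative multiple of $E(x)$ plus boundary contributions that, for a weight $m$ chosen to vanish at the endpoint opposite to $\eta$, concentrate at the single observed endpoint $\eta$.

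It then remains to absorb those surviving boundary contributions, and this is where Condition~\ref{cond:ISS, system} enters: $|(\mathcal{H}x)(\eta)|^2$ is dominated by $|\mathcal{B}x|^2+|\mathcal{C}x|^2$, and in closed loop the feedback relation $\tilde{\mathcal{B}}\tilde{x}=d$ reads $\mathcal{B}x = d - S_c y - B_c^{\top}Kv_2$, so $|(\mathcal{H}x)(\eta)|^2$ is bounded by a combination of $|y|^2$, $|v_2|^2$ and $|d|^2$ — precisely the quantities already dissipated or counted as input. Taking $\eps$ small enough that $W$ spoils neither the equivalence of $\tilde{V}$ with $\tilde{E}$ (whose constants then stay close to $1$) nor the negativity of the leading terms, and splitting the cross product $|d||y|$ by Young's inequality with a parameter $\alpha$ chosen slightly larger than $1/(2\varsigma)$ — so that a strictly negative multiple of $|y|^2$ still survives while the disturbance coefficient $\alpha/2$ exceeds $1/(4\varsigma)$ only by an arbitrarily small margin — I arrive at the differential inequality above with any prescribed $C>1/(4\varsigma)$ and some $\mu>0$ independent of $\tilde{x}_0$ and $d$.

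Integrating the differential inequality gives $\tilde{V}(\tilde{x}(t)) \le \e^{-\mu t}\tilde{V}(\tilde{x}_0) + C\norm{d}_{[0,t],2}^2$, whence, by the equivalence of $\tilde{V}$ with $\tilde{E}$ and~\eqref{eq:E-tilde equiv to norm} together with the elementary bound $\ul{\psi}^{-1}(r+s)\le\ul{\psi}^{-1}(2r)+\ul{\psi}^{-1}(2s)$, one obtains $\norm{\tilde{x}(t)} \le \ul{\psi}^{-1}\big(2\e^{-\mu t}\ol{\psi}(\norm{\tilde{x}_0})\big) + \ul{\psi}^{-1}\big(2C\norm{d}_2^2\big)$. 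The second summand is exactly the claimed gain $\ol{\gamma}(\norm{d}_2)$, while the first drops below any prescribed $\eps$ as soon as $t$ exceeds a threshold $\ol{\tau}(\eps,r)$ that depends only on $\eps$ and on a bound $r\ge\norm{\tilde{x}_0}$ but not on $d$; this independence of $d$ is what promotes the estimate to the uniform asymptotic gain property and thereby distinguishes the present result from the merely weak one of the next section. I expect the genuine obstacle to be the weighted-energy term: since a truly hyperbolic first-order system transports information in both directions, the weight $m$ and the matrix $Q$ must be matched to the diagonalization of the principal part $P_1$ in the energy inner product so that the bulk term is sign-definite while only the endpoint $\eta$ survives at the boundary, and confirming that observation at the single endpoint $\eta$ suffices is the delicate step that genuinely exploits both the order $N=1$ and Condition~\ref{cond:ISS, system}.
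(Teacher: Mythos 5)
Your proof strategy is genuinely different from the paper's, and in outline it is viable. The paper never constructs a strict Lyapunov functional: it proves a \emph{sideways} energy estimate (Lemma~\ref{lm:ISS, E-tilde}), propagating time-slab integrals $\int x^{\top}\mathcal{H}x\,\d s$ in the spatial variable from the observed endpoint $\eta$ by a Gr\"onwall argument in $\zeta$, combines it with an integrated controller-energy estimate (Lemma~\ref{lm:ISS, E_c}, whose auxiliary functional $E_c(v)+\gamma\,v_1^{\top}v_2$ plays exactly the role of your finite-dimensional cross term), and then obtains a one-period contraction $\tilde{E}(\tilde{x}(\tau))\le\beta\tilde{E}(\tilde{x}_0)+C\norm{d}_{[0,\tau],2}^2$ which it iterates via the cocycle property. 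Your route instead aims at a differential inequality $\tfrac{\d}{\d t}\tilde{V}\le-\mu\tilde{V}+C|d|^2$ and integrates once; if completed it yields the same gain $\ul{\psi}^{-1}(2C\,\cdot^2)$, the same $d$-independent transient time (hence uniformity of the asymptotic gain), and in addition an explicit exponential rate. One minor repair: the cross term should be $v_1^{\top}v_2$, not $v_1^{\top}Kv_2$, since your choice produces $-v_1^{\top}K\nabla\mathcal{P}(v_1)$, which Condition~\ref{cond:ISS, controller}~(i) does not control, whereas $v_1^{\top}v_2$ produces $-v_1^{\top}\nabla\mathcal{P}(v_1)\le-\ol{c}_1^{-1}\mathcal{P}(v_1)$.

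The substantive problem is the step you yourself flag as ``the genuine obstacle'': the existence of a weight $m$ and a symmetric matrix $Q$ (necessarily $\zeta$-dependent in general) such that $Q\mathcal{H}P_1$ is symmetric and positive definite, so that after integration by parts the bulk is $\le -cE(x)$ while only the endpoint $\eta$ survives at the boundary. This is the entire content of the hard lemma, and you leave it as an expectation rather than a proof; moreover it \emph{cannot} be done with $Q>0$, since for positive definite $Q$ a symmetric $Q\mathcal{H}P_1$ is congruent to a matrix similar to $\mathcal{H}^{1/2}P_1\mathcal{H}^{1/2}$ and hence inherits the indefinite signature of $P_1$ (already for the string). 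The gap is fillable with an indefinite $Q$: set $R(\zeta):=\mathcal{H}^{1/2}(\zeta)P_1\mathcal{H}^{1/2}(\zeta)$ and $Q(\zeta):=\mathcal{H}^{-1/2}(\zeta)\,\mathrm{sign}(R(\zeta))\,\mathcal{H}^{-1/2}(\zeta)$; then $Q$ is symmetric and absolutely continuous (the eigenvalues of $R$ stay away from $0$ by~\eqref{eq:H bd below and above} and invertibility of $P_1$), and $Q\mathcal{H}P_1=\mathcal{H}^{-1/2}|R|\mathcal{H}^{-1/2}\ge c_1>0$. Choosing then $m$ with $m=0$ at the endpoint opposite $\eta$ as the solution of $m'c_1/2=|m|\,c_2(\zeta)+c$, where $c_2\in L^1$ dominates the $P_0$-term and the derivative of $\mathcal{H}^{-1/2}|R|\mathcal{H}^{-1/2}$ (this is exactly where absolute continuity of $\mathcal{H}$ from Condition~\ref{cond:ISS, system} is needed), makes the bulk uniformly negative while only $|(\mathcal{H}x)(\eta)|^2$ survives at the boundary, and your absorption argument via Condition~\ref{cond:ISS, system} and the feedback relation $\mathcal{B}x=d-S_c\mathcal{C}x-B_c^{\top}Kv_2$ then goes through; for the vibrating string this $Q$ reduces to $P_1$ and your functional to the classical multiplier $\int m\,\partial_tw\,\partial_{\zeta}w\,\d\zeta$. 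So your proof can be completed, but as written its decisive construction is a conjecture, and this is precisely the point where the paper's sideways estimate does the work instead.
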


We now turn to the proof of the theorem and we begin by recording some  central ingredients.

\subsubsection{Central ingredients of the proof}

A first important ingredient is the following estimate for the energy along solutions which essentially uses our assumption that $\tilde{\mathfrak{S}}$ be of order $N=1$. It is a perturbed version of the respective sideways energy estimate 
from~\cite{RaZwLe17}.

\begin{lm} \label{lm:ISS, E-tilde}
Under the assumptions of the above theorem, 
there exists a function $c_0 \in \mathcal{L}$ and a $t_0 > 0$ such that for $\eta = a$ and $\eta = b$ one has
\begin{align*}
\tilde{E}(\tilde{x}(t,\tilde{x}_0,d)) \le c_0(t) \Bigg( \int_0^t |(\mathcal{H}x(s))(\eta)|^2 \d s + \int_0^t E_c(v(s)) \d s \Bigg) + \int_0^t |d(s)||y(s)| \d s
\end{align*}
for every $(\tilde{x}_0,d) \in \mathcal{D}$ and for every $t \ge t_0$, where $(x,v) := \tilde{x}(\cdot,\tilde{x}_0,d)$ and $y := y(\cdot,\tilde{x}_0,d)$. 
\end{lm}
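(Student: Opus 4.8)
The plan is to combine a dissipation (energy-balance) estimate with an undisturbed ``sideways'' observability estimate for the $N=1$ port-Hamiltonian part, and then to pass from a pointwise-in-time bound to the claimed time-averaged one. Since $(\tilde{x}_0,d) \in \mathcal{D}$, the solution $\tilde{x}(\cdot,\tilde{x}_0,d)$ is classical and may be differentiated freely. First I would record the energy balance already implicit in~\eqref{eq:class solvb, derivative of energy}: using Condition~\ref{cond:S imp-passive}, Condition~\ref{cond:P und R}~(ii) and $S_c \ge \varsigma > 0$ from~\eqref{eq:S_c pos def} one gets $\tilde{E}_{\tilde{x}}'(s) \le |d(s)||y(s)|$, hence by integration
\begin{align*}
\tilde{E}(\tilde{x}(t)) \le \tilde{E}(\tilde{x}(\tau)) + \int_0^t |d(s)||y(s)| \d s \qquad (0 \le \tau \le t),
\end{align*}
so that $\tilde{E}$ is nonincreasing up to the disturbance term. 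This is exactly where the perturbation $\int_0^t |d||y|$ in the statement will come from.

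The crux is the undisturbed sideways estimate, adapted from~\cite{RaZwLe17}. Since $\mathfrak{S}$ has order $N=1$, the equation $x' = P_1 \partial_\zeta(\mathcal{H}x) + P_0 \mathcal{H}x$ with symmetric invertible $P_1$ is hyperbolic, and I would diagonalize $\mathcal{H}^{1/2}P_1\mathcal{H}^{1/2}$ into forward- and backward-moving characteristics with speeds bounded away from $0$; the absolute continuity of $\zeta \mapsto \mathcal{H}(\zeta)$ guarantees that the change to characteristic variables produces only integrable lower-order terms, which together with $P_0 + P_0^{\top} \le 0$ are harmless. Finite propagation speed then yields a round-trip time $T > 0$ after which every characteristic, possibly after one reflection at the boundary opposite to $\eta$, has crossed the observation point $\eta$. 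Reflection at that opposite boundary is governed by the closed-loop boundary conditions, which couple $(\mathcal{H}x)|_{\partial}$ to the controller variable $v_2$; tracking this coupling through the energy balance of $\mathfrak{S}_c$ is what produces the term $\int_0^t E_c(v(s)) \d s$. The outcome I aim for is a local observability inequality: there are $C, T > 0$ with
\begin{align*}
E(x(\tau)) \le C \Big( \int_\tau^{\tau+T} |(\mathcal{H}x(s))(\eta)|^2 \d s + \int_\tau^{\tau+T} E_c(v(s)) \d s \Big) \qquad (\tau \ge 0),
\end{align*}
the general $\tau$ following from the case $\tau = 0$ by the cocycle property (Corollary~\ref{cor:general sol, properties}).

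Finally I would average. For $t \ge 2T$ and $\tau \in [0,t-T]$ the energy balance gives $\tilde{E}(\tilde{x}(t)) \le E(x(\tau)) + E_c(v(\tau)) + \int_0^t |d||y| \d s$; integrating over $\tau \in [0,t-T]$, dividing by $t-T$, inserting the observability inequality, and applying Fubini in the form $\int_0^{t-T}\!\int_\tau^{\tau+T}(\cdots)\,\d s\,\d\tau \le T \int_0^t (\cdots)\,\d s$ turns the fixed constant $C$ into a coefficient of order $C T/(t-T)$, which is decreasing in $t$ and tends to $0$. Choosing a decreasing majorant then yields the desired $c_0 \in \mathcal{L}$ (with the lemma's threshold taken as $t_0 := 2T$), while the disturbance term survives unchanged as $\int_0^t |d||y| \d s$.

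The main obstacle is the sideways estimate of the middle paragraph: correctly bookkeeping the characteristics through one reflection at the non-observed boundary, where the closed-loop boundary relation mixes $(\mathcal{H}x)|_{\partial}$ with the controller variables, and verifying that the reflected flux is controlled by $E_c(v)$ and the observed boundary value at $\eta$ alone, so that no disturbance-dependent term beyond $\int_0^t |d||y|$ is needed. By contrast, establishing the uniform lower bound on the characteristic speeds and absorbing the $\partial_\zeta \mathcal{H}$ and $P_0$ contributions via Gr\"onwall's lemma is routine.
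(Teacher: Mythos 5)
Your outer scaffolding is correct and coincides with the paper's: the dissipation inequality $\tilde{E}_{\tilde{x}}'(s)\le |d(s)||y(s)|$ (first line of~\eqref{eq:class solvb, derivative of energy, estimate}) and the final averaging/Fubini step are exactly the paper's second step. The gap is your middle step, and it is fatal -- precisely at the point you yourself flagged as the main obstacle. The disturbance-free ``local observability inequality''
\begin{align*}
E(x(\tau)) \le C \Big( \int_\tau^{\tau+T} |(\mathcal{H}x(s))(\eta)|^2 \d s + \int_\tau^{\tau+T} E_c(v(s)) \d s \Big) \qquad (\tau \ge 0)
\end{align*}
is false under the hypotheses of Theorem~\ref{thm:ISS}. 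Nothing in those hypotheses forces energy ever to return to the observation point $\eta$: the reflection at the opposite endpoint on which your characteristic bookkeeping relies is governed by the closed-loop relation $d=\mathcal{B}x+B_c^{\top}Kv_2+S_c\mathcal{C}x$, where $d$ is free, and for one-directional systems there is no reflection at all. Concretely, take $m=N=1$, $\partial_t x=\partial_\zeta x$ (characteristics running from $b$ to $a$), $\mathcal{H}\equiv 1$, $\mathcal{B}x=\tfrac{1}{\sqrt 2}(x(b)-x(a))$, $\mathcal{C}x=\tfrac{1}{\sqrt 2}(x(b)+x(a))$; this is impedance-energy-preserving and satisfies Condition~\ref{cond:ISS, system} with $\kappa=1$ for both endpoints. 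Choose $B_c=0$ (admissible: injectivity of $B_c$ is required only in Condition~\ref{cond:wISS, controller}~(ii), not in Theorem~\ref{thm:ISS}), $S_c=1$, $v_0=0$, so $E_c\equiv 0$ and the boundary relation reads $d(t)=\sqrt{2}\,x(t,b)$. With $d\equiv 0$ the incoming trace at $b$ vanishes identically, yet any nonzero initial datum has $E(x(0))>0$ while your right-hand side is zero for $\eta=b$; the initial energy simply exits at $a$ unseen. Since the lemma asserts the estimate for \emph{both} $\eta=a$ and $\eta=b$, no repair by choosing the ``right'' endpoint is possible. Moreover, even where reflections do occur, the boundary relation injects terms of order $\int_0^t|d|^2\d s$, which cannot be absorbed into $\int_0^t|d||y|\d s$, so the lemma as stated would still not follow.

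The structural reason is the position of the energy in the inequality, and this is exactly what the paper's proof is engineered around. The lemma bounds $\tilde{E}$ at the \emph{end} of the window $[0,t]$; your plan bounds $E(x(\tau))$ at the \emph{start} of a window by future observations, which is genuine observability and is not available here. The paper proves no pointwise-in-time bound at all. Its first step is a Gr\"onwall estimate in the \emph{spatial} variable for the time-integrated energy densities $F_\tau^{\pm}(\zeta)$ over trapezoidal space-time regions whose time interval shrinks at the characteristic rate $\gamma_0=\norm{P_1^{-1}}/\ul{m}$ as $\zeta$ recedes from $\eta$: it uses only the PDE in the interior and the absolute continuity of $\mathcal{H}$, never the boundary conditions, so no reflection bookkeeping and no $d$-dependent terms arise, and it holds at both endpoints. (In the transport counterexample the trapezoid based at $b$ is exactly the domain of determinacy of the lateral boundary there and so avoids the initial data entirely -- which is why that estimate survives while yours does not.) It yields $\int_a^b F_\tau^{\pm}(\zeta)\d\zeta\le (b-a)\,\e^{\kappa_0(b-a)}F_\tau^{\pm}(\eta)$, a bound on the window-integrated interior energy by the integrated trace at $\eta$ alone; combined with the dissipation inequality (used once to pass from the inner window to the end time), this is all your averaging step actually consumes. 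If you replace your pointwise observability claim by this sideways trapezoidal estimate, the rest of your argument goes through essentially as written.
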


\begin{proof}
As a first step, we show that there exist constants $\gamma_0, \kappa_0 \in (0,\infty)$ such that for every $(\tilde{x}_0,d) \in \mathcal{D}$ one has
\begin{align} \label{eq:ISS-E-tilde, step 1}
F_{\tau}^+(\zeta) \le F_{\tau}^+(b) \e^{\kappa_0 (b-a)}
\qquad \text{and} \qquad 
F_{\tau}^-(\zeta) \le F_{\tau}^-(a) \e^{\kappa_0 (b-a)}
\end{align}
for every $\zeta \in (a,b)$ and every $\tau > 2 \gamma_0 (b-a)$, where
\begin{align*}
&F_{\tau}^+(\zeta) := F_{\tau, (\tilde{x}_0,d)}^+(\zeta) 
:= \int_{\gamma_0(b-\zeta)}^{\tau - \gamma_0(b-\zeta)} x(s,\zeta)^{\top} \mathcal{H}(\zeta) x(s,\zeta) \d s \\
&F_{\tau}^-(\zeta) := F_{\tau, (\tilde{x}_0,d)}^-(\zeta) 
:= \int_{\gamma_0(\zeta-a)}^{\tau - \gamma_0(\zeta-a)} x(s,\zeta)^{\top} \mathcal{H}(\zeta) x(s,\zeta) \d s
\end{align*}
and where $x(s,\cdot)$ denotes the continuous representative of $x(s)$ and, as usual, $(x,v) := \tilde{x}(\cdot,\tilde{x}_0,d)$. 
Indeed, set 
\begin{align} \label{eq:ISS-E-tilde, def gamma und kappa}
\gamma_0 := \norm{P_1^{-1}} / \ul{m}
\qquad \text{and} \qquad
\kappa_0 := \big( 2 \norm{P_1^{-1} P_0} \ol{m} + \ol{m}' \big) / \ul{m}
\end{align}
where $\ul{m}, \ol{m}$ are as in~\eqref{eq:H bd below and above} and $\ol{m}' := (\int_a^b \norm{\mathcal{H}'(\theta)} \d \theta )/(b-a)$. Also, choose and fix $(\tilde{x}_0,d) \in \mathcal{D}$. Since $x$ is a classical solution of 
\begin{align*}
x' = \mathcal{A}x,
\end{align*}
and $\mathcal{H}$ is absolutely continuous,  
it follows by Lemma~3.2 of~\cite{Sc18-BV} that $F_{\tau}^{\pm}$ for every $\tau > 2 \gamma_0 (b-a)$ is absolutely continuous and hence differentiable almost everywhere with derivative given by
\begin{align} \label{eq:ISS-E-tilde, abl F_tau}
(F_{\tau}^{\pm})'(\zeta)
&= x(s,\zeta)^{\top} \big( \pm \gamma_0 \mathcal{H}(\zeta) + P_1^{-1} \big) x(s,\zeta) \Big|_{s=t^{\pm}(\zeta)} 
+ x(s,\zeta)^{\top} \big( \pm \gamma_0 \mathcal{H}(\zeta) - P_1^{-1} \big) x(s,\zeta) \Big|_{s=r^{\pm}(\zeta)} \notag \\ 
&\qquad - \int_{r^{\pm}(\zeta)}^{t^{\pm}(\zeta)} x(s,\zeta)^{\top} \Big( (P_1^{-1} P_0 \mathcal{H}(\zeta))^{\top} + \mathcal{H}'(\zeta) + P_1^{-1} P_0 \mathcal{H}(\zeta) \Big) x(s,\zeta) \d s
\end{align} 
for a.e.~$\zeta \in (a,b)$, where $r^+(\zeta) := \gamma_0(b-\zeta)$, $t^+(\zeta) := \tau - \gamma_0(b-\zeta)$ and $r^-(\zeta) := \gamma_0(\zeta-a)$, $t^-(\zeta) := \tau - \gamma_0(\zeta-a)$.
In view of~\eqref{eq:ISS-E-tilde, def gamma und kappa} it follows from~\eqref{eq:ISS-E-tilde, abl F_tau} that
\begin{align}
(F_{\tau}^+)'(\zeta) &\ge - \kappa(\zeta) \int_{r^{+}(\zeta)}^{t^{+}(\zeta)} x(s,\zeta)^{\top} \ul{m} \, x(s,\zeta) \d s 
\ge - \kappa(\zeta) F_{\tau}^+(\zeta) 
\label{eq:ISS-E-tilde, diffungl fuer F_tau^+} \\
(F_{\tau}^-)'(\zeta) &\le \kappa(\zeta) \int_{r^{-}(\zeta)}^{t^{-}(\zeta)} x(s,\zeta)^{\top} \ul{m} \, x(s,\zeta) \d s 
\le \kappa(\zeta) F_{\tau}^-(\zeta)
\label{eq:ISS-E-tilde, diffungl fuer F_tau^-}
\end{align}
for all $\tau > 2 \gamma_0(b-a)$ and a.a.~$\zeta \in (a,b)$, where
\begin{align*}
\kappa(\zeta) := \big( 2 \norm{P_1^{-1} P_0} \ol{m} + \norm{\mathcal{H}'(\zeta)} \big) / \ul{m}.
\end{align*}
Since $F_{\tau}^{\pm}$ is absolutely continuous, the differential inequalities~\eqref{eq:ISS-E-tilde, diffungl fuer F_tau^+} and~\eqref{eq:ISS-E-tilde, diffungl fuer F_tau^-} imply that $\zeta \mapsto F_{\tau}^+(\zeta) \exp(-\int_{\zeta}^b \kappa(\theta) \d \theta)$ and $\zeta \mapsto F_{\tau}^-(\zeta) \exp(-\int_a^{\zeta} \kappa(\theta) \d \theta)$ are monotonically increasing or decreasing, respectively. And from this, in turn,~\eqref{eq:ISS-E-tilde, step 1} follows in a straightforward manner.  
\smallskip

As a second step, we show the assertion of the lemma. 
Choose $\gamma_0, \kappa_0$ as in~\eqref{eq:ISS-E-tilde, def gamma und kappa} and let $(\tilde{x}_0,d) \in \mathcal{D}$. 
In view of the first line of~\eqref{eq:class solvb, derivative of energy, estimate} 
we see that
\begin{align} \label{eq:ISS-E-tilde, S-tilde passive}
\tilde{E}_{\tilde{x}}(t_2) \le \tilde{E}_{\tilde{x}}(t_1) + \int_{t_1}^{t_2} |d(s)||y(s)| \d s
\end{align}
for all $t_1 \le t_2$. 
It follows from~\eqref{eq:ISS-E-tilde, S-tilde passive} that, for every $\tau > 2 \gamma_0(b-a)$,
\begin{align*}
&\big( \tau - 2\gamma_0(b-a) \big) \tilde{E}_{\tilde{x}}(\tau-\gamma_0(b-a)) 
= \int_{\gamma_0(b-a)}^{\tau-\gamma_0(b-a)} \tilde{E}_{\tilde{x}}(\tau-\gamma_0(b-a)) \d t \\
&\qquad \qquad \le 
\int_{\gamma_0(b-a)}^{\tau-\gamma_0(b-a)} \Bigg( \tilde{E}_{\tilde{x}}(t) + \int_t^{\tau-\gamma_0(b-a)} |d(s)||y(s)| \d s \Bigg) \d t \\
&\qquad \qquad =
\frac{1}{2} \int_a^b \int_{\gamma_0(b-a)}^{\tau-\gamma_0(b-a)} x(t,\zeta)^{\top} \mathcal{H}(\zeta) x(t,\zeta) \d t \d \zeta 
+ \int_{\gamma_0(b-a)}^{\tau-\gamma_0(b-a)} E_c(v(t)) \d t \\
&\qquad \qquad \quad + \int_{\gamma_0(b-a)}^{\tau-\gamma_0(b-a)} \int_t^{\tau-\gamma_0(b-a)} |d(s)||y(s)| \d s \, \d t
\end{align*}
and therefore (increase the intervals of integration!)
\begin{align} \label{eq:ISS-E-tilde, 1}
&\big( \tau - 2\gamma_0(b-a) \big) \tilde{E}_{\tilde{x}}(\tau-\gamma_0(b-a)) 
\le \frac{1}{2} \int_a^b  F_{\tau}^{\pm}(\zeta) \d \zeta + \int_0^{\tau} E_c(v(t)) \d t \notag \\ 
&\qquad \qquad \qquad \qquad + \big( \tau-2\gamma_0(b-a) \big) \int_{\gamma_0(b-a)}^{\tau-\gamma_0(b-a)} |d(s)||y(s)| \d s. 
\end{align}
It further follows from~\eqref{eq:ISS-E-tilde, S-tilde passive} that, for every $\tau > 2 \gamma_0(b-a)$,
\begin{align} \label{eq:ISS-E-tilde, 2}
\tilde{E}_{\tilde{x}}(\tau) \le \tilde{E}_{\tilde{x}}(\tau-\gamma_0(b-a))  + \int_{\tau-\gamma_0(b-a)}^{\tau} |d(s)||y(s)| \d s.
\end{align}
Combining~\eqref{eq:ISS-E-tilde, 2} with~\eqref{eq:ISS-E-tilde, 1}  and~\eqref{eq:ISS-E-tilde, step 1},  
we conclude that
\begin{align} \label{eq:ISS-E-tilde, 3}
\tilde{E}(\tilde{x}(\tau,\tilde{x}_0,d)) 
&\le
\frac{1}{2 \big( \tau - 2\gamma_0(b-a) \big) }  \min\{ F_{\tau}^+(b), F_{\tau}^-(a) \} \e^{\kappa_0(b-a)} (b-a) \notag \\
&\quad + \frac{1}{ \tau - 2\gamma_0(b-a) } \int_0^{\tau} E_c(v(t)) \d t
+ \int_0^{\tau} |d(s)||y(s)| \d s
\end{align}
for every $\tau > 2 \gamma_0(b-a)$. Choosing now $t_0 := 2\gamma_0(b-a) + 1$ and $c_0 \in \mathcal{L}$ such that
\begin{align}
c_0(\tau) = \frac{1}{ \tau - 2\gamma_0(b-a) } \max \big\{ \e^{\kappa_0(b-a)} (b-a)/(2 \ul{m}), 1 \big\} 
\end{align}
for all $\tau \ge t_0$, we obtain the assertion of the lemma from~\eqref{eq:ISS-E-tilde, 3}. 
\end{proof}

A second important ingredient is the following estimate for the integrated controller energy which shows up 
in the previous lemma. 

\begin{lm} \label{lm:ISS, E_c}
Under the assumptions of the above theorem,
there exists a constant $C_0 > 0$ such that for every $(\tilde{x}_0,d) \in \mathcal{D}$
\begin{align*}
\int_0^t E_c(v(s)) \d s \le C_0 \bigg( \tilde{E}(\tilde{x}_0) + \int_0^t |y(s)|^2 \d s \bigg)
\end{align*}
for all $t \in [0,\infty)$, where $(x,v) := \tilde{x}(\cdot,\tilde{x}_0,d)$ and $y := y(\cdot,\tilde{x}_0,d)$. 
\end{lm}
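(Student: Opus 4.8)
The plan is to bound the time-integrals of the kinetic part $\frac{1}{2} v_2^{\top} K v_2$ and the potential part $\mathcal{P}(v_1)$ of $E_c$ separately, crucially exploiting both parts of Condition~\ref{cond:ISS, controller}. Throughout, let $(\tilde{x}_0,d) \in \mathcal{D}$, write $(x,v) := \tilde{x}(\cdot,\tilde{x}_0,d)$ and $y := y(\cdot,\tilde{x}_0,d) = \mathcal{C}x$, and denote by $C$ a generic constant depending only on the data. First I would record the controller energy balance: differentiating $E_c(v(s)) = \mathcal{P}(v_1) + \frac{1}{2} v_2^{\top} K v_2$ along the solution, using $v_1' = Kv_2$ and $v_2' = -\nabla\mathcal{P}(v_1) - \mathcal{R}(Kv_2) + B_c y$ together with the symmetry of $K$, the two $\nabla\mathcal{P}$-terms cancel and one obtains
\[
\frac{\mathrm{d}}{\mathrm{d}s} E_c(v(s)) = -(Kv_2)^{\top} \mathcal{R}(Kv_2) + (Kv_2)^{\top} B_c y .
\]

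Next I would derive the kinetic estimate from this balance. By Condition~\ref{cond:ISS, controller}(ii) one has $(Kv_2)^{\top} \mathcal{R}(Kv_2) \ge |Kv_2|^2 / \ol{c}_2$; inserting this, integrating over $[0,t]$, discarding $E_c(v(t)) \ge 0$, and treating the cross term $(Kv_2)^{\top} B_c y$ by Young's inequality (absorbing an $\eps|Kv_2|^2$-contribution on the left) yields, using $E_c(v(0)) \le \tilde{E}(\tilde{x}_0)$, both the pointwise bound $E_c(v(t)) \le C\big(\tilde{E}(\tilde{x}_0) + \int_0^t |y(s)|^2 \d s\big)$ and the integral bound $\int_0^t |Kv_2|^2 \d s \le C\big(\tilde{E}(\tilde{x}_0) + \int_0^t |y(s)|^2 \d s\big)$. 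Since $K > 0$ by~\eqref{eq:S_c pos def}, the kinetic energy $\frac{1}{2} v_2^{\top} K v_2$ is comparable to $|Kv_2|^2$, so $\int_0^t \frac{1}{2} v_2^{\top} K v_2 \d s$ satisfies the same bound.

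For the potential part I would introduce the cross-multiplier $\Psi(s) := v_1(s)^{\top} v_2(s)$. Differentiating and using the dynamics gives $\Psi'(s) = v_2^{\top} K v_2 - v_1^{\top} \nabla\mathcal{P}(v_1) - v_1^{\top}\mathcal{R}(Kv_2) + v_1^{\top} B_c y$, so that integrating and rearranging expresses $\int_0^t v_1^{\top} \nabla\mathcal{P}(v_1) \d s$ as $\Psi(0) - \Psi(t)$ plus the kinetic integral $\int_0^t v_2^{\top} K v_2 \d s$ and the two cross terms $-\int_0^t v_1^{\top}\mathcal{R}(Kv_2) \d s$ and $\int_0^t v_1^{\top} B_c y \d s$. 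Here $\Psi(0)$ is bounded by $C\tilde{E}(\tilde{x}_0)$ and $-\Psi(t) \le |\Psi(t)| \le \frac{1}{2}(|v_1(t)|^2 + |v_2(t)|^2) \le C E_c(v(t))$ (using $|v_1|^2 \le \mathcal{P}(v_1)/\ul{c}_1 \le E_c(v)/\ul{c}_1$ from Condition~\ref{cond:ISS, controller}(i) and the analogous bound for $v_2$ from $K>0$), hence by the pointwise bound of the previous step; the kinetic integral is controlled by that step as well. For the cross terms I would apply Young's inequality, using $|\mathcal{R}(Kv_2)|^2 \le |Kv_2|^2/\ul{c}_2$ (Condition~\ref{cond:ISS, controller}(ii)) and $|B_c^{\top} y|^2 \le C|y|^2$; this leaves only small multiples $\eps\int_0^t |v_1|^2 \d s$ on the right.

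The main obstacle — and the point where the quasi-quadratic estimate is indispensable — is the apparent circularity: these $\eps\int_0^t |v_1|^2 \d s$ contributions must be reabsorbed into the very quantity $\int_0^t v_1^{\top}\nabla\mathcal{P}(v_1) \d s$ on the left. This is resolved by Condition~\ref{cond:ISS, controller}(i), which gives $\int_0^t |v_1|^2 \d s \le (\ol{c}_1/\ul{c}_1) \int_0^t v_1^{\top}\nabla\mathcal{P}(v_1) \d s$, so that choosing $\eps$ small enough absorbs them and produces $\int_0^t v_1^{\top}\nabla\mathcal{P}(v_1) \d s \le C\big(\tilde{E}(\tilde{x}_0) + \int_0^t |y(s)|^2 \d s\big)$. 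Since $\mathcal{P}(v_1) \le \ol{c}_1 v_1^{\top}\nabla\mathcal{P}(v_1)$, the integrated potential energy obeys the same bound, and adding it to the kinetic estimate yields the claimed inequality for $\int_0^t E_c(v(s)) \d s$ with a suitable $C_0$.
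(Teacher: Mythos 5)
Your proposal is correct; every step checks out. The energy balance $\frac{\mathrm{d}}{\mathrm{d}s}E_c(v(s)) = -(Kv_2)^{\top}\mathcal{R}(Kv_2) + (Kv_2)^{\top}B_c y$ follows exactly as you say from the closed-loop dynamics $v_1' = Kv_2$, $v_2' = -\nabla\mathcal{P}(v_1)-\mathcal{R}(Kv_2)+B_c y$ and the symmetry of $K$, and your absorption arguments via Condition~\ref{cond:ISS, controller} are sound (the only blemish is the notational slip $|B_c^{\top}y|$, which should read $|B_c y|$). Your route differs from the paper's in organization, though the driving mechanism is the same: both proofs extract the potential energy through the cross term $v_1^{\top}v_2$. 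The paper bundles everything into a single perturbed Lyapunov function $V_{\gamma}(v) := E_c(v) + \gamma\, v_1^{\top}v_2$, first shows $\tfrac{1}{2}V_{\gamma} \le E_c \le 2V_{\gamma}$ for small $\gamma$, then derives the pointwise differential inequality $V_{\gamma\,v}'(s) \le -\kappa_0 V_{\gamma}(v(s)) + C_2|y(s)|^2$ by choosing $\alpha$ large and $\gamma$ small, and finally integrates with the factor $\e^{\kappa_0 t}$. You instead keep the two pieces separate: the energy balance alone yields the kinetic integral bound plus a pointwise bound on $E_c(v(t))$, and the multiplier identity for $\Psi = v_1^{\top}v_2$, integrated in time, yields the potential integral bound after reabsorbing the $\eps\int_0^t|v_1|^2\d s$ terms via the quasi-quadratic condition. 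What the paper's version buys is a stronger intermediate statement -- exponential decay of $V_{\gamma}$ (hence of $E_c$) modulo the $|y|^2$ forcing -- obtained at the cost of the equivalence lemma for $V_{\gamma}$ and the integrating-factor computation; your version is more elementary in that it needs neither, but it delivers only the integral estimates, which is all the lemma claims anyway. Note also that your pointwise bound on $E_c(v(t))$ (needed to control $-\Psi(t)$) plays the role that nonnegativity of $V_{\gamma}$ plays in the paper, so the two proofs have the same logical skeleton under different bookkeeping.
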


\begin{proof}
We can argue as in~\cite{RaZwLe17}, but for the reader's convenience we repeat the arguments here in a streamlined fashion. 
As a first step, we show that $E_c$ is equivalent to the auxiliary functions $V_{\gamma}$ defined by
\begin{align*}
V_{\gamma}(v) := E_c(v) + \gamma \, v_1^{\top} v_2 \qquad (v = (v_1,v_2) \in \R^{2m_c})
\end{align*}
for all sufficiently small $\gamma >0$. 
In fact, we show that there is a $\gamma_0 > 0$ such that for all $\gamma \in (0,\gamma_0]$ and $v \in \R^{2m_c}$ one has
\begin{align} \label{eq:ISS, E_c, step 1}
\frac{1}{2} V_{\gamma}(v) \le E_c(v) \le 2 V_{\gamma}(v).
\end{align}
In order to see this, just observe that by Condition~\ref{cond:ISS, controller} one has for all $v \in \R^{2m_c}$
\begin{align}
|v_1^{\top} v_2| \le \frac{|v_1|^2}{2} + \frac{|v_2|^2}{2} \le \ul{c}_1^{-1} \mathcal{P}(v_1) + \norm{K^{-1}} \frac{v_2^{\top}Kv_2}{2}
\le C_1 E_c(v)
\end{align}
for $C_1 := \max \{ \ul{c}_1^{-1}, \norm{K^{-1}} \}$. So, with $\gamma_0 := 1/(2C_1)$ the equivalence~\eqref{eq:ISS, E_c, step 1} follows. 
\smallskip

As a second step, we show that there exists a constant $C_0 > 0$ and a $\gamma \in (0,\gamma_0]$ such that for every $(\tilde{x}_0,d) \in \mathcal{D}$
\begin{align} \label{eq:ISS, E_c, step 2}
\int_0^t V_{\gamma}(v(s)) \d s \le C_0 \bigg( V_{\gamma}(v_0) + \int_0^t |y(s)|^2 \d s \bigg)
\end{align}
for all $t \in [0,\infty)$, where $(x,v) := \tilde{x}(\cdot,\tilde{x}_0,d)$ and $y := y(\cdot,\tilde{x}_0,d)$. In conjunction with the first step, this proves the lemma. 
%
So let $(\tilde{x}_0,d) \in \mathcal{D}$, then $s \mapsto V_{\gamma \, v}(s) := V_{\gamma}(v(s))$ is continuously differentiable and its derivative satisfies
\begin{align}
V_{\gamma \, v}'(s) 
&= -(Kv_2(s))^{\top}\mathcal{R}(Kv_2(s)) + (Kv_2(s))^{\top} B_c y(s) 
+ \gamma \, v_2(s)^{\top}Kv_2(s) \notag \\
&\quad - \gamma \, v_1(s)^{\top} \nabla \mathcal{P}(v_1(s)) - \gamma \, v_1(s)^{\top}\mathcal{R}(Kv_2(s)) + \gamma \, v_1(s)^{\top} B_c y(s).
\end{align}
for all $s \in [0,\infty)$. With the help of Condition~\ref{cond:ISS, controller} it thus follows that 
\begin{align} \label{eq:ISS, E_c, 2.1}
V_{\gamma \, v}'(s) 
&\le 
-\ol{c}_2^{-1} |Kv_2(s)|^2 + \frac{1}{2\alpha} |Kv_2(s)|^2 + \frac{\alpha}{2} \norm{B_c}^2 |y(s)|^2
+ \gamma \, v_2(s)^{\top}Kv_2(s) \notag \\
&\quad - \gamma \, \ol{c}_1^{-1} \mathcal{P}(v_1(s)) + \frac{\gamma}{2\alpha} |v_1(s)|^2 + \frac{\gamma \alpha}{2} |\mathcal{R}(Kv_2(s)|^2 + \frac{\gamma}{2 \alpha} |v_1(s)|^2 + \frac{\gamma \alpha}{2} \norm{B_c}^2 |y(s)|^2 \notag \\
&\le 
\gamma \Big( -\ol{c}_1^{-1} + \frac{\ul{c}_1^{-1}}{\alpha} \Big) \mathcal{P}(v_1(s)) 
+ 2 \Big( -\ol{c}_2^{-1} \norm{K} + \frac{\norm{K}}{\alpha} + \gamma \big( \frac{\alpha}{2} \ul{c}_2^{-1} \norm{K} + 1 \big) \Big) \cdot \notag \\
&\qquad \qquad \cdot \frac{v_2(s)^{\top}Kv_2(s)}{2} 
+ \frac{\alpha}{2} (1+\gamma) \norm{B_c}^2 |y(s)|^2
%
\end{align}
for all $s \in [0,\infty)$ and arbitrary $\alpha, \gamma \in (0,\infty)$. We choose now $\alpha \in (0,\infty)$ so large that
\begin{align} \label{eq:ISS, E_c, alpha}
-\ol{c}_1^{-1} + \frac{\ul{c}_1^{-1}}{\alpha} < 0 
\qquad \text{and} \qquad 
-\ol{c}_2^{-1} \norm{K} + \frac{\norm{K}}{\alpha} < 0
\end{align}
and then we choose $\gamma \in (0,\gamma_0]$ so small that
\begin{align} \label{eq:ISS, E_c, gamma}
\gamma \big( \frac{\alpha}{2} \ul{c}_2^{-1} \norm{K} + 1 \big) < \ol{c}_2^{-1} \norm{K} - \frac{\norm{K}}{\alpha}.
\end{align}
It then follows from~\eqref{eq:ISS, E_c, 2.1} and~\eqref{eq:ISS, E_c, step 1} that the differential inequality 
\begin{align} \label{eq:ISS, E_c, 2.2}
V_{\gamma \, v}'(s)  \le -2\kappa_0 E_c(v(s)) + C_2 |y(s)|^2
\le -\kappa_0 V_{\gamma}(v(s)) + C_2 |y(s)|^2
\end{align}
holds true for all $s \in [0,\infty)$, where 
\begin{align*}
\kappa_0 := -\frac{1}{2} \max \bigg\{ \gamma \Big( -\ol{c}_1^{-1} + \frac{\ul{c}_1^{-1}}{\alpha} \Big), 2 \Big( -\ol{c}_2^{-1} \norm{K} + \frac{\norm{K}}{\alpha} + \gamma \big( \frac{\alpha}{2} \ul{c}_2^{-1} \norm{K} + 1 \big) \Big) \bigg\}
\end{align*}
and $C_2 := \frac{\alpha}{2} (1+\gamma) \norm{B_c}^2$ and where $\kappa_0 > 0$ by virtue of~\eqref{eq:ISS, E_c, alpha} and~\eqref{eq:ISS, E_c, gamma}. 
%
So, 
\begin{align}
\e^{\kappa_0 t} V_{\gamma}(v(t)) 
\le V_{\gamma}(v_0) + C_2 \int_0^t \e^{\kappa_0 s} |y(s)|^2 \d s
\end{align}
and therefore (multiply by $\e^{-\kappa_0 t}$ and integrate!)
\begin{align}
\int_0^{\tau} V_{\gamma}(v(t)) \d t 
&\le 
\bigg( \int_0^{\tau} \e^{-\kappa_0 t} \d t \bigg) V_{\gamma}(v_0) + C_2 \int_0^{\tau} \bigg( \int_s^{\tau} \e^{-\kappa_0 (t-s)} \d t \bigg) |y(s)|^2 \d s \notag \\
&\le 
\frac{1}{\kappa_0} V_{\gamma}(v_0) + \frac{C_2}{\kappa_0} \int_0^{\tau} |y(s)|^2 \d s
\end{align}
for all $\tau \in [0,\infty)$, which proves the claimed estimate~\eqref{eq:ISS, E_c, step 2}. 
\end{proof}

\subsubsection{Conclusion of the proof}

With the above lemmas at hand, we can now conclude the proof of our  input-to-state stability result in two steps. 
Since we already know that the closed-loop system is uniformly globally stable (Theorem~\ref{thm:UGS}), we have only to show that it is of uniform asymptotic gain $\ol{\gamma} := \ul{\psi}^{-1}(2C \, \cdot^2)$ for every $C > 1/(4\varsigma)$.
\smallskip

As a first step, we show that for every $C > 1/(4\varsigma)$ there exist a constant $\beta \in (0,1)$ and a time $\tau \in (0,\infty)$ such that for every $(\tilde{x}_0,d) \in \mathcal{D}$
\begin{align} \label{eq:ISS, step 1}
\tilde{E}(\tilde{x}(\tau,\tilde{x}_0,d)) \le \beta \tilde{E}(\tilde{x}_0) + C \norm{d}_{[0,\tau],2}^2.
\end{align}
So let $C > 1/(4\varsigma)$ and let $\eta$ be the endpoint of $(a,b)$ for which~\eqref{eq:ISS, matrungl} is satisfied. Also, let $(\tilde{x}_0,d) \in \mathcal{D}$ and, as usual, write $(x,v) := \tilde{x}(\cdot,\tilde{x}_0,d)$ and $u := \mathcal{B}x$, $y := \mathcal{C}x$. 
We know from the first line of~\eqref{eq:class solvb, derivative of energy, estimate} that
\begin{align} \label{eq:ISS, step 1, 1}
\tilde{E}_{\tilde{x}}(t) \le \tilde{E}(\tilde{x}_0) &+ \int_0^t |d(s)||y(s)| \d s - \ol{c}_2^{-1} \int_0^t |Kv_2(s)|^2 \d s - \varsigma (1-\eps) \int_0^t |y(s)|^2 \d s \notag \\ 
&- \varsigma \eps \int_0^t |y(s)|^2 \d s
\end{align}
for all $t \in [0,\infty)$ and $\eps \in (0,1)$, where Condition~\ref{cond:ISS, controller} and~\eqref{eq:S_c pos def} have been used. 
Also, from~\eqref{eq:ISS, matrungl} it follows that 
\begin{align} \label{eq:ISS, step 1, 2}
-|y(s)|^2 &\le -\kappa |(\mathcal{H}x(s))(\eta)|^2 + |u(s)|^2 = -\kappa |(\mathcal{H}x(s))(\eta)|^2 + |d(s)-B_c^{\top}Kv_2(s)-y(s)|^2 \notag \\
&\le -\kappa |(\mathcal{H}x(s))(\eta)|^2 + 4 |d(s)|^2 + 4 \norm{B_c}^2 |Kv_2(s)|^2 + 4 \norm{S_c}^2 |y(s)|^2
\end{align}
for all $s \in [0,\infty)$. And from Lemma~\ref{lm:ISS, E-tilde} and~\ref{lm:ISS, E_c} it follows that 
\begin{align} \label{eq:ISS, step 1, 3}
- \int_0^t |(\mathcal{H}x(s))(\eta)|^2 \d s 
\le 
- \frac{1}{c_0(t)}\tilde{E}_{\tilde{x}}(t) &+ C_0 \tilde{E}(\tilde{x}_0) + C_0 \int_0^t |y(s)|^2 \d s  \notag \\
&+ \frac{1}{c_0(t)} \int_0^t |d(s)||y(s)| \d s
\end{align}
for all $t \in [t_0,\infty)$, where $C_0, t_0, c_0$ are the constants and the function from the above lemmas.
Inserting now~\eqref{eq:ISS, step 1, 2} and~\eqref{eq:ISS, step 1, 3} in the last integral on the right-hand side of~\eqref{eq:ISS, step 1, 1}, we obtain
\begin{align*}
&\tilde{E}_{\tilde{x}}(t) \le \tilde{E}(\tilde{x}_0) + \int_0^t |d(s)||y(s)| \d s - \ol{c}_2^{-1} \int_0^t |Kv_2(s)|^2 \d s - \varsigma (1-\eps) \int_0^t |y(s)|^2 \d s \notag \\
&\qquad \qquad - \frac{\varsigma \eps \kappa}{c_0(t)} \tilde{E}_{\tilde{x}}(t) + \varsigma \eps \kappa C_0 \tilde{E}(\tilde{x}_0) + \varsigma \eps \kappa C_0 \int_0^t |y(s)|^2 \d s + \frac{\varsigma \eps \kappa}{c_0(t)} \int_0^t |d(s)||y(s)| \d s \notag \\
&\qquad \qquad + 4 \varsigma \eps \int_0^t |d(s)|^2 \d s + 4 \varsigma \eps \norm{B_c}^2 \int_0^t |Kv_2(s)|^2 \d s + 4 \varsigma \eps \norm{S_c}^2 \int_0^t |y(s)|^2 \d s
\end{align*}
for all $t \in [t_0,\infty)$. So, by regrouping terms, we see that
\begin{align} \label{eq:ISS, step 1, 4}
\bigg( 1 + \frac{\varsigma \eps \kappa}{c_0(t)} \bigg) \tilde{E}_{\tilde{x}}(t) 
&\le 
\big( 1+ \varsigma \eps \kappa C_0 \big) \tilde{E}(\tilde{x}_0) 
+ \big( 4 \varsigma \eps \norm{B_c}^2 - \ol{c}_2^{-1} \big) \int_0^t |Kv_2(s)|^2 \d s \notag \\
&\quad + \bigg(  \bigg( 1 + \frac{\varsigma \eps \kappa}{c_0(t)} \bigg) \frac{1}{2\alpha} + \varsigma \eps \kappa C_0 + 4 \varsigma \eps \norm{S_c}^2 - \varsigma (1-\eps) \bigg) \int_0^t |y(s)|^2 \d s \notag \\
&\quad + \bigg(  \bigg( 1 + \frac{\varsigma \eps \kappa}{c_0(t)} \bigg) \frac{\alpha}{2} + 4 \varsigma \eps \bigg) \int_0^t |d(s)|^2 \d s
\end{align} 
for all $t \in [t_0,\infty)$ and arbitrary $\alpha \in (0,\infty)$. In particular, this holds true for $\alpha$ chosen such that
\begin{align} \label{eq:ISS, step 1, alpha}
\frac{1}{2\varsigma} < \alpha < 2C
\end{align}
(which choice is possible because $C > 1/(4\varsigma)$). We choose now $\tau \in [t_0,\infty)$ so large that
\begin{align} \label{eq:ISS, step 1, tau}
\frac{1}{c_0(\tau)} > C_0
\end{align}
which is possible because $c_0 \in \mathcal{L}$. We also choose $\eps \in (0,1)$ so small that 
\begin{equation} \label{eq:ISS, step 1, eps}
\begin{gathered}
4 \varsigma \eps \norm{B_c}^2 \le \ol{c}_2^{-1}
\qquad \text{and} \qquad
\frac{\alpha}{2} + \frac{4 \varsigma \eps}{1 + \frac{\varsigma \eps \kappa}{c_0(\tau)}} \le C 
\\
\frac{1}{2\alpha} + \varsigma \eps \bigg( \frac{\kappa}{c_0(\tau)} \frac{1}{2\alpha} + \kappa C_0 + 4 \norm{S_c}^2 + 1 \bigg) \le \varsigma
\end{gathered}
\end{equation}
which last two choices are possible because of~\eqref{eq:ISS, step 1, alpha}. Combining now~\eqref{eq:ISS, step 1, 4} with~\eqref{eq:ISS, step 1, eps}, we conclude that 
\begin{align}
\tilde{E}(\tilde{x}(\tau,\tilde{x}_0,d)) \le \beta \tilde{E}(\tilde{x}_0) + C \norm{d}_{[0,\tau],2}^2
\end{align}
where by virtue of~\eqref{eq:ISS, step 1, tau} the constant $\beta$ is smaller than $1$, as desired:
\begin{align}
\beta := \frac{1+ \varsigma \eps \kappa C_0}{1 + \frac{\varsigma \eps \kappa}{c_0(\tau)}} < 1.
\end{align}

As a second step, we show that for every $C > 1/(4\varsigma)$ and for every constant $\beta$ and time $\tau$ as in the first step one has
\begin{align} \label{eq:ISS, step 2}
\tilde{E}(\tilde{x}(t,\tilde{x}_0,d)) \le \frac{1}{\beta} \beta^{t/\tau} \tilde{E}(\tilde{x}_0) + C \norm{d}_{[0,t],2}^2
\end{align}
for every $t \in [0,\infty)$ and every $(\tilde{x}_0,d) \in \mathcal{D}$. 
So let $C > 1/(4\varsigma)$ and let $\beta$ and $\tau$ be as in the first step. It then follows from~\eqref{eq:ISS, step 1} by the cocycle property 
and induction that
\begin{align} \label{eq:ISS, step 2, 1}
\tilde{E}(\tilde{x}(n \tau,\tilde{x}_0,d)) \le \beta^n \tilde{E}(\tilde{x}_0) + C \norm{d}_{[0,n \tau],2}^2
\end{align}
for every $n \in \N_0$ and every $(\tilde{x}_0,d) \in \mathcal{D}$. If now $t \in [0,\infty)$ is arbitrary, then 
\begin{align*}
t = n \tau + t-n \tau 
\qquad \text{and} \qquad
t-n\tau \in [0,\tau)
\end{align*}
for a unique $n = n_t \in \N_0$ and thus it follows by the cocycle property and by~\eqref{eq:ISS, step 2, 1} and~\eqref{eq:UGS, energy, class} that
\begin{align}
\tilde{E}(\tilde{x}(t,\tilde{x}_0,d)) 
&\le \beta^n \tilde{E}\big( \tilde{x}(t-n\tau,\tilde{x}_0,d) \big) + C \norm{d(\cdot+t-n \tau)}_{[0,n\tau],2}^2 
\notag \\
&\le \beta^n \tilde{E}(\tilde{x}_0) + \frac{1}{4\varsigma} \norm{d}_{[0,t-n\tau],2}^2 + C \norm{d}_{[t-n\tau,t],2}^2
\end{align}
every $(\tilde{x}_0,d) \in \mathcal{D}$, from which~\eqref{eq:ISS, step 2} and hence the second step follows. 
%
We conclude by density and continuity that for every $C > 1/(4\varsigma)$ there is a constant $\beta \in (0,1)$ and a time $\tau \in (0,\infty)$ such that~\eqref{eq:ISS, step 2} also holds for arbitrary data $(\tilde{x}_0,d) \in \tilde{X} \times L^2([0,\infty),\R^k)$ and $t \in [0,\infty)$. So, by~\eqref{eq:E-tilde equiv to norm}
\begin{align}
\norm{\tilde{x}(t,\tilde{x}_0,d)} \le \ul{\psi}^{-1}\Big( \frac{2}{\beta} \beta^{t/\tau} \ol{\psi}(\norm{\tilde{x}_0}) \bigg) + \ul{\psi}^{-1}\big( 2 C \norm{d}_{[0,t],2}^2)
\end{align} 
for every $t \in [0,\infty)$ and every $(\tilde{x}_0,d) \in \tilde{X} \times L^2([0,\infty),\R^k)$. In particular, $\tilde{\mathfrak{S}}$ is of uniform asymptotic gain $\ol{\gamma} := \ul{\psi}^{-1}(2C \, \cdot^2)$ for every $C > 1/(4\varsigma)$, as desired. 

\subsection{Weak input-to-state stability of the closed-loop system} \label{sect:wISS}

In this section we show that for systems $\mathfrak{S}$ of arbitrary order $N \in \N$ and for a general class of controllers $\mathfrak{S}_c$, the closed-loop system $\tilde{\mathfrak{S}}$ is weakly input-to-state stable. We call $\tilde{\mathfrak{S}}$ \emph{weakly input-to-state stable} w.r.t.~inputs from $L^2([0,\infty),\R^k)$ iff it is uniformly globally stable and of weak asymptotic gain. See~\cite{Sc18-GAMM} for other characterizations of weak input-to-state stability. In this context, the \emph{weak asymptotic gain} property by definition means the following: there is a function $\ol{\gamma} \in \mathcal{K} \cup \{0\}$ 
such that for every $\eps > 0$ and every $\tilde{x}_0 \in \tilde{X}$ and $d \in L^2([0,\infty),\R^k)$ there is a time $\ol{\tau} = \ol{\tau}(\eps,\tilde{x}_0,d)$ such that 
\begin{align} \label{eq:wAG def}
\norm{\tilde{x}(t,\tilde{x}_0,d)} \le \eps + \gamma(\norm{d}_2)  
\qquad (t \ge \ol{\tau}).
\end{align}
A function $\gamma$ as above is called a \emph{weak (asymptotic) gain (function)} for $\tilde{\mathfrak{S}}$. 
We observe that the only difference to the uniform asymptotic gain property is that the time $\tau$ is allowed to depend on the initial state $\tilde{x}_0$ (instead of only on its norm) and on the disturbance $d$.  In~\cite{MiWi16a} the weak asymptotic gain property is called just asymptotic gain.

\begin{lm} \label{lm:wISS implies conv to 0}
If the assumptions (Conditions~\ref{cond:S imp-passive}, \ref{cond:P und R}, \ref{cond:B-tilde surj}) of the solvability results are satisfied and $\tilde{\mathfrak{S}}$ is weakly input-to-state stable w.r.t.~inputs from $L^2([0,\infty),\R^k)$, then for every $(\tilde{x}_0,d) \in \tilde{X} \times L^2([0,\infty),\R^k)$ one has
\begin{align} \label{eq:wISS conv to 0}
\tilde{x}(t,\tilde{x}_0,d) \longrightarrow 0 \qquad (t \to \infty). 
\end{align}
\end{lm}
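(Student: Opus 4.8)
The plan is to follow the structure of the proof of Lemma~\ref{lm:ISS implies conv to 0} (the uniform case), but to exploit the cocycle property so that the weak asymptotic gain can be applied to a \emph{restarted} state. The crucial point to keep in mind is that applying the weak asymptotic gain property~\eqref{eq:wAG def} directly to the given pair $(\tilde{x}_0,d)$ only yields a time $\ol{\tau}$ with $\norm{\tilde{x}(t,\tilde{x}_0,d)} \le \eps + \ol{\gamma}(\norm{d}_2)$ for $t \ge \ol{\tau}$, and the term $\ol{\gamma}(\norm{d}_2)$ need not be small. The remedy is to restart the evolution at a time $t_0$ after which the disturbance carries little remaining energy, so that the corresponding gain contribution drops below $\eps$.

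Concretely, I would first fix $\eps > 0$ and $(\tilde{x}_0,d) \in \tilde{X} \times L^2([0,\infty),\R^k)$. Since $\norm{d(t_0+\cdot)}_2^2 = \int_{t_0}^{\infty} |d(s)|^2 \d s \longrightarrow 0$ as $t_0 \to \infty$ and since $\ol{\gamma} \in \mathcal{K} \cup \{0\}$ is continuous with $\ol{\gamma}(0) = 0$, I can choose a time $t_0 = t_0(\eps,d)$ with $\ol{\gamma}(\norm{d(t_0+\cdot)}_2) < \eps$. I then set $\tilde{x}_1 := \tilde{x}(t_0,\tilde{x}_0,d) \in \tilde{X}$, which is a well-defined element of $\tilde{X}$ by the generalized solvability results, and I apply the weak asymptotic gain property to the concrete shifted pair $(\tilde{x}_1,d(t_0+\cdot))$. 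This produces a time $\ol{\tau}_1 = \ol{\tau}_1(\eps,\tilde{x}_1,d(t_0+\cdot))$ such that
\[
\norm{\tilde{x}(s,\tilde{x}_1,d(t_0+\cdot))} \le \eps + \ol{\gamma}(\norm{d(t_0+\cdot)}_2) < 2\eps \qquad (s \ge \ol{\tau}_1).
\]
Invoking now the cocycle property of the generalized solution map (Corollary~\ref{cor:general sol, properties}~(i)), for every $t \ge t_0 + \ol{\tau}_1$ I have $\tilde{x}(t,\tilde{x}_0,d) = \tilde{x}(t-t_0,\tilde{x}_1,d(t_0+\cdot))$ with $t-t_0 \ge \ol{\tau}_1$, whence $\norm{\tilde{x}(t,\tilde{x}_0,d)} < 2\eps$. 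As $\eps > 0$ was arbitrary, this gives the claimed convergence $\tilde{x}(t,\tilde{x}_0,d) \to 0$ as $t \to \infty$.

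The main obstacle — if one tries to transcribe the uniform proof verbatim — is that there the asymptotic gain time was chosen for a norm \emph{bound} $r$ on the initial state, supplied by uniform global stability, so that it applied uniformly to the (only norm-controlled) restarted state. In the weak setting no such uniformity over a ball is available. This is, however, exactly where the weak asymptotic gain property is more flexible than the uniform one: its time is permitted to depend on the individual initial state rather than only on its norm. Applying it directly to the concrete restarted state $\tilde{x}_1$ therefore sidesteps the difficulty entirely, and uniform global stability is not even needed for the restart step. The only care required is to feed the weak gain the shifted data $(\tilde{x}_1,d(t_0+\cdot))$ rather than the original $(\tilde{x}_0,d)$, so that the residual gain term $\ol{\gamma}(\norm{d(t_0+\cdot)}_2)$ is the small one.
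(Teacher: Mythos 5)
Your proof is correct and is exactly the argument the paper intends: the paper's own proof of this lemma merely declares it ``completely analogous'' to the uniform case (Lemma~\ref{lm:ISS implies conv to 0}), whose restart-at-$t_0$-plus-cocycle structure you reproduce, applying the weak asymptotic gain directly to the shifted pair $(\tilde{x}_1, d(t_0+\cdot))$. Your closing observation --- that the weak gain's permitted dependence on the individual restarted state replaces the uniform proof's use of uniform global stability to norm-bound that state --- is precisely the ``only difference'' the paper alludes to.
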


\begin{proof}
Completely analogous to the proof of Lemma~\ref{lm:ISS implies conv to 0}, the only difference being that there the convergence~\eqref{eq:ISS conv to 0} was even locally uniform w.r.t.~$\tilde{x}_0$. 
\end{proof}

In order to achieve weak input-to-state stability, we add the following conditions on the system $\mathfrak{S}$ and the controller $\mathfrak{S}_c$ to the assumptions from the solvability results.

\begin{cond} \label{cond:wISS, system}
$\mathfrak{S}$ is classically approximately observable in infinite time, that is, for non-zero initial state $0 \ne x_0 \in D(\mathcal{A}) \cap \ker \mathcal{B} = D(A)$ and zero input $u = 0$, the corresponding classical output $y(\cdot,x_0,0) := \mathcal{C}x(\cdot,x_0,0)$ of $\mathfrak{S}$ cannot vanish identically 
on $[0,\infty)$.
\end{cond}

\begin{cond} \label{cond:wISS, controller}
\begin{itemize}
\item[(i)] $\mathcal{R}$ is strictly damping, that is, for some constants $\ul{c}, \ol{c}, \delta > 0$, 
\begin{gather*}
v_2^{\top} \mathcal{R}(v_2) \ge \ul{c} |v_2|^2 \qquad (|v_2| \le \delta) 
\qquad \text{and} \qquad
v_2^{\top} \mathcal{R}(v_2) \ge \ol{c} \qquad (|v_2| > \delta)
\end{gather*}
\item[(ii)] $B_c$, the input operator of the controller, is injective.
\end{itemize}
\end{cond}

\begin{thm}  \label{thm:wISS}
Suppose  that the assumptions (Conditions~\ref{cond:S imp-passive}, \ref{cond:P und R}, \ref{cond:B-tilde surj}) of the solvability  theorems are satisfied along with Conditions~\ref{cond:wISS, system} and~\ref{cond:wISS, controller}. 
Suppose further that the only equilibrium point $\tilde{x}_*$ of the undisturbed system
\begin{align} \label{eq:closed-loop mit zero-input} 
\tilde{x}' = \mathcal{\tilde{A}} \tilde{x} + \tilde{f}(\tilde{x}) \qquad \text{with} \qquad 0 = \mathcal{\tilde{B}}\tilde{x}(t)
\end{align} 
is the point $\tilde{x}_* = 0$. 
Then the closed-loop system $\tilde{\mathfrak{S}}$ is weakly input-to-state stable and the function $\ol{\gamma} = 0$ is a weak asymptotic gain for $\tilde{\mathfrak{S}}$. In particular, 
\begin{align}
\tilde{x}(t,\tilde{x}_0,d) \longrightarrow 0 \qquad (t \to \infty)
\end{align}
for every $\tilde{x}_0 \in \tilde{X}$ and $d \in L^2([0,\infty),\R^k)$.
\end{thm}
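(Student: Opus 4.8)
The plan is to reduce the claim to a pure attractivity statement and then establish that attractivity by a LaSalle-type invariance argument. Since uniform global stability is already available from Theorem~\ref{thm:UGS}, and since the asserted weak asymptotic gain $\ol{\gamma} = 0$ means, by~\eqref{eq:wAG def}, precisely that for every $\eps>0$ there is a time $\ol{\tau}$ with $\norm{\tilde{x}(t,\tilde{x}_0,d)} \le \eps$ for all $t \ge \ol{\tau}$, it suffices to prove the convergence $\tilde{x}(t,\tilde{x}_0,d) \to 0$ as $t \to \infty$ for every $(\tilde{x}_0,d) \in \tilde{X}\times L^2([0,\infty),\R^k)$; weak input-to-state stability and the final displayed limit then follow at once. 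As a preliminary bookkeeping step I would start from the first line of~\eqref{eq:class solvb, derivative of energy, estimate} and use $S_c \ge \varsigma$ together with Young's inequality $|d||y| \le \tfrac{\varsigma}{2}|y|^2 + \tfrac{1}{2\varsigma}|d|^2$ to obtain, along classical solutions,
\begin{align*}
\tilde{E}_{\tilde{x}}(t) + \frac{\varsigma}{2}\int_0^t |y(s)|^2 \d s + \int_0^t (Kv_2(s))^{\top}\mathcal{R}(Kv_2(s)) \d s \le \tilde{E}(\tilde{x}_0) + \frac{1}{2\varsigma}\norm{d}_2^2 .
\end{align*}
Thus $y \in L^2$, the integrated dissipation $\int_0^{\infty}(Kv_2)^{\top}\mathcal{R}(Kv_2)\d s$ is finite, and, since $s \mapsto \tilde{E}_{\tilde{x}}(s) - \int_0^s |d||y|$ is non-increasing and bounded below while $\int_0^{\infty}|d||y| \le \norm{d}_2\norm{y}_2 < \infty$, the energy $\tilde{E}_{\tilde{x}}(t)$ converges to some $\tilde{E}_{\infty}\ge 0$. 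All of this passes to generalized solutions by density and continuity.

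Next I would set up the invariance argument, whose structural ingredients are the compact resolvent of $\tilde{A}$ (Lemma~\ref{lm:sgr gen with comp resolv}) and the decay $\norm{d(t_k + \cdot)}_2 \to 0$ along any $t_k \to \infty$, forced by $d \in L^2$. Granting precompactness of the orbit $\{\tilde{x}(t,\tilde{x}_0,d):t\ge 0\}$ in $\tilde{X}$, pick an $\omega$-limit point $\tilde{x}_{\infty} = \lim_k \tilde{x}(t_k,\tilde{x}_0,d)$. The cocycle property (Corollary~\ref{cor:general sol, properties}(i)) gives $\tilde{x}(t+t_k,\tilde{x}_0,d) = \tilde{x}(t,\tilde{x}(t_k,\tilde{x}_0,d),d(t_k+\cdot))$, and letting $k \to \infty$ the continuity of the solution map (Corollary~\ref{cor:general sol, properties}(ii)) together with $d(t_k+\cdot)\to 0$ yields $\tilde{x}(t+t_k,\tilde{x}_0,d)\to\tilde{x}(t,\tilde{x}_{\infty},0)$. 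Hence the entire undisturbed trajectory $t\mapsto\tilde{x}(t,\tilde{x}_{\infty},0)$ lies in the $\omega$-limit set, and by continuity of $\tilde{E}$ it carries the constant energy $\tilde{E}(\tilde{x}(t,\tilde{x}_{\infty},0)) = \tilde{E}_{\infty}$.

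I would then trivialize this invariant trajectory. Constant energy forces $\tilde{E}_{\tilde{x}}' \equiv 0$; comparing with the undisturbed dissipation rate $\tilde{E}_{\tilde{x}}' \le -\varsigma|y|^2 - (Kv_2)^{\top}\mathcal{R}(Kv_2)$, positivity of $S_c$ yields $y\equiv 0$ and the strict damping Condition~\ref{cond:wISS, controller}(i) yields $Kv_2\equiv 0$, i.e.\ $v_2\equiv 0$. Feeding $v_2\equiv 0$ and $u_c = y \equiv 0$ into the controller equation makes $v_1$ constant with $\nabla\mathcal{P}(v_1)=0$, while the boundary relation $\tilde{\mathcal{B}}\tilde{x}=0$ with $v_2=0$ and $\mathcal{C}x = y = 0$ gives $\mathcal{B}x = 0$; thus $x$ is a classical solution of $x'=Ax$ with output $\mathcal{C}x\equiv 0$, so the classical approximate observability Condition~\ref{cond:wISS, system} forces $x\equiv 0$ (here the injectivity of $B_c$ in Condition~\ref{cond:wISS, controller}(ii) enters in pinning down the controller part). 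Consequently $\tilde{x}(\cdot,\tilde{x}_{\infty},0)$ is a constant equilibrium of~\eqref{eq:closed-loop mit zero-input}, which by the equilibrium-uniqueness hypothesis equals $0$; hence $\tilde{x}_{\infty}=0$, the $\omega$-limit set is $\{0\}$, and precompactness gives $\tilde{x}(t,\tilde{x}_0,d)\to 0$.

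The main obstacle is precompactness of the orbit in $\tilde{X}$. The finite-dimensional component $v(t)\in\R^{2m_c}$ is automatically precompact, but for the $L^2$-component $x(t)$ one must exploit the compact resolvent of $\tilde{A}$ (via the compact Sobolev embedding $W^{N,2}\hookrightarrow\hookrightarrow L^2$ on the bounded interval $(a,b)$) together with a \emph{uniform} bound on $x(t)$ in the graph norm of $\mathcal{A}$; such a bound also places the limit points in $D(\tilde{A})$, which is what legitimizes the observability step above. This is delicate precisely because $\tilde{f}$ need not be dissipative, so the naive Gronwall bound on $\tilde{x}'$ grows like $\e^{Lt}$ and cannot be used directly. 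Establishing the graph-norm bound — first for regular data in $\mathcal{D}$ and then transporting the resulting convergence to arbitrary data in $\tilde{X}\times L^2$ — is therefore the technical heart of the proof.
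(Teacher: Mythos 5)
Your reduction of the theorem to the convergence statement $\tilde{x}(t,\tilde{x}_0,d) \to 0$ is correct and is exactly the paper's first move (uniform global stability from Theorem~\ref{thm:UGS} plus the observation that $\ol{\gamma}=0$ as weak gain is precisely pointwise attractivity). The energy bookkeeping and the cocycle/continuity argument producing invariant undisturbed trajectories in the $\omega$-limit set are also sound. However, the proof has a genuine gap, and you name it yourself: precompactness of the orbit in $\tilde{X}$ is asserted as the ``technical heart'' but never established, and under the paper's hypotheses it cannot be obtained by the route you sketch. Generalized solutions for $d \in L^2$ have no $D(\tilde{\mathcal{A}})$-regularity at all, so there is no graph-norm bound to speak of; even for classical data, a \emph{uniform-in-time} graph-norm bound would require a dissipation structure for the linearized/differentiated equation, which the merely locally Lipschitz, non-dissipative $\tilde{f}$ and the $L^2$ disturbance do not provide. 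Moreover, compact resolvent of $\tilde{A}$ does not compensate: the port-Hamiltonian semigroup is hyperbolic (translation-like), not compact or smoothing, so bounded orbits are not automatically precompact. A secondary issue of the same origin: your LaSalle step feeds the limit trajectory into the \emph{classical} observability Condition~\ref{cond:wISS, system}, which requires initial data in $D(A)$ -- again something only the missing regularity bound would supply.

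It is instructive to compare with how the paper sidesteps precompactness entirely. It rewrites $\tilde{f}(\tilde{x}) = \tilde{B}g(\tilde{B}^*\tilde{x}) + \tilde{B}h(\tilde{C}\tilde{x})$ and treats the closed loop as a perturbation of $\tilde{A}-\lambda\tilde{B}\tilde{B}^*$. Approximate observability of the collocated system $\mathfrak{S}(\tilde{A},\tilde{B},\tilde{B}^*)$ (proved from Condition~\ref{cond:wISS, system} and injectivity of $B_c$ via a mollification trick $\tilde{x}_{n0} = n\int_0^{1/n}\e^{\tilde{A}s}\tilde{x}_0\,\d s$) yields \emph{strong stability} of $\e^{(\tilde{A}-\lambda\tilde{B}\tilde{B}^*)t}$; the dissipation estimate and the strict damping condition put $\tilde{B}^*\tilde{x}$, $g\circ(\tilde{B}^*\tilde{x})$ and $(h\circ\tilde{C}\tilde{x})'$ in $L^2$ in time; then in the variation-of-constants formula every term tends to $0$ except the $h$-term, where integration by parts places the compact operator $(\tilde{A}-\lambda\tilde{B}\tilde{B}^*)^{-1}$ in front of a \emph{bounded} sequence -- this is where the needed sequential compactness of $\{\tilde{x}(t_n)\}$ comes from, with no graph-norm bound required. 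Finally, evaluating the resulting fixed-point identity at two values $\lambda=1,2$ shows the subsequential limit is an equilibrium of~\eqref{eq:closed-loop mit zero-input}, hence $0$ by hypothesis. If you want to salvage your invariance-principle scheme, you would have to import precisely this mechanism (or an equivalent substitute) to generate compactness; as written, the proposal does not prove the theorem.
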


We now turn to the proof of the theorem and, for that purpose, it will be most helpful to write the nonlinear part of~\eqref{eq:ivp closed-loop d ne 0} as 
$$\tilde{f}(\tilde{x}) = \tilde{B} g(\tilde{B}^*\tilde{x}) + \tilde{B} h(\tilde{C}\tilde{x}),$$ 
where $\tilde{B}: \R^{m_c} \to \tilde{X}$, $\tilde{C}: \tilde{X} \to \R^{m_c}$ and $g, h: \R^{m_c} \to \R^{m_c}$ are such that 
\begin{gather*}
\tilde{B}v_2 := (0,0,v_2), 
\qquad 
\tilde{C}\tilde{x} := v_1,\\
g(v_2) := -\mathcal{R}(v_2) \quad \text{and} \quad h(v_1) := v_1-\nabla \mathcal{P}(v_1).
\end{gather*}
In particular, $\tilde{B}^* \tilde{x} = Kv_2$ for every $\tilde{x} = (x,v_1,v_2) \in \tilde{X}$. 
We begin by rewriting the closed-loop equation~\eqref{eq:ivp closed-loop d ne 0} as 
\begin{equation} \label{eq:ivp closed-loop, rewritten}
\begin{gathered}
\tilde{x}' = \big( \tilde{\mathcal{A}} - \lambda \tilde{B} \tilde{B}^* \big) \tilde{x} + \tilde{B} \big( \lambda \tilde{B}^* \tilde{x} + g(\tilde{B}^*\tilde{x}) \big) + \tilde{B} h(\tilde{C} \tilde{x}) \qquad \text{and} \qquad \tilde{x}(0) = \tilde{x}_0 \\
d(t) = \tilde{\mathcal{B}}\tilde{x}(t),
\end{gathered}
\end{equation}
that is, as a  perturbation of the respective linear boundary control  system 
\begin{gather}
\tilde{x}' = \big( \tilde{\mathcal{A}} - \lambda \tilde{B} \tilde{B}^* \big) \tilde{x} 
\quad \text{with} \quad 
d(t) = \tilde{\mathcal{B}}\tilde{x}(t),
\end{gather} 
where $\lambda > 0$. 
It follows from~\eqref{eq:ivp closed-loop, rewritten} by the transformation~\eqref{eq:Fattorini transformation} and variation of constants that classical solutions of~\eqref{eq:ivp closed-loop d ne 0} satisfy the following integral equation:
\begin{align} \label{eq:voc, kappa ne 0}
\tilde{x}(t,\tilde{x}_0,d) = \e^{(\tilde{A}-\lambda \tilde{B}\tilde{B}^*)t} \tilde{x}_0 
& + \int_0^t \e^{(\tilde{A}-\lambda \tilde{B}\tilde{B}^*)(t-s)} \tilde{B} \big( \lambda \tilde{B}^* \tilde{x}(s,\tilde{x}_0,d) + g(\tilde{B}^* \tilde{x}(s,\tilde{x}_0,d) ) \big) \d s \notag \\
& +  \int_0^t \e^{(\tilde{A}-\lambda \tilde{B}\tilde{B}^*)(t-s)} \tilde{B} h(\tilde{C} \tilde{x}(s,\tilde{x}_0,d) ) \d s
+ \Phi_t^{\lambda}(d)  
\end{align} 
for every $(\tilde{x}_0 ,d) \in \mathcal{D}$, where
\begin{align}  \label{eq:Phi_t^kappa, def}
\Phi_t^{\lambda}(d) := -\e^{(\tilde{A}-\lambda \tilde{B}\tilde{B}^*)t} \tilde{R}d(0) &+ \tilde{R} d(t)  \\
&+ \int_0^t  \e^{(\tilde{A}-\lambda \tilde{B}\tilde{B}^*)(t-s)} \big( (\tilde{\mathcal{A}}-\lambda \tilde{B}\tilde{B}^*) \tilde{R}d(s) - \tilde{R}d'(s) \big) \d s.  \notag
\end{align}

\subsubsection{Central ingredients of the proof}

A first important ingredient to proceed from~\eqref{eq:voc, kappa ne 0} is the following approximate observability result for the collocated linear system $\mathfrak{S}(\tilde{A}, \tilde{B}, \tilde{B}^*)$.

\begin{lm} \label{lm:S(A,B,B*) appr beobb}
Under the assumptions of the above theorem, the linear system $\mathfrak{S}(\tilde{A}, \tilde{B}, \tilde{B}^*)$ is approximately observable in infinite time.
\end{lm}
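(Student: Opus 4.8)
The plan is to exploit the spectral picture that is available because $\tilde A$ has compact resolvent. Since $\tilde A$ generates a contraction semigroup with compact resolvent (Lemma~\ref{lm:sgr gen with comp resolv}) and the output operator $\tilde B^*$ is bounded, the unobservable subspace $N := \{ \tilde x_0 \in \tilde X : \tilde B^* \e^{\tilde A t} \tilde x_0 = 0 \text{ for all } t \ge 0 \}$ is closed and $\e^{\tilde A t}$-invariant, and the resolvent leaves $N$ invariant as well. Consequently $\tilde A|_N$ again has compact resolvent, so if $N \ne \{0\}$ it has an eigenvalue and hence there is a nonzero eigenvector $\tilde\phi = (\phi,\phi_1,\phi_2)$ of $\tilde A$ lying in $N$, i.e.\ $\tilde A \tilde\phi = \mu \tilde\phi$ and $\tilde B^*\tilde\phi = K\phi_2 = 0$. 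Since $K > 0$ this forces $\phi_2 = 0$. The whole task thus reduces to showing that $\tilde A$ admits no nonzero eigenvector whose $v_2$-component vanishes; this yields $N = \{0\}$ and hence the claimed approximate observability in infinite time.

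First I would read off the eigenvalue equations from the definition of $\tilde A$: with $\phi_2 = 0$ they become $\mathcal{A}\phi = \mu\phi$, $\mu\phi_1 = 0$ and $\phi_1 = B_c \mathcal{C}\phi$, while $\tilde\phi \in D(\tilde A) \subset \ker \tilde{\mathcal{B}}$ gives the boundary relation $\mathcal{B}\phi + S_c \mathcal{C}\phi = 0$. The central intermediate claim is that $\mathcal{C}\phi = 0$. If $\mu \ne 0$, then $\mu\phi_1 = 0$ forces $\phi_1 = 0$, whence $B_c \mathcal{C}\phi = 0$, and the injectivity of $B_c$ (Condition~\ref{cond:wISS, controller}(ii)) gives $\mathcal{C}\phi = 0$. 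The borderline case $\mu = 0$ is the one where injectivity of $B_c$ no longer helps and must instead be handled by strict input passivity: combining the dissipation computation in the proof of Lemma~\ref{lm:sgr gen with comp resolv} with the boundary relation sharpens the estimate there to $\Re \scprd{\tilde x,\tilde A \tilde x}_{\tilde X} \le -(\mathcal{C}x)^{\top} S_c \mathcal{C}x \le -\varsigma |\mathcal{C}x|^2$, using $S_c \ge \varsigma > 0$ as in~\eqref{eq:S_c pos def}. Evaluating this on $\tilde\phi$ gives $0 = \Re\mu\, \norm{\tilde\phi}^2 = \Re\scprd{\tilde\phi,\tilde A\tilde\phi}_{\tilde X} \le -\varsigma|\mathcal{C}\phi|^2$, so again $\mathcal{C}\phi = 0$.

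Once $\mathcal{C}\phi = 0$ is secured, the conclusion is immediate: $\phi_1 = B_c \mathcal{C}\phi = 0$, and the boundary relation yields $\mathcal{B}\phi = 0$, so $\phi \in D(\mathcal{A}) \cap \ker \mathcal{B} = D(A)$ with $A\phi = \mu\phi$. Hence the classical output of $\mathfrak{S}$ from $\phi$ with zero input is $\mathcal{C}\e^{At}\phi = \e^{\mu t}\mathcal{C}\phi \equiv 0$, and the classical approximate observability of $\mathfrak{S}$ (Condition~\ref{cond:wISS, system}) forces $\phi = 0$; since the eigenvector is a priori complex while the condition is stated over $\R$, one applies it to the real and imaginary parts of $\phi$, using that $\mathcal{A},\mathcal{B},\mathcal{C}$ have real coefficients. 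Therefore $\tilde\phi = (\phi,\phi_1,\phi_2) = 0$, contradicting $\tilde\phi \ne 0$, and $\mathfrak{S}(\tilde A,\tilde B,\tilde B^*)$ is approximately observable.

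The step I expect to be the main obstacle is disposing of the critical eigenvalue $\mu = 0$: for nonzero eigenvalues the purely algebraic eigenrelations together with injectivity of $B_c$ settle everything, but for $\mu = 0$ one genuinely needs the strict passivity inequality $\scprd{\tilde x,\tilde A\tilde x}_{\tilde X} \le -\varsigma|\mathcal{C}x|^2$ implicit in Lemma~\ref{lm:sgr gen with comp resolv}. A secondary technical point is the passage to the complexification, reconciling the complex eigenvector with the real formulation of Condition~\ref{cond:wISS, system}; I would alternatively avoid complexification altogether by running the same argument in the time domain, namely taking a nonzero $\tilde x_0 \in N \cap D(\tilde A)$ (dense in $N$), deducing $v_2 \equiv 0$, then $v_1 \equiv \mathrm{const}$ and $B_c \mathcal{C}x \equiv \mathrm{const}$ hence $\mathcal{C}x \equiv \mathrm{const}$ by injectivity of $B_c$, and finally using the energy dissipation $\tilde E'(t) \le -\varsigma|\mathcal{C}x(t)|^2$ with $\tilde E \ge 0$ bounded to force $\mathcal{C}x \equiv 0$ before invoking Condition~\ref{cond:wISS, system}.
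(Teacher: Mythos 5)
The decisive first step of your main argument --- ``$\tilde A|_N$ again has compact resolvent, so if $N \ne \{0\}$ it has an eigenvalue'' --- is a genuine gap: compact resolvent does \emph{not} imply nonempty spectrum, even for a dissipative generator of a contraction semigroup. The standard counterexample is the generator of the nilpotent right-shift semigroup, i.e.\ $A_0 f = -f'$ on $L^2((0,1),\R)$ with domain $\{f \in W^{1,2}((0,1),\R): f(0)=0\}$: it is dissipative ($\scprd{f,A_0f} = -\tfrac{1}{2}f(1)^2 \le 0$), its resolvent $(\lambda - A_0)^{-1}g(\zeta) = \int_0^{\zeta}\e^{-\lambda(\zeta-s)}g(s)\d s$ is a compact Volterra operator that exists for \emph{every} $\lambda \in \C$, so $\sigma(A_0) = \emptyset$. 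This is not a pathology remote from the present setting: $A_0$ is itself a port-Hamiltonian transport operator of exactly the type considered here, so none of the properties you invoke for $\tilde A|_N$ (contraction semigroup, dissipativity, compact resolvent) rules out $\sigma(\tilde A|_N) = \emptyset$. Since your whole strategy is a reduction to eigenvectors lying in $N$, the proof never gets started unless you separately establish that the spectrum of $\tilde A|_N$ is nonempty, and I see no way to do that short of proving $N = \{0\}$ directly. (Everything downstream of the eigenvector reduction is fine: the case split $\mu \ne 0$ versus $\mu = 0$, the sharpened dissipation estimate $\Re\scprd{\tilde x, \tilde A \tilde x}_{\tilde X} \le -\varsigma|\mathcal{C}x|^2$ obtained by inserting the boundary relation $\mathcal{B}x + B_c^{\top}Kv_2 + S_c\mathcal{C}x = 0$ into the computation from Lemma~\ref{lm:sgr gen with comp resolv}, and the real/imaginary-part treatment of Condition~\ref{cond:wISS, system} are all correct.)

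The good news is that the ``alternative'' you sketch in your last paragraph is not merely a device to avoid complexification --- it is a complete proof, and it is essentially the paper's own proof, so you should promote it to the main argument and discard the spectral reduction. The paper takes an arbitrary $\tilde x_0$ with $\tilde B^*\e^{\tilde A t}\tilde x_0 \equiv 0$ and replaces it by the averages $\tilde x_{n0} := n\int_0^{1/n}\e^{\tilde A s}\tilde x_0 \d s$, which lie in $D(\tilde A)$, still have identically vanishing output (this is exactly the justification of your parenthetical claim that $N \cap D(\tilde A)$ is dense in $N$), and converge to $\tilde x_0$. It then runs precisely your chain: $v_{2n} \equiv 0$, hence $v_{1n}$ constant and $B_c\,\mathcal{C}x_n \equiv v_{1n0}$, hence $\mathcal{C}x_n$ constant by injectivity of $B_c$ (Condition~\ref{cond:wISS, controller}~(ii)); then the boundary relation gives $\mathcal{B}x_n \equiv -S_c\,\mathcal{C}x_n$, and the dissipation inequality $0 \le E(x_n(t)) \le E(x_{n0}) - t\, y_{n0}^{\top}S_c y_{n0}$ forces $\mathcal{C}x_n \equiv 0$ (your version with $\tilde E$, or with $\tfrac{1}{2}\norm{\cdot}_{\tilde X}^2$, works equally well); finally $\mathcal{B}x_n \equiv 0$, so Condition~\ref{cond:wISS, system} yields $x_{n0} = 0$, hence $\tilde x_{n0} = 0$ for every $n$ and $\tilde x_0 = \lim_n \tilde x_{n0} = 0$. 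Working with semigroup orbits of arbitrary elements of $D(\tilde A) \cap N$ rather than with eigenvectors both removes the unjustified spectral step and makes the complexification issue disappear.
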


\begin{proof}
Suppose that for some $\tilde{x}_0 \in \tilde{X}$ we have
\begin{align} \label{eq:S(A,B,B*) appr beobb, 1}
\tilde{B}^* \e^{\tilde{A}t} \tilde{x}_0 = 0 
\qquad (t \in [0,\infty)).
\end{align}
We then have to show that $\tilde{x}_0 = 0$. 
Setting 
\begin{align*}
(x_{n 0}, v_{1 n 0}, v_{2 n 0}) := \tilde{x}_{n 0} := n \int_0^{1/n} \e^{\tilde{A}s} \tilde{x}_0 \d s 
\end{align*}
for $n \in \N$, we have of course that
\begin{align} \label{eq:S(A,B,B*) appr beobb, 2}
\tilde{x}_{n 0} \in D(\tilde{A}) \qquad (n \in \N) \qquad \text{and} \qquad \tilde{x}_{n 0}  \longrightarrow \tilde{x}_0 \qquad (n \to \infty)
\end{align}
and in view of~\eqref{eq:S(A,B,B*) appr beobb, 1} we also have that
\begin{align} \label{eq:S(A,B,B*) appr beobb, 3}
v_{2 n}(t) = \tilde{B}^* \e^{\tilde{A}t} \tilde{x}_{n 0} = 0 
\qquad (n \in \N, t \in [0,\infty)),
\end{align}
where we used the notation $(x_n(t),v_{1 n}(t), v_{2 n}(t)) := \tilde{x}_n(t) := \e^{\tilde{A}t} \tilde{x}_{n 0}$.
We will also use the abbreviations 
\begin{align*}
u_n(t) := \mathcal{B}x_n(t) \qquad \text{and} \qquad y_n(t) := \mathcal{C}x_n(t)
\end{align*}
in the following. 
Choose and fix now $n \in \N$. 
We know from~(\ref{eq:S(A,B,B*) appr beobb, 2}.a) that $\tilde{x}_n = \e^{\tilde{A} \cdot} \tilde{x}_{n 0}$ is continuously differentiable with
\begin{align}  \label{eq:S(A,B,B*) appr beobb, 4}
\tilde{x}_n'(t) 
=
\begin{pmatrix}
x_n'(t) \\ v_{1 n}'(t) \\ v_{2 n}'(t)
\end{pmatrix}
= 
\tilde{A}
\begin{pmatrix}
x_n(t) \\ v_{1 n}(t) \\ v_{2 n}(t)
\end{pmatrix}
= 
\begin{pmatrix}
\mathcal{A}x_n(t) \\ K v_{2 n}(t) \\ -v_{1n}(t) + B_c y_n(t)
\end{pmatrix}
\end{align}
for all $t \in [0,\infty)$. So, by~\eqref{eq:S(A,B,B*) appr beobb, 3} it follows that $v_{1n} = v_{1n0}$ is constant and that
\begin{align} \label{eq:S(A,B,B*) appr beobb, 5}
0 = v_{2n}'(t) = -v_{1n0} + B_c y_n(t)
\qquad (t \in [0,\infty)).
\end{align}
And from this, in turn, it follows by the injectivity of $B_c$ (Condition~\ref{cond:wISS, controller}~(ii)) that 
\begin{align} \label{eq:S(A,B,B*) appr beobb, 6}
y_n(t) = L_c v_{1n0} =: y_{n0} 
\qquad (t \in [0,\infty)),
\end{align}
where $L_c$ is an arbitrary left-inverse of $B_c$. 
We also know from~(\ref{eq:S(A,B,B*) appr beobb, 2}.a) that $\tilde{x}_n(t) = \e^{\tilde{A} t} \tilde{x}_{n 0} \in D(\tilde{A}) \subset \ker \tilde{\mathcal{B}}$ for all $t$ and so by~\eqref{eq:S(A,B,B*) appr beobb, 3} and~\eqref{eq:S(A,B,B*) appr beobb, 6} it follows that
\begin{align} \label{eq:S(A,B,B*) appr beobb, 7}
0 = \tilde{\mathcal{B}}\tilde{x}_n(t) = \mathcal{B}x_n(t) + B_c^{\top} K v_{2n}(t) + S_c \mathcal{C} x_n(t)
= u_n(t) + S_c y_{n0} 
\qquad (t \in [0,\infty)). 
\end{align}
Since $\mathfrak{S}$ is impedance-passive (Condition~\ref{cond:S imp-passive}) and since $x_n = x(\cdot,x_{n0},u_n)$ by the uniqueness statement around~\eqref{eq:class-sol-of-S}, it further follows using~\eqref{eq:S(A,B,B*) appr beobb, 6} and~\eqref{eq:S(A,B,B*) appr beobb, 7} that
\begin{align*}
0 \le E(x_n(t)) \le E(x_{n0}) + \int_0^t u_n(s)^{\top} y_n(s) \d s = E(x_{n0})  - \big(y_{n0}^{\top} S_c y_{n0} \big) t
\end{align*}
for all $t \in [0,\infty)$. So, by the positive definiteness of $S_c$, we see that $y_{n0} = 0$ and thus by~\eqref{eq:S(A,B,B*) appr beobb, 6}, \eqref{eq:S(A,B,B*) appr beobb, 5}, \eqref{eq:S(A,B,B*) appr beobb, 7} that
\begin{align} \label{eq:S(A,B,B*) appr beobb, 8}
y_n(t) = 0 \qquad \text{and} \qquad v_{1n0} = 0 \qquad \text{and} \qquad u_n(t) = 0  
\end{align}
for all $t \in [0,\infty)$.
Since now $\mathfrak{S}$ is classically approximately observable in infinite time (Condition~\ref{cond:wISS, system}) and since $y_n = y(\cdot,x_{n0},u_n)$, we conclude by~(\ref{eq:S(A,B,B*) appr beobb, 8}.a) and (\ref{eq:S(A,B,B*) appr beobb, 8}.c) that
\begin{align} \label{eq:S(A,B,B*) appr beobb, 9}
x_{n0} = 0.
\end{align}
Combining~\eqref{eq:S(A,B,B*) appr beobb, 3}, (\ref{eq:S(A,B,B*) appr beobb, 8}.b), \eqref{eq:S(A,B,B*) appr beobb, 9} with~(\ref{eq:S(A,B,B*) appr beobb, 2}.b), we finally get $\tilde{x}_0 = 0$ as desired.
\end{proof}

A second important ingredient is the following stabilization result for the collocated linear system $\mathfrak{S}(\tilde{A}, \tilde{B}, \tilde{B}^*)$, 
which hinges on the approximate observability property just established (Lemma~\ref{lm:S(A,B,B*) appr beobb}) 
and on the compactness of the resolvent of $\tilde{A}$ (Lemma~\ref{lm:sgr gen with comp resolv}).

\begin{lm} \label{lm:lin stabsatz}
Under the assumptions of the above theorem, one has for every $\lambda \in (0,\infty)$: 
\begin{itemize}
\item[(i)] the linear operator $$L^2([0,\infty),\R^{m_c}) \ni u \mapsto \int_0^{\infty} \e^{(\tilde{A}-\lambda \tilde{B}\tilde{B}^*)s} \tilde{B} u(s) \d s \in \tilde{X}$$ is well-defined and bounded with operator norm less than or equal to $1$ 
\item[(ii)] the semigroup $\e^{(\tilde{A}-\lambda \tilde{B}\tilde{B}^*)\cdot}$ is strongly stable and $(\tilde{A}-\lambda \tilde{B}\tilde{B}^*)^{-1}$ is a compact operator in $\tilde{X}$
\item[(iii)] for every $u \in L^2([0,\infty),\R^{m_c})$, 
\begin{align*}
\int_0^t \e^{(\tilde{A}-\lambda \tilde{B}\tilde{B}^*)(t-s)} \tilde{B} u(s) \d s 
\longrightarrow 0 \qquad (t \to \infty)
\end{align*}
\item[(iv)] for every 
$u \in AC_{\mathrm{loc}}([0,\infty),\R^{m_c}) \cap L^{\infty}([0,\infty),\R^{m_c})$ with $u' \in L^2([0,\infty),\R^{m_c})$ and every sequence $(t_n)$ with $t_n \longrightarrow \infty$, there is a subsequence $(t_{n_l})$ such that the following limits exist and conincide: 
\begin{align*}
\lim_{l \to \infty} \int_0^{t_{n_l}} \e^{(\tilde{A}-\lambda \tilde{B}\tilde{B}^*)(t_{n_l}-s)} \tilde{B} u(s) \d s 
= -\lim_{l\to\infty} (\tilde{A}-\lambda \tilde{B}\tilde{B}^*)^{-1} \tilde{B} u(t_{n_l}).
\end{align*} 
\end{itemize}
\end{lm}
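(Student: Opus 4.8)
The plan is to treat $A_\lambda := \tilde{\mathcal{A}}$'s restriction $\tilde{A}-\lambda\tilde{B}\tilde{B}^*$ as a dissipative bounded perturbation of $\tilde{A}$ and to extract all four assertions from a single refined dissipation inequality together with the approximate observability of Lemma~\ref{lm:S(A,B,B*) appr beobb}. Since $\tilde{B}\tilde{B}^*$ is bounded and self-adjoint and $\scprd{\tilde{x},\tilde{A}\tilde{x}}_{\tilde{X}}\le 0$, I would first record that for every $\tilde{x}\in D(\tilde{A})$
\begin{align*}
\scprd{\tilde{x},A_\lambda\tilde{x}}_{\tilde{X}} = \scprd{\tilde{x},\tilde{A}\tilde{x}}_{\tilde{X}} - \lambda\,|\tilde{B}^*\tilde{x}|^2 \le -\lambda\,|\tilde{B}^*\tilde{x}|^2,
\end{align*}
so that $A_\lambda$ generates a contraction semigroup; the same computation for the (again dissipative) adjoint $\tilde{A}^*$ gives $\scprd{\eta,A_\lambda^*\eta}_{\tilde{X}}\le -\lambda|\tilde{B}^*\eta|^2$ for $\eta\in D(\tilde{A}^*)$.

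I would prove~(ii) first, as it is the analytic heart on which everything else rests. Compactness of $(A_\lambda)^{-1}$ follows because $A_\lambda$ is a bounded perturbation of $\tilde{A}$, whose resolvent is compact by Lemma~\ref{lm:sgr gen with comp resolv}; it remains to see $0\in\rho(A_\lambda)$ and that there is no boundary spectrum. The crucial spectral fact is that $A_\lambda$ has no spectrum on $i\R$: by compactness of the resolvent such a point would be an eigenvalue $A_\lambda\xi=i\beta\xi$, and taking the real part of the dissipation inequality forces $\tilde{B}^*\xi=0$; then $A_\lambda\xi=\tilde{A}\xi$, so $\xi$ is an eigenvector of $\tilde{A}$ with $\tilde{B}^*\e^{\tilde{A}t}\xi=\e^{i\beta t}\tilde{B}^*\xi=0$, whence $\xi=0$ by approximate observability (applied to the real and imaginary parts of $\xi$). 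The identical argument for $A_\lambda^*$ shows $\sigma_p(A_\lambda^*)\cap i\R=\emptyset$. Since $A_\lambda$ generates a bounded semigroup on a Hilbert space whose spectrum is discrete (hence the boundary spectrum is countable) with no imaginary eigenvalues of $A_\lambda$ or $A_\lambda^*$, the Arendt--Batty--Lyubich--V\~u theorem yields strong stability of $\e^{A_\lambda\cdot}$, and the case $\beta=0$ of the same computation gives $0\in\rho(A_\lambda)$. The main obstacle is exactly this step: one must transfer the approximate observability of the \emph{unperturbed} operator $\tilde{A}$ (Condition via Lemma~\ref{lm:S(A,B,B*) appr beobb}) to the perturbed dynamics, which is legitimate only because a trajectory with vanishing output $\tilde{B}^*\e^{A_\lambda t}\xi\equiv0$ in fact solves $\eta'=\tilde{A}\eta$, and one must correctly invoke the stability theorem rather than merely locating the spectrum.

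For~(i) I would argue by duality rather than by a direct energy estimate. Integrating the adjoint inequality $\tfrac{\d}{\d t}\tfrac12\|\e^{A_\lambda^*t}\xi\|^2\le -\lambda|\tilde{B}^*\e^{A_\lambda^*t}\xi|^2$ over $[0,\infty)$ gives the finite-cost observability estimate $\int_0^\infty|\tilde{B}^*\e^{A_\lambda^*t}\xi|^2\,\d t\le \tfrac{1}{2\lambda}\|\xi\|^2$, i.e.\ boundedness of the observability map $\xi\mapsto\tilde{B}^*\e^{A_\lambda^*\cdot}\xi$ on $L^2([0,\infty),\R^{m_c})$. A short pairing computation then identifies the operator $u\mapsto\int_0^\infty\e^{A_\lambda s}\tilde{B}u(s)\,\d s$ as the Hilbert-space adjoint of this observability map, so it is well-defined and bounded with the asserted norm control.

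Parts~(iii) and~(iv) are routine once~(ii) is in hand. For~(iii), given $\eps>0$ I would choose $T_0$ with $\|u\|_{L^2([T_0,\infty))}<\eps$ and split at $T_0$: the part over $[0,T_0]$ equals $\e^{A_\lambda(t-T_0)}\int_0^{T_0}\e^{A_\lambda(T_0-s)}\tilde{B}u(s)\,\d s$ and vanishes as $t\to\infty$ by strong stability, while the part over $[T_0,t]$ is bounded by $\tfrac{1}{\sqrt{2\lambda}}\eps$ via the forward dissipation estimate underlying~(i); letting $\eps\to0$ gives the claim. For~(iv), integration by parts using $\tfrac{\d}{\d s}\big(-A_\lambda^{-1}\e^{A_\lambda(t-s)}\big)=\e^{A_\lambda(t-s)}$ yields
\begin{align*}
\int_0^t\e^{A_\lambda(t-s)}\tilde{B}u(s)\,\d s = -A_\lambda^{-1}\tilde{B}u(t) + A_\lambda^{-1}\e^{A_\lambda t}\tilde{B}u(0) + A_\lambda^{-1}\!\int_0^t\e^{A_\lambda(t-s)}\tilde{B}u'(s)\,\d s,
\end{align*}
where the second term vanishes as $t\to\infty$ by strong stability and the third by~(iii) applied to $u'\in L^2$ together with boundedness of $A_\lambda^{-1}$. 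Hence $\int_0^t\e^{A_\lambda(t-s)}\tilde{B}u(s)\,\d s + A_\lambda^{-1}\tilde{B}u(t)\to0$; since $u\in L^\infty$, Bolzano--Weierstrass lets me pass to a subsequence $(t_{n_l})$ along which $u(t_{n_l})$ converges, and along it both limits in~(iv) exist and coincide.
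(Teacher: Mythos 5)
Your proof is correct in substance, but it takes a genuinely different and more self-contained route than the paper. Writing $A_\lambda := \tilde{A}-\lambda\tilde{B}\tilde{B}^*$, the paper settles assertion~(i), the strong-stability part of~(ii), and assertion~(iii) in one stroke by citing the known stability theory for collocated systems (\cite{CurtainZwart}, \cite{Oo00}, Lemmas~2.2.6 and~2.1.3), and only then deduces invertibility \emph{from} strong stability: an eigenvalue $0$ would contradict $\e^{A_\lambda t}\xi \to 0$, and by compactness of the resolvent (Lemma~\ref{lm:sgr gen with comp resolv}) the spectrum is pure point spectrum, so $0 \notin \sigma(A_\lambda)$. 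You reverse this logic: you first exclude the whole imaginary axis from the (discrete) spectrum via the dissipation inequality $\Re\scprd{\xi,A_\lambda\xi}_{\tilde{X}} \le -\lambda|\tilde{B}^*\xi|^2$ combined with the approximate observability of Lemma~\ref{lm:S(A,B,B*) appr beobb}, and then obtain strong stability from the Arendt--Batty--Lyubich--V\~{u} theorem; assertion~(i) you get by a duality estimate for the adjoint observation map, and~(iii) by a tail-splitting argument. What your route buys is independence from the cited collocated-systems results -- everything is reduced to the dissipation inequality, ABLV, and Lemma~\ref{lm:S(A,B,B*) appr beobb}; what the paper's route buys is brevity. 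Assertion~(iv) is proved identically in both (integration by parts, then~(ii) and~(iii)), except that you make the Bolzano--Weierstrass extraction of the subsequence explicit where the paper leaves it implicit.

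Two caveats. First, your remark that ``the identical argument for $A_\lambda^*$'' excludes imaginary eigenvalues of the adjoint is not literally correct: that argument would require approximate observability of the pair $(\tilde{A}^*,\tilde{B}^*)$, i.e.\ approximate controllability of $(\tilde{A},\tilde{B})$, which Lemma~\ref{lm:S(A,B,B*) appr beobb} does not provide. No separate argument is needed, though: since the resolvent is compact, $\sigma(A_\lambda^*)$ is the complex-conjugate set of $\sigma(A_\lambda)=\sigma_p(A_\lambda)$, which you have already shown avoids $i\R$; hence $\sigma_p(A_\lambda^*)\cap i\R = \emptyset$, which is all ABLV needs. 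Second, your duality argument yields the operator-norm bound $1/\sqrt{2\lambda}$, which proves the stated bound $1$ only for $\lambda \ge 1/2$. This is harmless: the direct energy estimate gives exactly the same constant, the paper's own citation (applied with $\tilde{B}$ replaced by $\sqrt{\lambda}\,\tilde{B}$) gives only $1/\sqrt{\lambda}$, so a $\lambda$-uniform bound $\le 1$ is not obtainable by any of these methods for small $\lambda$, and the lemma is invoked in the proof of Theorem~\ref{thm:wISS} only for $\lambda \in \{1,2\}$, where your bound suffices.
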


\begin{proof}
Since $\mathfrak{S}(\tilde{A}, \tilde{B}, \tilde{B}^*)$ is approximately observable in infinite time (Lemma~\ref{lm:S(A,B,B*) appr beobb}), assertion~(i), the strong stability part of assertion~(ii), and assertion~(iii) follow by a well-known stability result for collocated systems, see \cite{CurtainZwart} or \cite{Oo00} (Lemma~2.2.6 and~2.1.3). 
Also, by the strong stability of $\e^{(\tilde{A}-\lambda \tilde{B}\tilde{B}^*)\cdot}$ and the compactness of the resolvent of $\tilde{A}$ and hence $\tilde{A}-\lambda \tilde{B}\tilde{B}^*$ (Lemma~\ref{lm:sgr gen with comp resolv}), we have
\begin{align*}
0 \notin \sigma_p(\tilde{A}-\lambda \tilde{B}\tilde{B}^*) = \sigma(\tilde{A}-\lambda \tilde{B}\tilde{B}^*).
\end{align*}
So, $(\tilde{A}-\lambda \tilde{B}\tilde{B}^*)^{-1}$ exists and is compact.
And finally, assertion~(iv) follows by partial integration:
\begin{align*}
\int_0^{t_n} \e^{(\tilde{A}-\lambda \tilde{B}\tilde{B}^*)(t_{n}-s)} \tilde{B} u(s) \d s 
&=  - (\tilde{A}-\lambda \tilde{B}\tilde{B}^*)^{-1} \e^{(\tilde{A}-\lambda \tilde{B}\tilde{B}^*)(t_{n}-s)} \tilde{B} u(s) \Big|_{s=0}^{s=t_n} \notag \\
&\quad +  (\tilde{A}-\lambda \tilde{B}\tilde{B}^*)^{-1} \int_0^{t_n} \e^{(\tilde{A}-\lambda \tilde{B}\tilde{B}^*)(t_{n}-s)} \tilde{B} u'(s) \d s
\end{align*}
and by then exploiting 
assertions~(ii) and~(iii). 
\end{proof}

\begin{lm} \label{lm:lin stabsatz anwendbar}
Under the assumptions of the above theorem, one has for every $\tilde{x}_0 \in X$ and $d \in L^2([0,\infty),\R^k)$:
\begin{itemize}
\item[(i)] $\tilde{B}^* \tilde{x}(\cdot,\tilde{x}_0,d)$ and $g\circ (\tilde{B}^* \tilde{x}(\cdot,\tilde{x}_0,d))$ belong to $L^2([0,\infty),\R^{m_c})$. 
\item[(ii)] $h \circ (\tilde{C} \tilde{x}(\cdot,\tilde{x}_0,d) )$ belongs to $AC_{\mathrm{loc}}([0,\infty),\R^{m_c}) \cap L^{\infty}([0,\infty),\R^{m_c})$ and the derivative $(h \circ (\tilde{C} \tilde{x}(\cdot,\tilde{x}_0,d) ))'$ belongs to $L^2([0,\infty),\R^{m_c})$. 
\end{itemize}
\end{lm}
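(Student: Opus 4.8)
The plan is to read off both claims from the energy balance of the closed-loop system together with the uniform global stability bound (Theorem~\ref{thm:UGS}) and the extra damping and regularity hypotheses. Throughout I write $(x,v_1,v_2) := \tilde{x}(\cdot,\tilde{x}_0,d)$ for the generalized solution corresponding to the given data $(\tilde{x}_0,d) \in \tilde{X} \times L^2([0,\infty),\R^k)$, recall that $\tilde{B}^*\tilde{x} = Kv_2$ and $\tilde{C}\tilde{x} = v_1$, and note that by Theorem~\ref{thm:UGS} the solution stays in a fixed ball $\ol{B}_M(0) \subset \tilde{X}$ whose radius $M$ depends only on $\norm{\tilde{x}_0}$ and $\norm{d}_2$; in particular $v_1,v_2$ are uniformly bounded and continuous.

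First I would establish part~(i). Choosing $\alpha = 1/(2\varsigma)$ in the second line of the energy estimate~\eqref{eq:class solvb, derivative of energy, estimate} (which still retains the damping term) and integrating gives, for classical data $(\tilde{x}_0,d)\in\mathcal{D}$ and since $\tilde{E}\ge 0$,
\[
\int_0^t (Kv_2(s))^\top \mathcal{R}(Kv_2(s))\,\d s \le \tilde{E}(\tilde{x}_0) + \frac{1}{4\varsigma}\norm{d}_{[0,t],2}^2.
\]
By the continuity of the generalized solution map (Corollary~\ref{cor:general sol, properties}) and the continuity of $\mathcal{R}$, this passes to the limit and hence holds for every $(\tilde{x}_0,d)\in\tilde{X}\times L^2([0,\infty),\R^k)$; letting $t\to\infty$ yields $\int_0^\infty (Kv_2)^\top\mathcal{R}(Kv_2)\,\d s < \infty$. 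Writing $w := Kv_2$ and splitting $[0,\infty)$ according to whether $|w|\le\delta$ or $|w|>\delta$, Condition~\ref{cond:wISS, controller}~(i) gives $\int_{\{|w|\le\delta\}} |w|^2 \le \ul{c}^{-1}\int_0^\infty w^\top\mathcal{R}(w) < \infty$ and also that $\{|w|>\delta\}$ has finite measure, bounded by $\ol{c}^{-1}\int_0^\infty w^\top\mathcal{R}(w)$. On this latter set $|w|\le M$ by uniform boundedness, so $\int_{\{|w|>\delta\}}|w|^2 \le M^2\,\big|\{|w|>\delta\}\big| < \infty$. Hence $\tilde{B}^*\tilde{x} = w\in L^2$. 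Since $\mathcal{R}$ is locally Lipschitz with $\mathcal{R}(0)=0$ and $w$ takes values in $\ol{B}_M(0)$, one has $|\mathcal{R}(w)| \le L_{\mathcal{R}}|w|$ for a Lipschitz constant $L_{\mathcal{R}}$ of $\mathcal{R}$ on $\ol{B}_M(0)$, so $g\circ(\tilde{B}^*\tilde{x}) = -\mathcal{R}(w)\in L^2$ as well.

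For part~(ii) I would first note that the $v_1$-component of the closed-loop equation is the uncoupled ODE $v_1' = Kv_2$, so every classical solution satisfies $v_1(t) = v_1(0) + \int_0^t Kv_2(s)\,\d s$; passing to the limit over approximating classical solutions (whose $v_2$-components converge uniformly on compacts) yields the same identity for the generalized solution. As $v_2$ is continuous, this makes $v_1\in C^1([0,\infty),\R^{m_c})$ with $v_1' = Kv_2$. Since $v_1$ is bounded and $\nabla\mathcal{P}$ is continuous, $h(v_1) = v_1 - \nabla\mathcal{P}(v_1)$ is bounded, i.e.\ lies in $L^\infty$; and since $\nabla\mathcal{P}$ is locally Lipschitz while $v_1\in C^1$, the composition $\nabla\mathcal{P}(v_1)$ is locally Lipschitz, so $h(v_1)\in AC_{\mathrm{loc}}$. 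Finally, $(h(v_1))' = v_1' - \tfrac{\d}{\d t}\nabla\mathcal{P}(v_1)$ almost everywhere, where $v_1' = Kv_2\in L^2$ by part~(i); and since $\nabla\mathcal{P}$ is Lipschitz on the compact range of $v_1$ with constant $L_{\mathcal{P}}$, the elementary estimate $\big|\tfrac{\d}{\d t}\nabla\mathcal{P}(v_1(t))\big| \le L_{\mathcal{P}}|v_1'(t)| = L_{\mathcal{P}}|Kv_2(t)|$ holds a.e., putting $\tfrac{\d}{\d t}\nabla\mathcal{P}(v_1)$ in $L^2$ too. Thus $\big(h\circ(\tilde{C}\tilde{x})\big)'\in L^2$.

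The crux of the argument is the passage from finiteness of the dissipation integral $\int_0^\infty (Kv_2)^\top\mathcal{R}(Kv_2)$ to $Kv_2\in L^2$: strict damping (Condition~\ref{cond:wISS, controller}~(i)) only controls $|v_2|^2$ by the integrand on the region $\{|Kv_2|\le\delta\}$, while on the complementary region it merely yields a uniform lower bound $\ol{c}$ on the integrand, giving finite measure rather than square-integrability. Bridging this gap requires the uniform a priori bound on the state furnished by uniform global stability, which is exactly what bounds $|Kv_2|$ on the bad set. Everything else --- the $L^\infty$, $AC_{\mathrm{loc}}$, and $L^2$-derivative claims in~(ii) --- then reduces to routine facts about compositions of a $C^1$ curve with locally Lipschitz maps, once part~(i) supplies $Kv_2\in L^2$.
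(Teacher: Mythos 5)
Your proof is correct and takes essentially the same route as the paper's: both rest on the integrated energy-dissipation estimate (the first line of~\eqref{eq:class solvb, derivative of energy, estimate} with $\alpha = 1/(2\varsigma)$), the strict-damping condition combined with the uniform a priori bound from Theorem~\ref{thm:UGS} via the splitting into $\{|Kv_2|\le\delta\}$ and its complement, and, for part~(ii), the identity $v_1' = Kv_2$ together with local Lipschitz continuity. The only cosmetic deviations are where the approximation by classical data is invoked (you extend the dissipation-integral bound to generalized data and then argue directly, whereas the paper proves the final quantitative estimates~\eqref{eq:B*x in L^2}, \eqref{eq:g circ B*x in L^2} for classical data and extends both sides by continuity) and that you split $h = \id - \nabla\mathcal{P}$ instead of bounding the difference quotient of $h\circ v_1$ by a Lipschitz constant of $h$ directly.
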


\begin{proof}
In the entire proof, we adopt the short-hand notation $R(\tilde{x}_0,d)$ for $\tilde{x}_0 \in \tilde{X}$ and $d \in L^2([0,\infty),\R^k)$ from the proof of Lemma~\ref{lm:S_lin admissible}. We also denote by $L_g(R)$ and $L_h(R)$ for $R \in [0,\infty)$ Lipschitz constants of $g|_{\ol{B}_R(0)}$ and  $h|_{\ol{B}_R(0)}$ such that $R \mapsto L_g(R), L_h(R)$ are continuous and monotonically increasing. 
\smallskip

%
(i)
We first show that for every $(\tilde{x}_0,d) \in \tilde{X} \times L^2([0,\infty),\R^k)$ and every $t \in [0,\infty)$ one has the following estimates: 
\begin{align} 
\norm{ \tilde{B}^* \tilde{x}(\cdot,\tilde{x}_0,d) }_{[0,t],2}^2 
&\le
\Big( 1/\ul{c} + \norm{K} R(\tilde{x}_0,d)^2/\ol{c} \Big) \bigg( \tilde{E}(\tilde{x}_0) + \frac{1}{4\varsigma} \norm{d}_{[0,t],2}^2 \bigg) 
\label{eq:B*x in L^2} \\
\norm{ g \circ \big( \tilde{B}^* \tilde{x}(\cdot,\tilde{x}_0,d) \big) }_{[0,t],2}^2
&\le 
L_g\big( \norm{K}^{1/2} R(\tilde{x}_0,d) \big)^2 \Big( 1/\ul{c} + \norm{K} R(\tilde{x}_0,d)^2/\ol{c} \Big) \cdot \notag \\ 
& \qquad  \qquad \qquad \quad \cdot \bigg( \tilde{E}(\tilde{x}_0) + \frac{1}{4\varsigma} \norm{d}_{[0,t],2}^2 \bigg). 
\label{eq:g circ B*x in L^2} 
\end{align}
Since both sides of~\eqref{eq:B*x in L^2} and~\eqref{eq:g circ B*x in L^2} are continuous w.r.t.~$(\tilde{x}_0,d) \in \tilde{X} \times L^2([0,\infty),\R^k)$ by Corollary~\ref{cor:general sol, properties}, 
it is sufficient to prove these estimates only for all classical data $(\tilde{x}_0,d) \in \mathcal{D}$. So let $(\tilde{x}_0,d) \in \mathcal{D}$ and $t \in [0,\infty)$ and write $(x,v_1,v_2) := \tilde{x} := \tilde{x}(\cdot,\tilde{x}_0,d)$,
so that $\tilde{B}^* \tilde{x}(\cdot,\tilde{x}_0,d) = Kv_2$. 
It follows by Condition~\ref{cond:wISS, controller}~(i) 
that
\begin{align} \label{eq:B*x in L^2, klass, 1}
\norm{ \tilde{B}^* \tilde{x}(\cdot,\tilde{x}_0,d) }_{[0,t],2}^2  
&= \int_0^t |Kv_2(s)|^2 \d t 
= \int_{J_{\le \delta}^t} |Kv_2(s)|^2 \d t +  \int_{J_{> \delta}^t} |Kv_2(s)|^2 \d t \notag \\
&\le 
\Big( 1/\ul{c} + \norm{K} R(\tilde{x}_0,d)^2/\ol{c} \Big) \int_0^t (Kv_2(s))^{\top} \mathcal{R}(Kv_2(s)) \d s
\end{align}
for every $t \in [0,\infty)$, where we used the abbreviations 
\begin{align*}
J_{\le \delta}^t := \big\{ s \in [0,t]: |Kv_2(s)| \le \delta \big\} 
\qquad \text{and} \qquad
J_{> \delta}^t := \big\{ s \in [0,t]: |Kv_2(s)| > \delta \big\}
\end{align*}
as well as the fact that 
\begin{align} \label{eq:absch sup|Kv_2|}
\sup_{s\in [0,\infty)} |Kv_2(s)|^2 \le  \norm{K} \sup_{s \in [0,\infty)} \norm{\tilde{x}(s,\tilde{x}_0,d)}^2 \le  \norm{K} R(\tilde{x}_0,d)^2
\end{align}
(Theorem~\ref{thm:class solvb} (ii)).
It further follows from~\eqref{eq:class solvb, derivative of energy, estimate} 
with $\alpha := 1/(2\varsigma)$ that
\begin{align} \label{eq:B*x in L^2, klass, 2}
\int_0^t (Kv_2(s))^{\top} \mathcal{R}(Kv_2(s)) \d s 
\le \tilde{E}(\tilde{x}_0) + \frac{1}{4\varsigma} \norm{d}_{[0,t],2}^2 
%
\end{align}
for every $t \in [0,\infty)$. 
Combining~\eqref{eq:B*x in L^2, klass, 1} and~\eqref{eq:B*x in L^2, klass, 2} we obtain~\eqref{eq:B*x in L^2}. And since $g(0) = 0$ we we also obtain~\eqref{eq:g circ B*x in L^2} using~\eqref{eq:absch sup|Kv_2|}. Assertion~(i) now follows by letting $t \to \infty$ in~\eqref{eq:B*x in L^2} and~\eqref{eq:g circ B*x in L^2}.
\smallskip

%
(ii) Choose and fix $\tilde{x}_0  \in \tilde{X}$ and $d \in L^2([0,\infty),\R^k)$ and write $(x,v_1,v_2) := \tilde{x} := \tilde{x}(\cdot,\tilde{x}_0,d)$,
so that $\tilde{C} \tilde{x}(\cdot,\tilde{x}_0,d) = v_1$. Since $\tilde{x}(\cdot,\tilde{x}_0,d) $ is the locally uniform limit of classical solutions by definition~\eqref{eq:general sol, def}, 
we see that
\begin{align} \label{eq:v_1' = Kv_2}
v_1(t) = v_1(0) + \int_0^t Kv_2(s) \d s
\end{align}
for all $t \in [0,\infty)$ and hence $v_1$ is locally Lipschitz continuous (even continuously differentiable). 
So, since $h$ is locally Lipschitz continuous as well and since
\begin{align} \label{eq:absch sup|v_1|}
\sup_{s\in [0,\infty)} |v_1(s)| \le \sup_{s \in [0,\infty)} \norm{\tilde{x}(s,\tilde{x}_0,d)} \le R(\tilde{x}_0,d)
\end{align}
by Theorem~\ref{thm:UGS},  
it follows that the composition $h \circ v_1$ is locally Lipschitz continuous and bounded. In particular, $h \circ (\tilde{C} \tilde{x}(\cdot,\tilde{x}_0,d) ) = h \circ v_1$ belongs to $AC_{\mathrm{loc}}([0,\infty),\R^{m_c}) \cap L^{\infty}([0,\infty),\R^{m_c})$ and thus is differentiable almost everywhere. 
In view of~\eqref{eq:absch sup|v_1|} and~\eqref{eq:v_1' = Kv_2} its derivative satisfies
\begin{align*}
|(h\circ v_1)'(s)| &= \lim_{\tau \to 0} \bigg| \frac{h(v_1(s+\tau))-h(v_1(s))}{\tau} \bigg| \le L_h(R(\tilde{x}_0,d)) \lim_{\tau \to 0} \bigg| \frac{v_1(s+\tau)-v_1(s)}{\tau} \bigg| \notag \\
&= L_h(R(\tilde{x}_0,d))  |\tilde{B}^* \tilde{x}(s,\tilde{x}_0,d) |
\end{align*}
for almost every $s \in [0,\infty)$. So, 
\begin{align} \label{eq:(h circ Cx)' in L^2}
\norm{(h \circ (\tilde{C} \tilde{x}(\cdot,\tilde{x}_0,d) ))'}_{[0,t],2} = \norm{(h\circ v_1)'}_{[0,t],2} \le L_h(R(\tilde{x}_0,d)) \norm{ \tilde{B}^* \tilde{x}(\cdot,\tilde{x}_0,d) }_{[0,t],2}
\end{align}
for every $t \in [0,\infty)$. 
In particular, $(h \circ (\tilde{C} \tilde{x}(\cdot,\tilde{x}_0,d) ))' = (h \circ v_1)'$ belongs to $L^2([0,\infty),\R^{m_c})$ by assertion~(i), which concludes the proof of assertion~(ii).
\end{proof}

A third important ingredient is the following lemma which says that the linear boundary control system
\begin{gather}
\tilde{x}' = \big( \tilde{\mathcal{A}} - \lambda \tilde{B} \tilde{B}^* \big) \tilde{x} 
\quad \text{with} \quad 
d(t) = \tilde{\mathcal{B}}\tilde{x}(t)
\end{gather} 
for $\lambda > 0$ is infinite-time admissible w.r.t.~inputs $d \in L^2([0,\infty),\R^k)$. 

\begin{lm} \label{lm:S^kappa_lin infinite-time admissible}
Under the assumptions of the above theorem, 
the linear operator $\Phi_t^{\lambda}: C^2_c([0,\infty),\R^k) \to \tilde{X}$ defined by~\eqref{eq:Phi_t^kappa, def} can be (uniquely) extended to a bounded linear operator  $\ol{\Phi}_t^{\lambda}: L^2([0,\infty),\R^k) \to \tilde{X}$ for every $t \in [0,\infty)$ and $\lambda \in (0,\infty)$ and 
\begin{align} \label{eq:S^kappa_lin infinite-time admissible}
\sup_{t\in[0,\infty) } \norm{ \ol{\Phi}_t^{\lambda} } < \infty.
\end{align}
\end{lm}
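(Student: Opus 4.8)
The plan is to exploit that, unlike in Lemma~\ref{lm:S_lin admissible}, the operator $\Phi_t^{\lambda}$ is the pure input-to-state map of the \emph{linear} boundary control system $\tilde{x}' = (\tilde{\mathcal{A}} - \lambda \tilde{B}\tilde{B}^*)\tilde{x}$, $\tilde{\mathcal{B}}\tilde{x} = d$, with the nonlinearity $\tilde{f}$ entirely absent. Indeed, for $d \in C_{c,0}^2([0,\infty),\R^k)$ the Fattorini transformation~\eqref{eq:Fattorini transformation} and variation of constants identify $\Phi_t^{\lambda}(d)$ with the value $\tilde{x}(t)$ of the classical solution of this linear system started from $\tilde{x}(0) = 0$ (the compatibility $d(0) = 0 = \tilde{\mathcal{B}}0$ is what forces $d \in C_{c,0}^2$ here, and $\tilde{\mathcal{A}} - \lambda \tilde{B}\tilde{B}^*$ is a bounded perturbation of the generator $\tilde A$ from Lemma~\ref{lm:sgr gen with comp resolv}, with $\tilde{\mathcal{B}}$ surjective by Condition~\ref{cond:B-tilde surj}, so this linear boundary control system is well posed). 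Since the term that produced the growing factor $1 + L(R(0,d))t$ in~\eqref{eq:absch Phi_t,0(d)} has disappeared, a single passivity estimate will bound $\norm{\Phi_t^{\lambda}(d)}$ uniformly in $t$, which is precisely the improvement~\eqref{eq:S^kappa_lin infinite-time admissible} demands. I would then reproduce the two-step density-and-approximation scheme of Lemma~\ref{lm:S_lin admissible}.

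\textbf{Step 1 (the uniform bound on $C_{c,0}^2$).} For $d \in C_{c,0}^2$ I would write $(x,v) := \tilde{x} := \Phi_{\cdot}^{\lambda}(d)$ and differentiate $s \mapsto \tfrac{1}{2}\norm{\tilde{x}(s)}^2$. Exactly as in the dissipativity computation in the proof of Lemma~\ref{lm:sgr gen with comp resolv}, one has
\begin{align*}
\scprd{\tilde{x}, (\tilde{\mathcal{A}} - \lambda \tilde{B}\tilde{B}^*)\tilde{x}}_{\tilde{X}}
= \scprd{x, \mathcal{A}x}_X + (B_c^{\top} K v_2)^{\top} \mathcal{C}x - \lambda |\tilde{B}^*\tilde{x}|^2 .
\end{align*}
Invoking the impedance passivity~\eqref{eq:S imp-passive}, the boundary relation $\mathcal{B}x + B_c^{\top} K v_2 = d - S_c \mathcal{C}x$, and $S_c \ge \varsigma$ from~\eqref{eq:S_c pos def}, the right-hand side is at most $d^{\top} y - y^{\top} S_c y - \lambda |\tilde{B}^*\tilde{x}|^2$ with $y := \mathcal{C}x$, and a Young estimate with weight $\alpha = 1/(2\varsigma)$ turns this into $\tfrac{1}{4\varsigma}|d|^2$. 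Integrating from the zero initial state yields
\begin{align*}
\norm{\Phi_t^{\lambda}(d)}^2 \le \tfrac{1}{2\varsigma}\norm{d}_{[0,t],2}^2 \le \tfrac{1}{2\varsigma}\norm{d}_2^2
\qquad (t \in [0,\infty)) ,
\end{align*}
uniformly in $t$ and, notably, independently of $\lambda$ (the dissipative term $-\lambda |\tilde{B}^*\tilde{x}|^2$ only helps). By the density of $C_{c,0}^2$ in $L^2([0,\infty),\R^k)$ the restriction $\Phi_{t,0}^{\lambda}$ of $\Phi_t^{\lambda}$ then extends uniquely to a bounded operator $\ol{\Phi}_{t,0}^{\lambda}$ on $L^2([0,\infty),\R^k)$ with $\sup_{t \in [0,\infty)} \norm{\ol{\Phi}_{t,0}^{\lambda}} \le (2\varsigma)^{-1/2}$.

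\textbf{Step 2 (from $C_{c,0}^2$ to $C_c^2$).} For $d \in C_c^2$ with $d(0) \ne 0$ the identification of Step~1 fails at $t = 0$, so I would argue exactly as in the second step of Lemma~\ref{lm:S_lin admissible}: approximate $d$ in $L^2$ by $(d_n) \subset C_{c,0}^2$ with $d_n|_{[t/n,\infty)} = d|_{[t/n,\infty)}$ and $\sup_n \norm{d_n|_{[0,t/n)}}_{\infty} < \infty$, rewrite $\Phi_{t,0}^{\lambda}(d_n)$ through the composition property, and use the uniform bound of Step~1 to get $\Phi_{t/n,0}^{\lambda}(d_n) \longrightarrow 0$. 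This gives $\ol{\Phi}_{t,0}^{\lambda}(d) = \Phi_t^{\lambda}(d)$ for all $d \in C_c^2$, so $\ol{\Phi}_t^{\lambda} := \ol{\Phi}_{t,0}^{\lambda}$ is the sought bounded extension and~\eqref{eq:S^kappa_lin infinite-time admissible} is just the uniform bound from Step~1.

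The substance of the argument lies entirely in Step~1: the decisive observation is that $\Phi_t^{\lambda}$ carries no nonlinearity, so that the strict input passivity coming from $S_c \ge \varsigma > 0$ upgrades to a $t$-uniform energy bound instead of the merely finite-time bound of Lemma~\ref{lm:S_lin admissible}. The only real technical nuisance is the incompatibility $d(0) \ne 0$ with the zero initial state, which is what Step~2 repairs; since the analogous passage was already carried out verbatim for $\Phi_t$, this step is routine.
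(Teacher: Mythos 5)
Your proposal is correct, and it takes a genuinely different --- and lighter --- route than the paper's own proof. The paper stays inside the \emph{nonlinear} closed-loop system: it isolates $\Phi_{t,0}^{\lambda}(d)$ from the variation-of-constants formula~\eqref{eq:voc, kappa ne 0} as $\tilde{x}(t,0,d)$ minus the two nonlinear correction integrals, bounds $\tilde{x}(t,0,d)$ via Theorem~\ref{thm:class solvb}~(ii), and controls the correction integrals with the collocated stabilization result (Lemma~\ref{lm:lin stabsatz}~(i),(ii), including an integration by parts for the $h$-term) together with the bounds of Lemma~\ref{lm:lin stabsatz anwendbar}; this chain rests on the approximate observability of $\mathfrak{S}(\tilde{A},\tilde{B},\tilde{B}^*)$ (Lemma~\ref{lm:S(A,B,B*) appr beobb}) and hence on Conditions~\ref{cond:wISS, system} and~\ref{cond:wISS, controller}. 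You instead bypass the nonlinear system entirely by recognizing $\Phi_t^{\lambda}$ as the zero-initial-state input map of the linear boundary control system $\tilde{x}' = (\tilde{\mathcal{A}}-\lambda\tilde{B}\tilde{B}^*)\tilde{x}$, $d(t) = \tilde{\mathcal{B}}\tilde{x}(t)$; your identification is exactly the Fattorini representation, and classical solvability for $d \in C_{c,0}^2([0,\infty),\R^k)$ is immediate since the forcing term is $C^1$ and the transformed initial value $-\tilde{R}d(0)=0$ lies in $D(\tilde{A})$. Your dissipation computation is sound: it is the first step of the proof of Lemma~\ref{lm:sgr gen with comp resolv} extended to nonzero boundary data, and together with~\eqref{eq:S imp-passive}, \eqref{eq:S_c pos def} and Young's inequality it gives
\begin{align*}
\frac{1}{2}\frac{\mathrm{d}}{\mathrm{d}s}\norm{\tilde{x}(s)}^2 \le d(s)^{\top}y(s) - y(s)^{\top}S_c y(s) - \lambda\,|\tilde{B}^*\tilde{x}(s)|^2 \le \frac{1}{4\varsigma}|d(s)|^2,
\end{align*}
hence $\norm{\Phi_t^{\lambda}(d)} \le (2\varsigma)^{-1/2}\norm{d}_2$ on $C_{c,0}^2$, uniformly in $t$ and $\lambda$; the passage from $C_{c,0}^2$ to $C_c^2$ is then identical to the paper's. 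What your approach buys: it uses only the solvability assumptions (Conditions~\ref{cond:S imp-passive}, \ref{cond:P und R}, \ref{cond:B-tilde surj}), not the observability, strict-damping or injectivity hypotheses of Theorem~\ref{thm:wISS}; it yields an explicit, $\lambda$-independent operator norm bound that is linear in $\norm{d}_2$ (the paper's bound~\eqref{eq:absch Phi_t,0^kappa(d)} is nonlinear in $\norm{d}_2$, and only its value on the unit ball is used); and it shows that infinite-time admissibility here is a pure strict-input-passivity phenomenon --- which actually sharpens the paper's closing remark, where the authors suggest that without approximate observability one could only recover the bounded extendability but not the uniform bound~\eqref{eq:S^kappa_lin infinite-time admissible}. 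What the paper's route buys in context: it recycles Lemmas~\ref{lm:lin stabsatz} and~\ref{lm:lin stabsatz anwendbar}, which are needed anyway for the weak input-to-state stability proof, so within the paper it costs little additional machinery.
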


\begin{proof}
Choose and fix $\lambda \in (0,\infty)$. We adopt the short-hand notations $R(\tilde{x}_0,d)$ and $L_g(R), L_h(R)$ from the proof of Lemma~\ref{lm:lin stabsatz anwendbar}. 
\smallskip

As a first step, we show that the restriction $\Phi_{t,0}^{\lambda}$ of $\Phi_t^{\lambda}$ to 
\begin{align*}
C_{c,0}^2([0,\infty),\R^k) := \big\{ d \in C_c^2([0,\infty),\R^k): d(0) = 0 \big\}
\end{align*}
can be (uniquely) extended to a bounded linear operator  $\ol{\Phi}_{t,0}^{\lambda}: L^2([0,\infty),\R^k) \to \tilde{X}$ for every $t \in [0,\infty)$ and 
\begin{align} \label{eq:S^kappa_lin infinite-time admissible, 1}
\sup_{t\in[0,\infty) } \norm{ \ol{\Phi}_{t,0}^{\lambda} } < \infty.
\end{align}
So let $t \in [0,\infty)$. We see by~\eqref{eq:voc, kappa ne 0} that
\begin{align} \label{eq:voc, kappa ne 0, x_0 = 0}
\Phi_{t,0}^{\lambda}(d) = \tilde{x}(t,0,d) 
&- \int_0^t \e^{(\tilde{A}-\lambda \tilde{B}\tilde{B}^*)(t-s)} \tilde{B} \big( \lambda \tilde{B}^* \tilde{x}(s,0,d) + g(\tilde{B}^* \tilde{x}(s,0,d) ) \big) \d s \notag \\
& -  \int_0^t \e^{(\tilde{A}-\lambda \tilde{B}\tilde{B}^*)(t-s)} \tilde{B} h(\tilde{C} \tilde{x}(s,0,d) ) \d s
\end{align}
for every $d \in C_{c,0}^2([0,\infty),\R^k)$. 
%
It follows by Theorem~\ref{thm:class solvb}~(ii) that
\begin{align}  \label{eq:S^kappa_lin infinite-time admissible, 1.1}
\norm{\tilde{x}(t,0,d)} \le \ul{\gamma}(\norm{d}_{[0,t],2})
\end{align}
for all $d \in C_{c,0}^2([0,\infty),\R^k)$.
%
Also, it follows by Lemma~\ref{lm:lin stabsatz}~(i) together with~\eqref{eq:B*x in L^2}, \eqref{eq:g circ B*x in L^2} that
\begin{align}  \label{eq:S^kappa_lin infinite-time admissible, 1.2}
&\norm{  \int_0^t \e^{(\tilde{A}-\lambda \tilde{B}\tilde{B}^*)(t-s)} \tilde{B} \big( \lambda \tilde{B}^* \tilde{x}(s,0,d) + g(\tilde{B}^* \tilde{x}(s,0,d) ) \big) \d s  } \notag \\
&\quad \le 
\lambda \norm{ \tilde{B}^* \tilde{x}(\cdot,0,d) }_{[0,t],2} + \norm{ g \circ (\tilde{B}^* \tilde{x}(\cdot,0,d)) }_{[0,t],2} \notag \\
& \quad \le 
\Big( \lambda + L_g\big(\norm{K}^{1/2} R(0,d) \big) \Big) \Big( 1/\ul{c} + \norm{K} R(\tilde{x}_0,d)^2/\ol{c} \Big)^{1/2} 
\Big( \frac{1}{4 \varsigma} \Big)^{1/2} \norm{d}_{[0,t],2}
%
\end{align} 
for all $d \in C_{c,0}^2([0,\infty),\R^k)$.
%
And finally, it follows by integration by parts (which is allowed by Lemma~\ref{lm:lin stabsatz anwendbar}~(ii)) 
and by Lemma~\ref{lm:lin stabsatz}~(i) together with~\eqref{eq:(h circ Cx)' in L^2}, \eqref{eq:B*x in L^2} 
that
\begin{align}  \label{eq:S^kappa_lin infinite-time admissible, 1.3}
&\norm{ \int_0^t \e^{(\tilde{A}-\lambda \tilde{B}\tilde{B}^*)(t-s)} \tilde{B} h(\tilde{C} \tilde{x}(s,0,d) ) \d s } 
\le 
\norm{ (\tilde{A}-\lambda \tilde{B}\tilde{B}^*)^{-1} \tilde{B} h(\tilde{C} \tilde{x}(t,0,d) )  } \notag \\
&\qquad + \norm{  (\tilde{A}-\lambda \tilde{B}\tilde{B}^*)^{-1} \int_0^t \e^{(\tilde{A}-\lambda \tilde{B}\tilde{B}^*)(t-s)} \tilde{B} \big( h \circ (\tilde{C} \tilde{x}(\cdot,0,d) )\big)'(s) \d s }  \notag \\
&\quad \le 
\norm{ (\tilde{A}-\lambda \tilde{B}\tilde{B}^*)^{-1}  } \bigg(  \|\tilde{B}\| \,  L_h(R(0,d)) \norm{\tilde{x}(t,0,d)} + \norm{ \big( h \circ (\tilde{C} \tilde{x}(\cdot,0,d) )\big)' }_{[0,t],2} \bigg) \notag \\
&\quad \le 
\norm{ (\tilde{A}-\lambda \tilde{B}\tilde{B}^*)^{-1}  } L_h(R(0,d)) \cdot \notag \\
& \qquad \quad 
\cdot \bigg( \|\tilde{B}\|\,  \ul{\gamma}( \norm{d}_{[0,t],2} ) + \Big( 1/\ul{c} + \norm{K} R(\tilde{x}_0,d)^2/\ol{c} \Big)^{1/2} 
\Big( \frac{1}{4 \varsigma} \Big)^{1/2} \norm{d}_{[0,t],2} \bigg) 
\end{align}
for all $d \in C_{c,0}^2([0,\infty),\R^k)$.
%
Combining now~\eqref{eq:voc, kappa ne 0, x_0 = 0} and~\eqref{eq:S^kappa_lin infinite-time admissible, 1.1}, \eqref{eq:S^kappa_lin infinite-time admissible, 1.2}, \eqref{eq:S^kappa_lin infinite-time admissible, 1.3} we see that
\begin{align} \label{eq:absch Phi_t,0^kappa(d)}
\big\| \ol{\Phi}_{t,0}^{\lambda}(d) \big\| \le C_{\lambda}(R(0,d)) \Big( \norm{d}_{[0,t],2} + \ul{\gamma}( \norm{d}_{[0,t],2} ) \Big)
\end{align}
for all $d \in C_{c,0}^2([0,\infty),\R^k)$, where $[0,\infty) \ni R \mapsto C_{\lambda}(R)$ is a continuous monotonically increasing function. 
It follows 
that $$\Phi_{t,0}^{\lambda}: C_{c,0}^2([0,\infty),\R^k) \to \tilde{X}$$ is a linear operator that is bounded w.r.t.~the norm of $L^2([0,\infty),\R^k)$ and therefore, by the density of $C_{c,0}^2([0,\infty),\R^k)$ in $L^2([0,\infty),\R^k)$, can be uniquely extended to a bounded linear operator $\ol{\Phi}_{t,0}^{\lambda}: L^2([0,\infty),\R^k) \to \tilde{X}$. 
It further follows that
\begin{align} \label{eq:absch ol(Phi)_t,0^kappa(d)}
\sup_{t\in [0,\infty)} \big\| \ol{\Phi}_{t,0}^{\lambda} \big\| 
&= \sup_{t\in [0,\infty)} \sup \big\{ \big\| \ol{\Phi}_{t,0}^{\lambda}(d) \big\| : d \in C_{c,0}^2([0,\infty),\R^k) \text{ with } \norm{d}_2 \le 1\big\}  \notag \\
&\le C_{\lambda}(R(0,1)) \big( 1 + \ul{\gamma}( 1 ) \big) < \infty.
\end{align}

As a second step, we observe that also the non-restricted operator $\Phi_t^{\lambda}$ can be (uniquely) extended to a bounded linear operator  $\ol{\Phi}_{t}^{\lambda}: L^2([0,\infty),\R^k) \to \tilde{X}$ and that $\ol{\Phi}_{t}^{\lambda} = \ol{\Phi}_{t,0}^{\lambda}$ for every $t \in [0,\infty)$. 
In order to do so, we have only to show that $$\ol{\Phi}_{t,0}^{\lambda}(d) = \Phi_t^{\lambda}(d)$$ for every $d \in C_c^2([0,\infty),\R^k)$. And this, in turn, can be achieved in the same way as in the second step of the proof of Lemma~\ref{lm:S_lin admissible}. (Instead of~\eqref{eq:absch Phi_t,0(d)} the essential ingredient now is the estimate~\eqref{eq:absch Phi_t,0^kappa(d)}, for it again allows us to conclude $\Phi_{t/n,0}^{ \lambda}(d_n) \longrightarrow 0$ as $n \to \infty$ for every sequence $(d_n)$ as in~\eqref{eq:d_n for extension from C_c,0^2 to C_c^2}.) 
\smallskip

In conjunction with~\eqref{eq:S^kappa_lin infinite-time admissible, 1} the second step proves the lemma. 
We finally remark 
that the mere bounded extendability of $\Phi_t^{\lambda}$ (without the finiteness condition~\eqref{eq:S^kappa_lin infinite-time admissible}) 
could alternatively also be concluded from Lemma~\ref{lm:S_lin admissible} by a perturbation argument (for which the approximate  observability of $\mathfrak{S}(\tilde{A},\tilde{B}^*,\tilde{B})$ would not be needed).
\end{proof}

\subsubsection{Conclusion of the proof} 

With the above lemmas at hand, we can now conclude the proof of weak input-to-state stability with weak asymptotic gain $\ol{\gamma} = 0$ in three simple steps. 
Since we already know that the closed-loop system is uniformly globally stable (Theorem~\ref{thm:UGS}), we have only to show that
\begin{align} \label{eq:wISS, convergence to 0}
\tilde{x}(t,\tilde{x}_0,d) \longrightarrow 0 \qquad (t \to \infty)
\end{align}
for every $\tilde{x}_0 \in \tilde{X}$ and $d \in L^2([0,\infty),\R^k)$.
So let $(\tilde{x}_0,d) \in \tilde{X} \times L^2([0,\infty),\R^k)$. It then follows from~\eqref{eq:voc, kappa ne 0} by a simple approximation argument (using~\eqref{eq:general sol, def} and Lemma~\ref{lm:S^kappa_lin infinite-time admissible}) that
\begin{align} \label{eq:voc, kappa ne 0, verallg}
\tilde{x}(t,\tilde{x}_0,d) = \e^{(\tilde{A}-\lambda \tilde{B}\tilde{B}^*)t} \tilde{x}_0 
& + \int_0^t \e^{(\tilde{A}-\lambda \tilde{B}\tilde{B}^*)(t-s)} \tilde{B} \big( \lambda \tilde{B}^* \tilde{x}(s,\tilde{x}_0,d) + g(\tilde{B}^* \tilde{x}(s,\tilde{x}_0,d) ) \big) \d s \notag \\
& +  \int_0^t \e^{(\tilde{A}-\lambda \tilde{B}\tilde{B}^*)(t-s)} \tilde{B} h(\tilde{C} \tilde{x}(s,\tilde{x}_0,d) ) \d s
+ \ol{\Phi}_t^{\lambda}(d)  
\end{align} 
for all $t \in [0,\infty)$ and $\lambda \in (0,\infty)$. 
We now show in three steps
that all terms on the right-hand side of~\eqref{eq:voc, kappa ne 0, verallg} converge to $0$ as $t \to \infty$. 
\smallskip

As a first step, we observe from Lemma~\ref{lm:lin stabsatz}~(ii) and from Lemma~\ref{lm:lin stabsatz}~(iii) in conjunction with Lemma~\ref{lm:lin stabsatz anwendbar}~(i) that
\begin{equation} \label{eq:stabsatz, 1}
\begin{gathered}
\e^{(\tilde{A}-\lambda \tilde{B}\tilde{B}^*)t} \tilde{x}_0 \longrightarrow 0 \qquad (t \to \infty), \\
\int_0^t \e^{(\tilde{A}-\lambda \tilde{B}\tilde{B}^*)(t-s)} \tilde{B} \big( \lambda \tilde{B}^* \tilde{x}(s,\tilde{x}_0,d) + g(\tilde{B}^* \tilde{x}(s,\tilde{x}_0,d) ) \big) \d s
\longrightarrow 0 \qquad (t \to \infty)
\end{gathered}
\end{equation} 
for every $\lambda \in (0,\infty)$. 
\smallskip

As a second step, we observe from Lemma~\ref{lm:S^kappa_lin infinite-time admissible} that
\begin{align} \label{eq:stabsatz, 2}
\ol{\Phi}_t^{\lambda}(d)  \longrightarrow 0 \qquad (t \to \infty)
\end{align}
for every $\lambda \in (0,\infty)$. Indeed, for $d_0 \in C_c^2([0,\infty),\R^k)$ the convergence $\Phi_t^{\lambda}(d_0)  \longrightarrow 0$ as $t \to \infty$ immediately follows from~\eqref{eq:Phi_t^kappa, def} by virtue of Lemma~\ref{lm:lin stabsatz}~(ii). And from this, in turn, the desired convergence~\eqref{eq:stabsatz, 2} follows by density and Lemma~\ref{lm:S^kappa_lin infinite-time admissible}. 
\smallskip

As a third and last step, we show that also
\begin{align} \label{eq:stabsatz, 3}
\int_0^t \e^{(\tilde{A}-\lambda \tilde{B}\tilde{B}^*)(t-s)} \tilde{B} h(\tilde{C} \tilde{x}(s,\tilde{x}_0,d) ) \d s
\longrightarrow 0 \qquad (t \to \infty)
\end{align}
for every $\lambda \in (0,\infty)$. Choose an arbitrary sequence $(t_n)$ with $t_n \longrightarrow \infty$ as $n \to \infty$. Then by Lemma~\ref{lm:lin stabsatz}~(iv) in conjunction with Lemma~\ref{lm:lin stabsatz anwendbar}~(ii) there exists a subsequence $(t_{n_l})$ such that for both $\lambda = 1$ and $\lambda = 2$ one has:
\begin{align} \label{eq:stabsatz, 3.1}
&\lim_{l\to \infty} \int_0^{t_{n_l}} \e^{(\tilde{A}-\lambda \tilde{B}\tilde{B}^*)(t_{n_l}-s)} \tilde{B} h(\tilde{C} \tilde{x}(s,\tilde{x}_0,d) ) \d s \notag \\
&\qquad \qquad \qquad \qquad = - \lim_{l\to\infty} (\tilde{A}-\lambda \tilde{B}\tilde{B}^*)^{-1} \tilde{B} h(\tilde{C}\tilde{x}(t_{n_l},\tilde{x}_0,d)).
\end{align}
Combining now~\eqref{eq:voc, kappa ne 0, verallg} with~\eqref{eq:stabsatz, 1}, \eqref{eq:stabsatz, 2}, \eqref{eq:stabsatz, 3.1} we see that $\tilde{x}_* = \lim_{l\to \infty} \tilde{x}(t_{n_l},\tilde{x}_0,d)$
exists in $\tilde{X}$ and that
\begin{align}
\tilde{x}_* =  -(\tilde{A}-\lambda \tilde{B}\tilde{B}^*)^{-1} \tilde{B} h(\tilde{C}\tilde{x}_*)
\end{align}
for both $\lambda=1$ and $\lambda=2$. So, $\tilde{x}_* \in D(\tilde{A})$ and
\begin{align}
(\tilde{A}-\tilde{B}\tilde{B}^*) \tilde{x}_* =  -\tilde{B} h(\tilde{C}\tilde{x}_*) = (\tilde{A}-2\tilde{B}\tilde{B}^*)\tilde{x}_*.
\end{align}
It follows from this that on the one hand
\begin{align} \label{eq:stabsatz, 3.2}
\tilde{B}\tilde{B}^* \tilde{x}_* = 0 \qquad \text{hence} \qquad \tilde{B}^* \tilde{x}_* = 0
\end{align}
and on the other hand
\begin{align} \label{eq:stabsatz, 3.3}
\tilde{A} \tilde{x}_* + \tilde{f}(\tilde{x}_*) = \tilde{A} \tilde{x}_* + \tilde{B} g(\tilde{B}^* \tilde{x}_*) + \tilde{B} h(\tilde{C} \tilde{x}_*)
=  \tilde{A} \tilde{x}_* + \tilde{B} h(\tilde{C} \tilde{x}_*)
= 0.
\end{align}
In other words, \eqref{eq:stabsatz, 3.2} and \eqref{eq:stabsatz, 3.3} say that $\tilde{x}_*$ is an equilibrium point of~\eqref{eq:closed-loop mit zero-input} 
and thus $\tilde{x}_* = 0$ by assumption. 
So, summarizing we have shown that for every sequence $(t_n)$ with $t_n \longrightarrow \infty$ there exists a subsequence $(t_{n_l})$ such that
\begin{align}
\lim_{l\to \infty} \tilde{x}(t_{n_l},\tilde{x}_0,d) = \tilde{x}_* = 0. 
\end{align}
And from this, in turn, the asserted convergence~\eqref{eq:wISS, convergence to 0} 
follows (and therefore, in view of the first two steps, also~\eqref{eq:stabsatz, 3}).

\subsection{Some remarks on the assumptions} 

In this section, we discuss specializations and generalizations of the assumptions from the weak input-to-state stability result (Theorem~\ref{thm:wISS}). In particular, we clarify their relation to the assumptions from the uniform input-to-state stability result (Theorem~\ref{thm:ISS}). As the following two lemmas show, the assumption on $\mathfrak{S}$ from Theorem~\ref{thm:ISS} (Condition~\ref{cond:ISS, system}) is more or less -- apart from some minor extra conditions which are often satisfied in applications -- a sufficient condition both for the approximate observability assumption (Condition~\ref{cond:wISS, system}) and for the equilibrium point assumption from Theorem~\ref{thm:wISS}. See the conference paper~\cite{ScZw18-MTNS} for 
versions of the stability theorems above that are simplified accordingly.  
In our applied examples (Section~\ref{sect:applications}), we will make ample use of this. 

\begin{lm} \label{lm:hinr bed fuer klass appr beobb-vor an S aus wISS-satz}
Suppose  that the assumptions (Conditions~\ref{cond:S imp-passive}, \ref{cond:P und R}, \ref{cond:B-tilde surj}) of the solvability  theorems are satisfied and, in addition, that $\mathfrak{S}$ is even impedance-energy-preserving meaning that~\eqref{eq:S imp-passive} holds with equality. If then Condition~\ref{cond:ISS, system} is satisfied, then so is Condition~\ref{cond:wISS, system}.
\end{lm}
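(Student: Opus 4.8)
The plan is to argue by contradiction, combining the sideways energy estimate already established inside the proof of Lemma~\ref{lm:ISS, E-tilde} with the energy conservation that the extra impedance-energy-preserving hypothesis supplies. So suppose Condition~\ref{cond:ISS, system} holds but Condition~\ref{cond:wISS, system} fails, i.e.\ there is a nonzero $x_0 \in D(A) = D(\mathcal{A}) \cap \ker \mathcal{B}$ whose classical solution $x := x(\cdot,x_0,0)$ of $x' = \mathcal{A}x$ has output $y(t) = \mathcal{C}x(t) \equiv 0$ on $[0,\infty)$. First I would record that the input vanishes too, namely $\mathcal{B}x(t) = 0$ for all $t$, because $x(t) = \e^{At}x_0$ stays in $D(A) \subseteq \ker \mathcal{B}$. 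Feeding $\mathcal{B}x(t) = \mathcal{C}x(t) = 0$ into the approximate-observability inequality~\eqref{eq:ISS, matrungl} of Condition~\ref{cond:ISS, system} forces $(\mathcal{H}x(t))(\eta) = 0$ for the relevant endpoint $\eta \in \{a,b\}$ and all $t \ge 0$.

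Next I would invoke the sideways energy estimate~\eqref{eq:ISS-E-tilde, step 1}, whose derivation (the first step of Lemma~\ref{lm:ISS, E-tilde}) uses only that $x$ solves $x' = \mathcal{A}x$ classically with $\mathcal{H}$ absolutely continuous, which is exactly the present situation. Taking $\eta = b$ (the case $\eta = a$ being symmetric, with $F_\tau^-$ and the endpoint $a$), the vanishing of $(\mathcal{H}x(s))(b)$ makes the integrand $x(s,b)^{\top}\mathcal{H}(b)x(s,b)$ vanish, so $F_\tau^+(b) = 0$ and hence $F_\tau^+(\zeta) \le F_\tau^+(b)\,\e^{\kappa_0(b-a)} = 0$ for every $\zeta \in (a,b)$ and every $\tau > 2\gamma_0(b-a)$. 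Since the integrand defining $F_\tau^+$ is nonnegative by $\mathcal{H} \ge \ul{m} > 0$, letting $\tau \to \infty$ yields $x(s,\zeta) = 0$ for a.e.\ $(s,\zeta)$ with $s > \gamma_0(b-\zeta)$; in particular, as $\gamma_0(b-\zeta) < \gamma_0(b-a)$ for all $\zeta \in (a,b)$, the continuous map $s \mapsto x(s) \in X$ satisfies $x(s) = 0$ for every $s > \gamma_0(b-a)$.

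Finally I would close the loop with energy conservation. Because $\mathfrak{S}$ is impedance-energy-preserving, equality holds in~\eqref{eq:S imp-passive}, so along $x$ with $\mathcal{B}x \equiv 0$ one has $\frac{\d}{\d t}E(x(t)) = \scprd{x(t),\mathcal{A}x(t)}_X = (\mathcal{B}x(t))^{\top}\mathcal{C}x(t) = 0$, whence $E(x(t)) \equiv E(x_0)$. Evaluating at any $s > \gamma_0(b-a)$, where $x(s) = 0$, gives $E(x_0) = 0$ and therefore $x_0 = 0$, contradicting $x_0 \ne 0$. This proves Condition~\ref{cond:wISS, system}.

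The step I expect to be the crux is the passage from ``$x$ vanishes only for large times'' to ``$x_0 = 0$''. The sideways estimate propagates the boundary vanishing of $\mathcal{H}x$ only into the time-shifted interior region and cannot reach $t = 0$, so it alone never kills the initial datum; one genuinely needs a backward-uniqueness mechanism, and energy conservation provides precisely that. This is why the impedance-energy-preserving assumption (rather than mere impedance-passivity) is imposed in the hypothesis, and I would flag explicitly that without it the argument breaks down at this last step.
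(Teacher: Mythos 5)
Your proposal is correct and takes essentially the same route as the paper: zero input and output force the boundary trace $(\mathcal{H}x(\cdot))(\eta)$ to vanish, a sideways-energy (observability) estimate then yields $x(t)=0$ for all sufficiently large $t$, and the impedance-energy-preservation together with zero input/output makes the energy constant, propagating the vanishing back to $x_0=0$. The only cosmetic difference is that you re-derive the needed estimate from the first step of Lemma~\ref{lm:ISS, E-tilde}, whereas the paper directly cites Lemma~9.1.2 of~\cite{JacobZwart} (respectively Theorem~3.5 of~\cite{Sc18-BV}) for the inequality $\norm{x(t)}_X^2 \le C_0 \int_0^t |(\mathcal{H}x(s))(\eta)|^2 \d s$.
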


\begin{proof}
Suppose that Condition~\ref{cond:ISS, system} is satisfied and that $x_0 \in D(A)$ is such that
\begin{align} \label{eq:hinr bed klass appr beobb, 1}
y(t,x_0,0) = \mathcal{C}x(t,x_0,0) = 0 \qquad (t \in [0,\infty)).
\end{align}
Since $x(\cdot,x_0,0) = \e^{A\cdot} x_0$ is a classical solution of $x'= \mathcal{A}x$ with $t \mapsto \norm{x(t,x_0,0)}_X$ being monotonically decreasing and since Condition~\ref{cond:ISS, system} is satisfied, there exist positive constants $C_0,t_0$ such that
\begin{align} \label{eq:hinr bed klass appr beobb, 2}
\norm{x(t,x_0,0)}_X^2 
&\le C_0 \int_0^t |(\mathcal{H}x(s,x_0,0))(\eta)|^2 \d s \notag \\
&\le 
C_0/\kappa \int_0^t |(\mathcal{B}x(s,x_0,0))(\eta)|^2 + |(\mathcal{C}x(s,x_0,0))(\eta)|^2 \d s
\end{align}
for all $t \in [t_0,\infty)$ (Lemma~9.1.2 of~\cite{JacobZwart} or, more precisely, the third step in the proof of Theorem~3.5 of~\cite{Sc18-BV}). 
So, as $\mathcal{B}x(\cdot,x_0,0) \equiv 0$ and $\mathcal{C}x(\cdot,x_0,0) \equiv 0$ by~\eqref{eq:hinr bed klass appr beobb, 1}, we conclude that
\begin{align} \label{eq:hinr bed klass appr beobb, 3}
x(t,x_0,0) = 0 
\end{align}
at least for all $t \in [t_0,\infty)$. Since by the assumed impedance-energy-preservation of $\mathfrak{S}$ and by~\eqref{eq:hinr bed klass appr beobb, 1} the energy $t \mapsto 1/2 \norm{x(t,x_0,0)}_X^2$ is constant on the whole of $[0,\infty)$, it follows from~\eqref{eq:hinr bed klass appr beobb, 3} that $x_0 = 0$ as desired. 
\end{proof}

\begin{lm} \label{lm:hinr bed fuer aequilibrpkt-vor aus wISS-satz}
Suppose  that the assumptions (Conditions~\ref{cond:S imp-passive}, \ref{cond:P und R}, \ref{cond:B-tilde surj}) of the solvability  theorems are satisfied and, in addition, that $0$ is the only critical point of the potential energy $\mathcal{P}$ of $\mathfrak{S}_c$. If then Condition~\ref{cond:ISS, system} is satisfied, then the only equilibrium point $\tilde{x}_*$ of~\eqref{eq:closed-loop mit zero-input} is $0$. 
\end{lm}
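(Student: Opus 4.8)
The plan is to read off the equations that an equilibrium point $\tilde{x}_* = (x_*, v_{1*}, v_{2*}) \in D(\tilde{\mathcal{A}}) = D(\mathcal{A}) \times \R^{2m_c}$ of~\eqref{eq:closed-loop mit zero-input} must satisfy and then to peel off its components one by one. Writing out $\tilde{\mathcal{A}}\tilde{x}_* + \tilde{f}(\tilde{x}_*) = 0$ componentwise (and recalling $\mathcal{R}(0)=0$) gives the three relations
\begin{gather*}
\mathcal{A}x_* = 0, \qquad K v_{2*} = 0, \qquad B_c \mathcal{C}x_* = \nabla \mathcal{P}(v_{1*}),
\end{gather*}
while the boundary condition $\tilde{\mathcal{B}}\tilde{x}_* = 0$ together with $v_{2*}=0$ yields $\mathcal{B}x_* = -S_c \mathcal{C}x_*$. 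Since $K > 0$ by~\eqref{eq:S_c pos def}, the second relation immediately gives $v_{2*} = 0$, so it remains only to show $x_* = 0$ and $v_{1*} = 0$.

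Next I would exploit the impedance-passivity of $\mathfrak{S}$ to pin down the boundary data of $x_*$. Since $\mathcal{A}x_* = 0$, Condition~\ref{cond:S imp-passive} gives $0 = \scprd{x_*, \mathcal{A}x_*}_X \le (\mathcal{B}x_*)^{\top} \mathcal{C}x_*$, and inserting $\mathcal{B}x_* = -S_c \mathcal{C}x_*$ turns this into $0 \le -(\mathcal{C}x_*)^{\top} S_c \mathcal{C}x_* \le -\varsigma\,|\mathcal{C}x_*|^2$ by~\eqref{eq:S_c pos def}. Hence $\mathcal{C}x_* = 0$, and therefore also $\mathcal{B}x_* = -S_c \mathcal{C}x_* = 0$. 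Feeding these two vanishing boundary quantities into the observability-type inequality~\eqref{eq:ISS, matrungl} of Condition~\ref{cond:ISS, system} forces $(\mathcal{H}x_*)(\eta) = 0$ at the distinguished endpoint $\eta \in \{a,b\}$.

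The final step is to propagate this single boundary value across the spatial domain. Because $\mathfrak{S}$ is of order $N=1$ (Condition~\ref{cond:ISS, system}) and $P_1 = P_N$ is invertible, the equation $\mathcal{A}x_* = 0$ reads $\partial_\zeta(\mathcal{H}x_*) = -P_1^{-1}P_0\,(\mathcal{H}x_*)$, which is a linear first-order ODE for $w := \mathcal{H}x_* \in W^{1,2}((a,b),\R^m)$ with vanishing value $w(\eta)=0$ at one endpoint. By uniqueness for linear ODEs, $w \equiv 0$ on $(a,b)$, and since $\mathcal{H}(\zeta) \ge \ul{m} > 0$ is invertible a.e.\ by~\eqref{eq:H bd below and above}, this gives $x_* = 0$. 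Then the third equilibrium relation collapses to $\nabla\mathcal{P}(v_{1*}) = B_c \mathcal{C}x_* = 0$, so $v_{1*}$ is a critical point of $\mathcal{P}$ and hence $v_{1*} = 0$ by the hypothesis that $0$ is the only critical point of $\mathcal{P}$; altogether $\tilde{x}_* = 0$. The only genuinely delicate point is the ODE-propagation step, everything else being purely algebraic; here one should note that~\eqref{eq:ISS, matrungl} controls $(\mathcal{H}x_*)(\eta)$ only at a single one of the two endpoints, which is exactly the endpoint at which the ODE is to be initialized, so the uniqueness argument does go through.
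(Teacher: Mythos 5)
Your proof is correct, and its first, algebraic half coincides exactly with the paper's: writing out $\tilde{\mathcal{A}}\tilde{x}_*+\tilde{f}(\tilde{x}_*)=0$ and $\tilde{\mathcal{B}}\tilde{x}_*=0$, using $K>0$ to get $v_{2*}=0$, then combining impedance-passivity with $\mathcal{B}x_*=-S_c\mathcal{C}x_*$ and $S_c\ge\varsigma>0$ to conclude $\mathcal{C}x_*=\mathcal{B}x_*=0$, and finally $\nabla\mathcal{P}(v_{1*})=B_c\mathcal{C}x_*=0$ together with the critical-point hypothesis to get $v_{1*}=0$. Where you genuinely diverge is the last step, $x_*=0$. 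The paper observes that $t\mapsto x_*$ is a (constant) classical solution of $x'=\mathcal{A}x$ and invokes the integrated sideways/observability estimate $\norm{x_*}_X^2\le C_0\int_0^t|(\mathcal{H}x_*)(\eta)|^2\d s\le (C_0/\kappa)\int_0^t|\mathcal{B}x_*|^2+|\mathcal{C}x_*|^2\d s$ (Lemma~9.1.2 of~\cite{JacobZwart}, resp.\ Theorem~3.5 of~\cite{Sc18-BV}), which is the same external estimate it uses in the companion Lemma~\ref{lm:hinr bed fuer klass appr beobb-vor an S aus wISS-satz}. You instead exploit that for an equilibrium the equation $\mathcal{A}x_*=0$ with $N=1$ is a linear constant-coefficient spatial ODE $\partial_\zeta(\mathcal{H}x_*)=-P_1^{-1}P_0(\mathcal{H}x_*)$ for $w:=\mathcal{H}x_*\in W^{1,2}((a,b),\R^m)\subset C([a,b],\R^m)$; since~\eqref{eq:ISS, matrungl} with $\mathcal{B}x_*=\mathcal{C}x_*=0$ forces $w(\eta)=0$, ODE uniqueness (Gr\"onwall, valid in either direction from either endpoint, so the identity of $\eta$ is immaterial) gives $w\equiv 0$ and hence $x_*=0$ by~\eqref{eq:H bd below and above}. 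Your route is more elementary and self-contained -- it replaces the cited dynamic estimate by a purely static argument available precisely because the state is stationary -- while the paper's route has the advantage of running in parallel with its other lemma and of extending verbatim to any situation where the sideways-type estimate holds, without re-examining the structure of $\mathcal{A}$.
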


\begin{proof}
Suppose that Condition~\ref{cond:ISS, system} is satisfied and that $\tilde{x}_* = (x_*,v_{*1},v_{*2})$ is an equilibrium point of~\eqref{eq:closed-loop mit zero-input}. 
Then $\tilde{x}_* \in D(\tilde{\mathcal{A}})$ with
\begin{align} \label{eq:hinr bed aequilibrpkt-vor, 1}
0 = \tilde{\mathcal{A}}\tilde{x}_* + \tilde{f}(\tilde{x}_*)
= 
\begin{pmatrix}
\mathcal{A}x_* \\ Kv_{*2} \\ B_c \mathcal{C}x_* - \nabla \mathcal{P}(v_{*1}) - \mathcal{R}(Kv_{*2})
\end{pmatrix}
= 
\begin{pmatrix}
\mathcal{A}x_* \\ Kv_{*2} \\ B_c \mathcal{C}x_* - \nabla \mathcal{P}(v_{*1})
\end{pmatrix}
\end{align} 
and
\begin{align}  \label{eq:hinr bed aequilibrpkt-vor, 2}
0 = \tilde{\mathcal{B}} \tilde{x}_* = \mathcal{B}x_* + B_c^{\top}Kv_{*2} + S_c \mathcal{C}x_*
= \mathcal{B}x_* + S_c \mathcal{C}x_*.
\end{align}
It follows from~\eqref{eq:hinr bed aequilibrpkt-vor, 1} 
and~\eqref{eq:hinr bed aequilibrpkt-vor, 2} using the impedance-passivity of $\mathfrak{S}$ (Condition~\ref{cond:S imp-passive}) that
\begin{align}  \label{eq:hinr bed aequilibrpkt-vor, 3}
0 = \scprd{x_*, \mathcal{A}x_*}_X \le (\mathcal{B}x_*)^{\top} \mathcal{C}x_* = - (\mathcal{C}x_*)^{\top} S_c \mathcal{C}x_*
\le 0.
\end{align}
So, by~\eqref{eq:S_c pos def} and~\eqref{eq:hinr bed aequilibrpkt-vor, 2} we see that
\begin{align} \label{eq:hinr bed aequilibrpkt-vor, 4}
\mathcal{C}x_* = 0 \qquad \text{and} \qquad \mathcal{B}x_* = 0.
\end{align}
In view of~\eqref{eq:hinr bed aequilibrpkt-vor, 1} 
and our extra assumption on the critical points of $\mathcal{P}$, this implies that $v_{*1} = 0$ and it remains to show that $x_* = 0$ as well. 
Since by~\eqref{eq:hinr bed aequilibrpkt-vor, 1} 
the constant function $t \mapsto x(t) := x_*$ is a classical solution of $x'=\mathcal{A}x$ and since Condition~\ref{cond:ISS, system} is satisfied, there exist positive constants $C_0, t_0$ such that
\begin{align}
\norm{x_*}_X^2 = \norm{x(t)}_X^2 \le C_0 \int_0^t |(\mathcal{H}x(s))(\eta)|^2 \d s
\le 
C_0/\kappa \int_0^t |\mathcal{B}x_*|^2 + |\mathcal{C}x_*|^2 \d s
\end{align}
for all $t \in [t_0,\infty)$ (Lemma~9.1.2 of~\cite{JacobZwart} or, more precisely, the third step in the proof of Theorem~3.5 of~\cite{Sc18-BV}). 
In view of~\eqref{eq:hinr bed aequilibrpkt-vor, 4}, this implies that $x_* = 0$ as desired. 
\end{proof}

An inspection of the proof of Theorem~\ref{thm:wISS} reveals that Condition~\ref{cond:wISS, system} and Condition~\ref{cond:wISS, controller}~(ii) are needed only to obtain the approximate observability of $\mathfrak{S}(\tilde{A}, \tilde{B}, \tilde{B}^*)$ (Lemma~\ref{lm:S(A,B,B*) appr beobb}) -- for all the other  
lemmas and arguments, the following Condition~\ref{cond:S(A,B,B*) appr beobb/steuerb} and Condition~\ref{cond:wISS, controller}~(i) (along with Conditions~\ref{cond:S imp-passive}, \ref{cond:P und R}, \ref{cond:B-tilde surj}, of course) are sufficient.

\begin{cond} \label{cond:S(A,B,B*) appr beobb/steuerb}
$\mathfrak{S}(\tilde{A}, \tilde{B}, \tilde{B}^*)$ is approximately observable in infinite time or approximately controllable in infinite time.
\end{cond} 

Consequently, the assertions of Theorem~\ref{thm:wISS} remain true if we replace Condition~\ref{cond:wISS, system} by the more general  
Condition~\ref{cond:S(A,B,B*) appr beobb/steuerb} and omit Condition~\ref{cond:wISS, controller}~(ii) (while leaving all other assumptions of the theorem unchanged). Yet, the thus modified assumptions are 
not much more general. In fact, by the following lemma, the modified assumptions entail all the original assumptions except, possibly, the not very restrictive injectivity assumption on $B_c$ (Condition~\ref{cond:wISS, controller}~(ii)).  

\begin{lm} \label{lm:alte S(A,B,B*)-vor nicht viel allgemeiner als vor direkt an S}
Suppose  that the assumptions (Conditions~\ref{cond:S imp-passive}, \ref{cond:P und R}, \ref{cond:B-tilde surj}) of the solvability  theorems are satisfied 
and that $\mathfrak{S}(\tilde{A}, \tilde{B}, \tilde{B}^*)$ is approximately observable in infinite time. 
Then Condition~\ref{cond:wISS, system} is satisfied as well.
\end{lm}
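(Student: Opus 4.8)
The plan is to argue by contraposition: assuming that $\mathfrak{S}$ fails to be classically approximately observable in infinite time (that is, Condition~\ref{cond:wISS, system} is violated), I will exhibit a nonzero initial state that is unobservable for the collocated system $\mathfrak{S}(\tilde{A},\tilde{B},\tilde{B}^*)$, contradicting the assumed approximate observability of the latter. So suppose there is a nonzero $x_0 \in D(A) = D(\mathcal{A}) \cap \ker \mathcal{B}$ whose classical zero-input output vanishes identically, i.e.\ $\mathcal{C}\e^{At}x_0 = 0$ for all $t \in [0,\infty)$. The natural candidate for an unobservable state of $\mathfrak{S}(\tilde{A},\tilde{B},\tilde{B}^*)$ is then $\tilde{x}_0 := (x_0,0,0) \in \tilde{X}$, which is plainly nonzero.

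First I would verify that $\tilde{x}_0 \in D(\tilde{A}) = D(\tilde{\mathcal{A}}) \cap \ker \tilde{\mathcal{B}}$. Since $x_0 \in D(\mathcal{A})$ we have $\tilde{x}_0 \in D(\tilde{\mathcal{A}}) = D(\mathcal{A}) \times \R^{2m_c}$, and since $\mathcal{B}x_0 = 0$ (as $x_0 \in \ker \mathcal{B}$) together with $\mathcal{C}x_0 = 0$ (evaluate the vanishing output at $t=0$) we obtain $\tilde{\mathcal{B}}\tilde{x}_0 = \mathcal{B}x_0 + B_c^{\top}K\cdot 0 + S_c\mathcal{C}x_0 = 0$, so that $\tilde{x}_0 \in \ker \tilde{\mathcal{B}}$ as well.

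The key step is to identify the semigroup orbit $\e^{\tilde{A}t}\tilde{x}_0$ explicitly; I claim $\e^{\tilde{A}t}\tilde{x}_0 = (\e^{At}x_0,\,0,\,0)$ for all $t \in [0,\infty)$. To see this I would feed the curve $t \mapsto (\e^{At}x_0,0,0)$ into the abstract Cauchy problem governed by $\tilde{A}$: component-wise, the first line reproduces $x' = \mathcal{A}x$ with $x(0)=x_0$, the second reads $v_1' = Kv_2 = 0$, and the third reads $v_2' = -v_1 + B_c\mathcal{C}x = B_c\,\mathcal{C}\e^{At}x_0 = 0$, where the crucial cancellation in the last term is exactly the hypothesis that the $\mathfrak{S}$-output vanishes identically. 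Moreover this curve stays in $D(\tilde{A})$, because $\e^{At}x_0 \in D(A) \subset \ker \mathcal{B}$ and the output remains zero, so $\tilde{\mathcal{B}}(\e^{At}x_0,0,0) = \mathcal{B}\e^{At}x_0 + S_c\mathcal{C}\e^{At}x_0 = 0$ for all $t$. By uniqueness of solutions to the abstract Cauchy problem for the generator $\tilde{A}$, the claimed identity follows.

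Finally I would evaluate the observation: $\tilde{B}^*\e^{\tilde{A}t}\tilde{x}_0 = K\cdot 0 = 0$ for all $t \in [0,\infty)$. Since $\mathfrak{S}(\tilde{A},\tilde{B},\tilde{B}^*)$ is approximately observable in infinite time by hypothesis, this forces $\tilde{x}_0 = 0$ and hence $x_0 = 0$, contradicting the choice $x_0 \neq 0$. This contradiction establishes Condition~\ref{cond:wISS, system}. The only mildly delicate point in the whole argument is the domain-invariance plus uniqueness step that pins down $\e^{\tilde{A}t}\tilde{x}_0$; once the coupling term $B_c\mathcal{C}x$ is seen to vanish along the orbit, everything reduces to a direct computation.
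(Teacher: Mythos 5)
Your proof is correct and follows essentially the same route as the paper: both arguments take the candidate state $\tilde{x}_0=(x_0,0,0)$, identify the orbit $\e^{\tilde{A}t}\tilde{x}_0=(\e^{At}x_0,0,0)$ by checking domain-invariance and the abstract Cauchy problem (with the coupling term $B_c\mathcal{C}\e^{At}x_0$ vanishing by the zero-output hypothesis), and then invoke the approximate observability of $\mathfrak{S}(\tilde{A},\tilde{B},\tilde{B}^*)$ to conclude $\tilde{x}_0=0$. The only difference is your contrapositive framing, which is purely cosmetic since the argument in fact directly shows $x_0=0$ for every unobservable $x_0\in D(A)$.
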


\begin{proof}
Suppose $x_0 \in D(A)$ is such that the output $y(t,x_0,0) = \mathcal{C}x(t,x_0,0) = 0$ for all $t \in [0,\infty)$. We have to show that $x_0 =0$.
Set $\tilde{x}_0 := (x_0,0,0) \in \tilde{X}$ and $\tilde{x}(t) := (x(t,x_0,0),0,0) = (\e^{At}x_0,0,0) \in \tilde{X}$, then $\tilde{x}$ is continuously differentiable and, by the zero-output assumption,
\begin{align*}
\tilde{x}(t) \in D(\tilde{\mathcal{A}}) \cap \ker \tilde{\mathcal{B}} = D(\tilde{A}) 
\qquad \text{and} \qquad
\tilde{x}'(t) = \tilde{A}\tilde{x}(t)
\end{align*} 
for all $t \in [0,\infty)$. Consequently, $\tilde{x}(t) = \e^{\tilde{A}t}\tilde{x}_0$ and thus
\begin{align*}
\tilde{B}^* \e^{\tilde{A}t}\tilde{x}_0 = \tilde{B}^* \tilde{x}(t) = 0 
\qquad (t \in [0,\infty)).
\end{align*} 
Since $\mathfrak{S}(\tilde{A}, \tilde{B}, \tilde{B}^*)$ is approximately observable in infinite time, we obtain $\tilde{x}_0 =0$ as desired.
\end{proof}

\section{Some applications} \label{sect:applications}


In this section, we apply the uniform and the weak input-to-state stability results to a vibrating string and a Timoshenko beam.

\begin{ex}
Consider a vibrating string~\cite{Villegas}, \cite{JacobZwart}, \cite{Augner}, that is, the transverse displacement $w(t,\zeta)$ of the string at 
position $\zeta \in (a,b)$ evolves according to the partial differential equation
\begin{align} \label{eq:string pde}
\rho(\zeta) \partial_t^2 w(t,\zeta) =  \partial_{\zeta} \big( T(\zeta) \partial_{\zeta}w(t,\zeta) \big)
 \qquad (t \in [0,\infty), \zeta \in (a,b))
\end{align}
(vibrating string equation) and the energy $E_w(t)$ of the string at time $t$ is given by 
\begin{align} \label{eq:string energy}
E_w(t) = \frac{1}{2} \int_a^b \rho(\zeta) \big( \partial_t w(t,\zeta) \big)^2 + T(\zeta) \big( \partial_{\zeta} w(t,\zeta) \big)^2 \d \zeta.
\end{align}
In these equations, $\rho$, $T$ are the mass density and the Young modulus of the string and they are assumed to be absolutely continuous and to be bounded below and above by positive finite constants. Also, assume that the string is clamped at its left end, that is, 
\begin{align} \label{eq:string bdry cond}
\partial_t w(t,a) = 0 \qquad (t \in [0,\infty))
\end{align}  
and that the control input $u(t)$ and observation output $y(t)$ are given respectively by the force and by the velocity at the right end of the string, that is,
\begin{align} \label{eq:string input/output}
u(t) = T(b) \partial_{\zeta} w(t,b)
\qquad \text{and} \qquad
y(t) = \partial_t w(t,b)
\end{align}
for all $t \in [0,\infty)$. With the choices
\begin{align*}
x(t)(\zeta) 
:=
\begin{pmatrix}
\rho(\zeta) \partial_t w(t,\zeta) \\ \partial_{\zeta} w(t,\zeta)
\end{pmatrix},
\qquad 
\mathcal{H}(\zeta) 
:=
\begin{pmatrix}
1/\rho(\zeta) & 0 \\ 0 & T(\zeta)
\end{pmatrix},
\qquad
P_1 := \begin{pmatrix} 0 & 1 \\ 1 & 0 \end{pmatrix}
\end{align*}
and $P_0 := 0 \in \R^{2\times 2}$, the pde~\eqref{eq:string pde} takes the form~\eqref{eq:S diff eq} of a port-Hamiltonian system of order $N =1$ and, moreover, the boundary condition~\eqref{eq:string bdry cond} and the in- and output conditions~\eqref{eq:string input/output} take the desired form~\eqref{eq:S bdry cond} and~\eqref{eq:S input/output eq}, \eqref{eq:S bdry control/observ operators} with matrices $W_{B,1}, W_{B,2}, W_C \in \R^{1\times 4}$. 
It is straightforward to verify that this system $\mathfrak{S}$ is impedance-energy-preserving, that the matrix $W \in \R^{3\times 4}$ from~\eqref{eq:W-matrix, def} has full rank, and that $\mathcal{H}$ is absolutely continuous and~\eqref{eq:ISS, matrungl} holds true. 
In particular, Conditions~\ref{cond:S imp-passive}, \ref{cond:B-tilde surj}, \ref{cond:ISS, system} are satisfied. 
So, as soon as the controller $\mathfrak{S}_c$ is chosen such that
\begin{itemize}
\item Conditions~\ref{cond:P und R} and~\ref{cond:ISS, controller} are satisfied, or
\item Conditions~\ref{cond:P und R} and~\ref{cond:wISS, controller} are satisfied and $0$ is the only critical point of $\mathcal{P}$,
\end{itemize}
respectively, the resulting closed-loop system $\tilde{\mathfrak{S}}$ is input-to-state stable or weakly input-to-state stable, respectively (Lemma~\ref{lm:hinr bed fuer klass appr beobb-vor an S aus wISS-satz} and~\ref{lm:hinr bed fuer aequilibrpkt-vor aus wISS-satz}!).~$\blacktriangleleft$
\end{ex}

\begin{ex}
Consider a beam modelled according to Timoshenko~\cite{Villegas}, \cite{JacobZwart}, \cite{Augner}, that is, the transverse displacement $w(t,\zeta)$ and the rotation angle $\phi(t,\zeta)$ of the beam at 
position $\zeta \in (a,b)$ evolve according to the partial differential equations
\begin{gather} \label{eq:Timoshenko pde}
\rho(\zeta) \partial_t^2 w(t,\zeta) = \partial_{\zeta} \Big( K(\zeta) \big( \partial_{\zeta}w(t,\zeta) - \phi(t,\zeta) \big) \Big) \\
I_r(\zeta) \partial_t^2 \phi(t,\zeta) = \partial_{\zeta} \big( E I(\zeta) \partial_{\zeta} \phi(t,\zeta) \big) + K(\zeta) \big( \partial_{\zeta}w(t,\zeta) - \phi(t,\zeta) \big)
\end{gather}
for $t \in [0,\infty), \zeta \in (a,b)$ (Timoshenko beam equations) and the energy $E_{w,\phi}(t)$ of the beam at time $t$ is given by 
\begin{align} \label{eq:Timoshenko energy}
E_{w,\phi}(t) &= \frac{1}{2} \int_a^b \rho(\zeta) \big( \partial_t w(t,\zeta) \big)^2 + K(\zeta)  \big( \partial_{\zeta}w(t,\zeta) - \phi(t,\zeta) \big)^2 \notag \\
&\qquad \qquad + I_r(\zeta) \big( \partial_t \phi(t,\zeta) \big)^2 + E I(\zeta) \big( \partial_{\zeta} \phi(t,\zeta)\big)^2  \d \zeta.
\end{align}
In these equations, $\rho$, $E$, $I$, $I_r$, $K$ are respectively the mass density, the Young modulus, the moment of inertia, the rotatory moment of inertia, and the shear modulus of the beam and they are assumed to be absolutely continuous and to be bounded below and above by positive finite constants. Also, assume that the beam is clamped at its left end, that is, 
\begin{align} \label{eq:Timoshenko bdry cond}
\partial_t w(t,a) = 0 \qquad \text{and} \qquad \partial_t \phi(t,a) = 0
\qquad (t \in [0,\infty))
\end{align}  
(velocity and angular velocity at the left endpoint $a$ are zero), and that the control input $u(t)$ is given by the force and the torsional moment at  the right end of the beam and the observation output $y(t)$ is given by the velocity and angular velocity at the right end of the beam, that is,
\begin{align} \label{eq:Timoshenko input/output}
u(t) = \begin{pmatrix} K(b) \big( \partial_{\zeta} w(t,b) - \phi(t,b) \big) \\ E I(b) \partial_{\zeta} \phi(t,b) \end{pmatrix},
\qquad
y(t) = \begin{pmatrix} \partial_t w(t,b) \\ \partial_t \phi(t,b) \end{pmatrix}
\end{align}
for all $t \in [0,\infty)$. With the choices
\begin{align*}
x(t)(\zeta) 
:=
\begin{pmatrix}
\partial_{\zeta} w(t,\zeta) - \phi(t,\zeta) \\
\rho(\zeta) \partial_t w(t,\zeta) \\
\partial_{\zeta} \phi(t,\zeta) \\
I_r(\zeta) \partial_t \phi(t,\zeta)
\end{pmatrix},
\qquad 
\mathcal{H}(\zeta) 
:=
\begin{pmatrix}
K(\zeta) & 0 & 0 & 0 \\
0 & 1/\rho(\zeta) & 0 & 0 \\ 
0 & 0 & EI(\zeta) & 0 \\
0 & 0 & 0 & 1/I_r(\zeta)
\end{pmatrix},
\end{align*}
and an appropriate choice of $P_1, P_0 \in \R^{4\times 4}$, the pde~\eqref{eq:Timoshenko pde} take the form~\eqref{eq:S diff eq} of a port-Hamiltonian system of order $N =1$ and, moreover, the boundary condition~\eqref{eq:Timoshenko bdry cond} and the in- and output conditions~\eqref{eq:Timoshenko input/output} take the desired form~\eqref{eq:S bdry cond} and~\eqref{eq:S input/output eq}, \eqref{eq:S bdry control/observ operators} with matrices $W_{B,1}, W_{B,2}, W_C \in \R^{2\times 8}$. 
It is straightforward to verify that this system $\mathfrak{S}$ is impedance-energy-preserving, that the matrix $W \in \R^{6\times 8}$ from~\eqref{eq:W-matrix, def} has full rank, and that $\mathcal{H}$ is absolutely continuous and~\eqref{eq:ISS, matrungl} holds true. 
In particular, Conditions~\ref{cond:S imp-passive}, \ref{cond:B-tilde surj}, \ref{cond:ISS, system} are satisfied. 
So, as soon as the controller $\mathfrak{S}_c$ is chosen such that
\begin{itemize}
\item Conditions~\ref{cond:P und R} and~\ref{cond:ISS, controller} are satisfied, or
\item Conditions~\ref{cond:P und R} and~\ref{cond:wISS, controller} are satisfied and $0$ is the only critical point of $\mathcal{P}$,
\end{itemize}
respectively, the resulting closed-loop system $\tilde{\mathfrak{S}}$ is input-to-state stable or weakly input-to-state stable, respectively (Lemma~\ref{lm:hinr bed fuer klass appr beobb-vor an S aus wISS-satz} and~\ref{lm:hinr bed fuer aequilibrpkt-vor aus wISS-satz}!).~$\blacktriangleleft$
\end{ex}

\section*{Acknowledgements}

J. Schmid gratefully acknowledges financial support by the German Research Foundation (DFG) through the research training group ``Spectral theory and dynamics of quantum systems'' (GRK 1838)  and through the grant ``Input-to-state stability and stabilization of distributed-parameter systems'' (DA 767/7-1)

\begin{small}

\end{small}

\end{document}